\theoremstyle{plain}
\newtheorem{theorem}{Theorem}[section]
\newtheorem{corollary}[theorem]{Corollary}
\newtheorem{proposition}[theorem]{Proposition}
\newtheorem{lemma}[theorem]{Lemma}
\theoremstyle{definition}
\newtheorem{definition}[theorem]{Definition}
\newtheorem{remark}[theorem]{Remark}
\newtheorem{observation}[theorem]{Observation}
\newtheorem{example}[theorem]{Example}
\newtheorem{examples}[theorem]{Examples}
\numberwithin{theorem}{section}
\numberwithin{equation}{section}
\def\R{\mathbb{R}}
\def\C{\mathbb{C}}
\def\N{\mathbb{N}}
\def\Q{\mathbb{Q}}
\newcommand{\scrS }{\mathscr{S}}
\renewcommand{\k}{\kappa}
\newcommand{\tk}{\tilde{\kappa}}
\newcommand{\Bquad}{{Q}}
\newcommand{\bquad}{{q}}
\newcommand{\eps}{\varepsilon}
\let \Re \relax
\DeclareMathOperator{\Re}{Re}
\let \Im \relax
\DeclareMathOperator{\Im}{Im}
\newcommand{\LS}{Lopatinski\u{\i}-\v{S}apiro\xspace}
\newcommand{\SbarpN}{\ensuremath{\overline{\mathscr{S}}(\RNp)}}
\newcommand{\y}{\varrho}
\newcommand{\tchi}{\tilde{\chi}}
\newcommand{\nhd}{neighborhood\xspace}
\newcommand{\U}{\mathscr U}
\newcommand{\br}{\ensuremath{_{|x_N=0^+}}}
\newcommand{\bd}{\ensuremath{_{|\partial\Omega}}}
\newcommand{\ovl}[1]{\overline{#1}}
\newcommand{\Rdp}{\R^d_+}
\newcommand{\Op}{\ensuremath{\mathrm{Op}}}
\newcommand{\Opt}{\ensuremath{\mathrm{Op}_{\mbox{\tiny ${\mathsf T}$}}}}
\newcommand{\suff}{sufficiently\xspace}
\DeclareMathOperator{\trace}{tr}
\DeclareMathOperator{\range}{Ran}
\newcommand{\transp}{\ensuremath{\phantom{}^{t}}}
\newcommand{\Norm}[2]{{\| #1 \|}_{#2}}
\newcommand{\norm}[2]{{|#1|}_{#2}}
\newcommand{\Normsc}[2]{{\| #1 \|}_{#2, \tilde{\tau}}}
\newcommand{\normsc}[2]{{| #1 |}_{#2, \tilde{\tau}}}
\newcommand{\inp}[2]{( #1, #2 )} 
\newcommand{\biginp}[2]{\big( #1, #2 \big)}
\newcommand{\Normsctau}[2]{{\| #1 \|}_{#2, \tau}}
\newcommand{\normsctau}[2]{{| #1 |}_{#2, \tau}}
\newcommand{\lscttau}{\lambda_{\mbox{\tiny ${\mathsf T}$},\tau}}
\newcommand{\Lscttau}{\Lambda_{\mbox{\tiny ${\mathsf T}$},\tau}}
\newcommand{\Ssctau}{S_\tau}
\newcommand{\Psisctau}{\Psi_\tau}
\newcommand{\Sscttau}{S_{\mbox{\tiny ${\mathsf T}$},\tau}}
\newcommand{\Psiscttau}{\Psi_{\mbox{\tiny ${\mathsf T}$},\tau}}
\newcommand{\RNp}{\R^N_+}
\newcommand{\lsc}{\lambda_{\tilde{\tau}}}
\newcommand{\lsct}{\lambda_{\mbox{\tiny ${\mathsf T}$},\tilde{\tau}}}
\newcommand{\Lsct}{\Lambda_{\mbox{\tiny ${\mathsf T}$},\tilde{\tau}}}
\newcommand{\Ssc}{S_{\tilde{\tau}}}
\newcommand{\Psisc}{\Psi_{\tilde{\tau}}}
\newcommand{\un}[1]{\Sigma_{#1}}
\newcommand{\z}{\mathbf{z}}
\newcommand{\Con}{\ensuremath{\mathscr C}}
\newcommand{\Cinf}{\ensuremath{\Con^\infty}}
\newcommand{\Cinfc}{\ensuremath{\Con^\infty_c}}
\newcommand{\Cbarc}{\ensuremath{\overline \Con^\infty}_c}
\newcommand{\Pell}{\mathsf P}
\newcommand{\scrO}{\mathscr O}
\DeclareMathOperator{\id}{Id}
\DeclareMathOperator{\supp}{supp}
\DeclareMathOperator{\rank}{rank}
\DeclareMathOperator{\Span}{span}
\title{Null-controllability for a fourth order parabolic equation under general boundary conditions}
\author{Emmanuel Wend-Benedo Zongo$^1$ and Luc Robbiano$^2$}
\date{
	$^1$Universit\'e Paris-Saclay, Laboratoire de Math\'ematiques d'Orsay (LMO), CNRS, UMR 8628, France.\\ \texttt{emmanuel.zongo361@gmail.com}\\%
	$^2$Universit\'e Versailles Saint-Quentin-en-Yvelines, Laboratoire de Math\'ematiques de Versailles (LMV), CNRS, UMR 8100, France.  \\ \texttt{luc.robbiano@uvsq.fr}\\[2ex]%
}
\begin{document}
\maketitle
\begin{abstract}
 In this paper, we consider a fourth order inner-controlled parabolic equation on an open bounded subset of $\R^d$, or a smooth compact manifold with boundary, along with general boundary operators fulfilling the \LS condition. We derive a spectral inequality for the solution of the parabolic system that yields a null-controllability result. The spectral inequality is a consequence of an interpolation inequality obtained via a Carleman inequality for the bi-Laplace operator under the considered boundary conditions.\\
\\
\noindent
Keywords: Carleman estimates; spectral inequality; \LS condition; interpolation inequality.\\
\\
AMS 2020 subject classification: 35B45; 35J30; 35J40; 74K20; 93D15.
\end{abstract}

\tableofcontents

\section{Introduction}
In this paper, our aim is to study the interior null-controllability for a fourth order parabolic equation. A kind of fourth order parabolic equations describes the epitaxial growth of nanoscale thin films which has recently received increasing interest in materials science because compositions like \text{$YBa_2Cu_3O_{7-\delta}$} (YBCO) are expected to be high-temperature super-conducting and could be used in the design of semi-conductors (see for instance \cite{KBB} and the references therein). Therefore, studying the features of fourth order parabolic equations has realistic meanings.\\
Let $\Omega$ be a bounded connected open subset in $\R^d$, or a smooth bounded connected $d$-dimensional manifold, with smooth boundary $\partial\Omega$.
We consider the following controlled parabolic equation
\begin{equation}\label{eq: parabolic}
\begin{cases}
\partial_t y +\Delta^2y =\mathbbm{1}_{\Sigma}v
~~~~\text{in}~(t,x) \in  \R_+ \times\Omega,\\
B_1 y_{|\R_+ \times\partial \Omega}= B_2y_{|\R_+ \times\partial \Omega}=0,\\
y_{|t=0}=y^0\in L^2(\Omega).
\end{cases}
\end{equation}
Here, $\Sigma$ is a nonempty open subset of $\Omega$, $B_1$ and $B_2$ denote two boundary differential operators. The function $v$ is the control and lies in $L^2((0,\infty)\times \Omega).$ It only acts on the solution $y$ in $\Sigma.$ The function $y$ may represent a scalled film height and the term $\Delta^2 y$ represents the capillarity-driven surface diffusion (see \cite{SGK}).\\
\indent The two boundary  operators $B_1$ and $B_2$ are of order $k_j$, $j=1,2$ respectively, yet at most of order $3$ in the direction
 normal to the boundary $\partial\Omega$. They 
are chosen such that the two following properties are fulfilled: 
\begin{enumerate}
  \item the \LS boundary condition holds (this condition is fully
    described in what follows);
  \item along with the homogeneous boundary conditions given above the
    bi-Laplace operator is self-adjoint and nonnegative. 
\end{enumerate}
We are concerned with the question of null-controllability for this controlled parabolic equation which is the following:
\begin{center}
For a given initial data $y^0\in L^2(\Omega),$ and a given time $T>0,$ can one find $v\in L^2((0,T)\times \Sigma)$ such $y(T)=0?$
\end{center}
The answer to this question rely on the derivation of a spectral inequality. With the second property, associated with the operator is then a Hilbert basis $(\varphi_j)_{j\in\mathbb{N}}$ of $L^2(\Omega).$ In the case of ``clamped'' boundary condition ($B_1 y_{|\partial \Omega}=y_{|\partial\Omega}$ and $B_2 y_{|\partial \Omega}=\partial_{\nu}y_{|\partial\Omega}$) the following spectral inequality was proved by J. Le Rousseau and L. Robbiano in \cite{JL}.
\begin{theorem}\textup{(spectral inequality for the ``clamped'' bi-Laplace operator)}.\\
Let $\varnothing$ be an open subset of $\Omega$. There exists $C>0$ such that 
$$\|y\|_{L^2(\Omega)}\leq C e^{C\mu^{1/4}}\|y\|_{L^2(\varnothing)},~~\mu>0,~~
y\in\Span\{\varphi_j;~\mu_j\leq \mu\},~~~\text{with}~~\Delta^2\varphi_j=\mu_j\varphi_j.$$
\end{theorem}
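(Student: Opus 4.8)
The plan is to follow the now-standard Lebeau--Robbiano strategy for spectral inequalities, reducing the estimate to a local interpolation inequality for an elliptic operator in one extra variable, and then to a Carleman estimate adapted to the boundary conditions $B_1, B_2$. First I would introduce the auxiliary variable $s \in (0, S_0)$ and set $z(s,x) = \sum_{\mu_j \le \mu} a_j \frac{\sinh(\mu_j^{1/4} s)}{\mu_j^{1/4}} \varphi_j(x)$ for $y = \sum_{\mu_j \le \mu} a_j \varphi_j$. The point of the fourth-root scaling is that $z$ solves an elliptic equation: writing $\Pell = \Delta^2 + \partial_s^4$ (or the relevant factorization), one checks that $(\partial_s^2 + \Delta^2)$-type combinations annihilate $z$ on $(0,S_0) \times \Omega$ up to harmless lower-order terms, and $z$ inherits the boundary conditions $B_1 z = B_2 z = 0$ on $(0,S_0) \times \partial\Omega$ from the eigenfunctions, together with $z_{|s=0} = 0$ and $\partial_s z_{|s=0} = y$. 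Thus controlling $y$ in $L^2(\varnothing)$ is recast as controlling a trace of $z$.

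The second step is to prove an interpolation inequality of the form
\[
\Norm{z}{H^1((a,b)\times\Omega)} \le C \, \Norm{z}{H^2((0,S_0)\times\Omega)}^{1-\delta}\Bigl(\Norm{z}{H^1((0,S_0)\times\varnothing)} + \Norm{z}{H^{3}((0,S_0)\times\Omega)}\text{-type boundary terms}\Bigr)^{\delta}
\]
for a fixed interval $(a,b) \Subset (0,S_0)$ and some $\delta \in (0,1)$. This is where the work lies: one covers $\ovl\Omega$ by coordinate patches, and in each patch away from $\partial\Omega$ one uses the classical elliptic Carleman estimate for the fourth-order operator $\Delta^2 + \partial_s^4$ (as in \cite{JL}), while in the patches meeting $\partial\Omega$ one must use a Carleman estimate for the bi-Laplace-type operator \emph{up to the boundary} under the \LS condition satisfied by $(B_1,B_2)$. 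The latter Carleman estimate with boundary observation of $B_1 z, B_2 z$ (which vanish here) is precisely the ingredient the paper announces it will establish; I would invoke it, choosing the weight function $\varphi$ with non-vanishing gradient, sub-level sets foliating $\ovl\Omega$, and $\partial_\nu\varphi$ of the correct sign where needed so that the \LS condition guarantees the boundary terms are absorbed. Patching the local estimates via a standard propagation-of-smallness / chain-of-balls argument across $\ovl\Omega$ and along $s$ yields the global interpolation inequality.

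The third step converts the interpolation inequality into the spectral inequality by the usual optimization. Using $\partial_s z_{|s=0} = y$ and $z_{|s=0}=0$, a trace estimate gives $\Norm{y}{L^2(\Omega)} \lesssim \Norm{z}{H^2((a,b)\times\Omega)}$ for a suitable $(a,b)$; meanwhile the explicit $\sinh$ representation gives the \emph{a priori} bounds $\Norm{z}{H^k((0,S_0)\times\Omega)} \le C e^{C\mu^{1/4}} \Norm{y}{L^2(\Omega)}$ and $\Norm{z}{H^1((0,S_0)\times\varnothing)} \le C e^{C\mu^{1/4}} \Norm{y}{L^2(\varnothing)}$, the exponential coming from $\sinh(\mu_j^{1/4}S_0)$ with $\mu_j \le \mu$. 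Feeding these into the interpolation inequality, $\Norm{y}{L^2(\Omega)} \le C\bigl(e^{C\mu^{1/4}}\Norm{y}{L^2(\Omega)}\bigr)^{1-\delta}\bigl(e^{C\mu^{1/4}}\Norm{y}{L^2(\varnothing)}\bigr)^{\delta}$; raising to the power $1/\delta$ and absorbing $\Norm{y}{L^2(\Omega)}^{(1-\delta)/\delta}$ on the left gives $\Norm{y}{L^2(\Omega)} \le C e^{C\mu^{1/4}}\Norm{y}{L^2(\varnothing)}$, which is the claim.

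The main obstacle is unquestionably the boundary Carleman estimate: unlike the clamped case of \cite{JL}, here $B_1, B_2$ are only assumed to satisfy the \LS condition and to make $\Delta^2$ self-adjoint nonnegative, so one cannot simply quote a known inequality. One must verify that the \LS condition for the conjugated fourth-order operator is stable under the Carleman weight (for $\tau$ large and the weight's gradient controlled), derive the corresponding microlocal estimate by factorizing the conjugated symbol and treating the boundary as a transmission/\LS problem for a first-order system, and check that the choice of $\varphi$ near $\partial\Omega$ is compatible simultaneously with the \LS condition and with the foliation needed for propagation. A secondary technical point is bookkeeping the various Sobolev exponents so that the elliptic regularity used to pass from the $H^1$ estimate on $z$ back to an $L^2$ bound on $y$ is consistent with the orders $k_j$ of $B_1,B_2$.
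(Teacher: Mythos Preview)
Your overall architecture --- add an auxiliary variable $s$, build a function annihilated by the augmented elliptic operator, prove an interpolation inequality via boundary Carleman estimates under the \LS condition, then optimize --- is exactly the paper's route (and that of \cite{JL}). The discussion of the boundary Carleman estimate and the \LS stability under conjugation is also in line with what the paper does.

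There is, however, a genuine gap in your ansatz. The augmented operator is $Q=D_s^4+\Delta^2=\partial_s^4+\Delta^2$, and your choice $z(s,x)=\sum_{\mu_j\le\mu}a_j\,\mu_j^{-1/4}\sinh(\mu_j^{1/4}s)\,\varphi_j(x)$ does \emph{not} satisfy $Qz=0$: since $\partial_s^4\sinh(\mu_j^{1/4}s)=\mu_j\sinh(\mu_j^{1/4}s)$ and $\Delta^2\varphi_j=\mu_j\varphi_j$, one gets $Qz=2\sum a_j\mu_j^{3/4}\sinh(\mu_j^{1/4}s)\varphi_j\neq 0$. The $\sinh$ trick is tailored to the second-order situation $-\partial_s^2+(-\Delta)$; here you need a profile $f$ solving $f^{(4)}=-f$. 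The paper takes
\[
u(s,x)=\sum_{\mu_j\le\mu}\alpha_j\,\mu_j^{-3/4}f(s\mu_j^{1/4})\,\phi_j(x),\qquad f(s)=\beta\sin(\beta s)\cosh(\beta s)-\beta\cos(\beta s)\sinh(\beta s),\ \ \beta=\tfrac{\sqrt2}{2},
\]
which indeed gives $Qu=0$ and satisfies $f(0)=f'(0)=f''(0)=0$, $f^{(3)}(0)=1$, so that $\partial_s^3 u_{|s=0}=y$ (not $\partial_s u_{|s=0}$). This also changes the Sobolev bookkeeping in your third step: the paper works with $\|u\|_{H^3}$ rather than $H^1$ or $H^2$, recovers $y$ from the third $s$-trace, and uses a short lemma bounding $\int_{at}^{bt}f(\beta^{-1}r)^2\,dr$ from below to get the lower bound $\|u\|_{L^2(Y)}\gtrsim\mu^{-7/4}\|y\|_{L^2(\Omega)}$. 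Once you replace $\sinh$ by this $f$ and adjust the trace and norm indices accordingly, the rest of your outline goes through as written.
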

\noindent 
The proof of this theorem is based on a Carleman inequality for the augmented fourth-order elliptic operator 
$D^4_s+\Delta^2$, that is, after the addition of a variable $s.$ We are interested in proving such a spectral 
inequality for general boundary condition, that is, the operator $\Delta^2$ along with the general boundary 
operators $B_1$ and $B_2$ satisfy the \LS condition. 
By considering for instance, the boundary operators $B_1(x,D)=D_d^2+\alpha R(x,D')$ and $B_2(x,D)=D_d$, 
where $D_d=-i\partial_d,$ $\alpha\in\R$ and $R(x,D')$ a tangential elliptic operator of order $2,$ we remark 
that having the \LS condition for $(\Delta^2,B_1,B_2)$ does not imply that the \LS condition holds true 
$(D^4_s+\Delta^2, B_1, B_2).$\\
\\
With this approach, some boundary conditions that satisfy the \LS condition cannot be used to prove the Carleman inequality required to obtain the spectral inequality. However, some boundary conditions allow this, for instance the clamped boundary condition. So, we have determined the boundary conditions allowing to prove such Carleman inequality and thus the spectral inequality.\\
\indent Our first goal is thus the derivation of the Carleman inequality for
the operator $Q=D^4_s+\Delta^2$ near the boundary under the boundary conditions given by $B_1$ and $B_2$.

Then, from the Carleman estimate one deduces an interpolation inequality
for the operator $Q$ in the case of the prescribed boundary
conditions.  The spectral inequality then follows from this interpolation inequality. 

Finally as an application of the spectral inequality one deduces a null-controllability result. The method we used is called the Lebeau-Robbiano method which is a quantitative unique continuation property for the sum of eigenfunctions of the operator $\Delta^2.$
Null-controllability of a fourth order parabolic equation in dimension greater or equal to two was studied in \cite{SGK, HY} under the boundary conditions $B_1 y_{|\partial \Omega}=y_{|\partial\Omega}$ and $B_2 y_{|\partial \Omega}=\Delta y_{|\partial\Omega},$ see also the references therein for some interesting results concerning the null-controllability of this kind of equations. In \cite{SGK}, the authors proved the null-controllability of system (\ref{eq: parabolic}) (with homogeneous Dirichlet boundary conditions on the solution and the laplacian of the solution), by establishing an observability inequality for the non-homogeneous adjoint system associated to (\ref{eq: parabolic}), obtained from a Carleman estimate. The null interior controllability for a fourth order parabolic equation under the boundary conditions $B_1 y_{|\partial \Omega}=y_{|\partial\Omega}$ and $B_2 y_{|\partial \Omega}=\Delta y_{|\partial\Omega},$ obtained in \cite{HY} is based on Lebeau-Robbiano method.

\subsection{On Carleman estimate for higher-order elliptic operators}
If $B$ is an elliptic operator of even order $k,$ and $\varphi$ is a Carleman weight function such that the couple $(B,\varphi)$ satisfies the so-called sub-ellipticity condition, then a Carleman estimate can be obtained, even at the boundary, for instance from the results of \cite{ML}.\\
If $k\geq 4, $ it is however quite natural not to have sub-ellipticity, in particular if $B$ is a product, say $B=B_1B_2.$ Denote by $b, b_1$ and $b_2$ the principal symbols of $B, B_1$ and $B_2$ respectively. The conjugated operator $B_\varphi=e^{\tau\varphi}Be^{-\tau\varphi}$ reads $B_\varphi=B_{1,\varphi}B_{2,\varphi}$ with $B_{\ell,\varphi}=e^{\tau\varphi}B_\ell e^{-\tau\varphi}$ for $\ell=1,2.$ If the semi-classical characteristic set $Char(B_{1,\varphi})\cap Char(B_{2,\varphi})\neq \emptyset,$ then the sub-ellipticity property fails to holds. We recall that the semi-classical characteristic set of a differential operator $B$ with principal symbol $b(\y)$ is defined as
$$Char(B)=\{~\y=(z,\zeta,\tau)~:~(\zeta,\tau)\neq (0,0)~;~b(\y)=0\}.$$
In fact, if $b_\varphi, b_{1,\varphi}$ and $b_{2,\varphi}$ are respectively the semi-classical symbols of $B_\varphi, B_{1,\varphi}$ and $B_{2,\varphi},$ that is,
$$b_\varphi=b(z,\zeta+i\tau d\varphi(z))~~\text{and}~~b_{\ell,\varphi}=b_\ell(z,\zeta+i\tau d\varphi(z)),~\ell=1,2,$$ we can write
$$\{\ovl b_\varphi, b_\varphi\}=\{\ovl{b_{1,\varphi}b_{2,\varphi}}, b_{1,\varphi}b_{2,\varphi}\}=|b_{1,\varphi}|^2\{\ovl b_{2,\varphi}, b_{2,\varphi} \}+|b_{2,\varphi}|^2 \{\ovl b_{1,\varphi}, b_{1,\varphi} \}+f|b_{1,\varphi}||b_{2,\varphi}|$$
for some function $f$ and where $\{\cdot,\cdot\}$ denotes the Poisson bracket.\\
\indent Hence $\{\ovl b_\varphi, b_\varphi\}$ vanishes if $b_{1,\varphi}=b_{2,\varphi}=0$ and thus the sub-ellipticity property cannot hold for $B.$ Observe that in this example, we have $d_{z,\zeta}b(z,\zeta+i\tau d\varphi(z))=0$ if $b_1(z,\zeta+i\tau d\varphi(z))=b_2(z,\zeta+i\tau d\varphi(z))=0.$ In such cases of symbol ``flatness", the Carleman estimate we can derive for $B$ exhibits at least a loss of one full derivative.
\subsection{Geometrical setting}
On $\Omega$ we consider a Riemannian metric $\mathsf{g}_x = (\mathsf{g}_{ij}(x))$, with
associated cometric $(\mathsf{g}^{ij}(x))  = (\mathsf{g}_x)^{-1}$.
It stands as a bilinear form that act on vector fields,
\begin{align*}
  \mathsf{g}_x (u_x, v_x) = \mathsf{g}_{ij}(x) u_x^i v_x^j,
  \quad u_x = u_x^i \partial_{x_i}, \
  v_x = v_x^i \partial_{x_i}.
\end{align*}

\medskip
For $x \in \partial \Omega$ we denote by $\nu_x$ the unit outward
pointing {\em normal vector} at $x$, unitary in the sense of the
metric $\mathsf{g}$, that is
\begin{align*}
  \mathsf{g}_x(\nu_x, \nu_x)=1
  \ \ \text{and} \ \
  \mathsf{g}_x(\nu_x, u_x)=0 \quad \forall u_x \in T_x \partial \Omega.
  \end{align*}
We denote by $\partial_\nu$ the associated derivative at the boundary,
that is, $\partial_\nu f (x) = \nu_x (f)$. 
We also denote by $n_x$ the unit outward
pointing {\em conormal vector} at $x$, that is,
$n_x = \nu_x^\flat$, that is, $(n_x)_i = \mathsf{g}_{i j} \nu_x^j$. 

Near a boundary point, we shall often use normal geodesic coordinates
where $\Omega$ is locally given by $\{ x_d >0\}$ and the  
metric $\mathsf{g}$ takes the form
\begin{align*}
  \mathsf{g}=dx^d\otimes dx^d
  +\sum\limits_{1\leq i,j\leq d-1} \mathsf{g}_{ij} dx^i\otimes  dx^j.
\end{align*}
Then, the vector field $\nu_x$ is locally given by $(0, \dots, 0,
-1)$. The same for the  one form $n_x$.  

Normal geodesic coordinates allow us to locally formulate 
boundary problems in a half-space  geometry. We write 
$$\Rdp:=\{ x\in\R^d,\ x_d>0\}\qquad \text{where}\ 
x=(x',x_d) \ \text{with}\ x'\in\R^{d-1}, x_d\in\R.$$ 
We shall naturally denote its closure by $\ovl{\Rdp}$, that is,
$\ovl{\Rdp} = \{ x\in \R^d; x_d\geq 0\}$.

The Laplace-Beltrami operator
is given by
\begin{align*}
\label{eq: laplace-Beltrami}
  (\Delta_\mathsf{g} f) (x) =(\det \mathsf{g}_x)^{-1/2} 
  \sum_{1\leq i, j\leq d} \partial_{x_i}\big(
  (\det \mathsf{g}_x)^{1/2} \mathsf{g}^{i j}(x)
  \partial_{x_j}  f \big)(x).
\end{align*}
in local coordinates. Its principal part is given by
$\sum\limits_{1\leq i, j\leq d} \mathsf{g}^{i j}(x) \partial_{x_i} \partial_{x_j}$ and
its principal symbol by $\sum\limits_{1\leq i, j\leq d} \mathsf{g}^{i j}(x) \xi_i
\xi_j$.
The bi-Laplace operator is $P= \Delta^2_\mathsf{g}$.\\ In the main text of the
article we shall write $\Delta$, $\Delta^2$ in place of $\Delta_\mathsf{g}$, $\Delta_\mathsf{g}^2$.
%
%
\subsection{Main results}
We state the main Carleman estimate for the operator $Q=D^4_s+\Delta^2$ in normal geodesic coordinates as presented in Section \ref{normal-geodesic}. We note that $V$ is an open bounded neighborhood of $z_0 = (s_0, x_0) \in \R^{N}$, with $N=d+1$.
\begin{theorem}
	\label{th: ineq Carleman}
Let $Q=\Delta^2+D_s^4$ and $(s^0,x^0) \in (0,S_0)\times\partial \Omega$, with
  $\Omega$ locally given by $\{ x_d>0\}$ and $S_0>0$. Assume that $(Q,B_1,B_2,\varphi)$ satisfies the \LS condition of Definition \ref{def: LS after conjugation} at $\y'=(s,x,\sigma,\xi',\tau,\gamma,\eps)$ for all $(\sigma,\xi',\tau,\gamma,\eps)\in\R\times\R^{d-1}\times (0,+\infty)\times [1,+\infty)\times(0,1].$
  Let $\varphi(z)=\varphi_{\gamma,\eps}(z)$ be define as in section \ref{SB}. There exists an open neighborhood $W$ of $z_0$ in $(0,S_0)\times\R^d$, $W\subset V$, and there exist $\tau_0\geq \tau_*,\gamma\geq 1,\eps_0\in (0,1]$ such that
$$\gamma\Normsc{\tilde{\tau}^{-1}e^{\tau\varphi}u}{4,0}+\normsc{\trace(e^{\tau\varphi}u)}{3,1/2} \lesssim \Norm{e^{\tau\varphi}Qu}{+}+\sum\limits_{j=1}^{2}
\normsc{e^{\tau\varphi}B_{j}u\br}{7/2-k_{j}}, $$
for $\tau\geq \tau_0,$ $\gamma\geq \gamma_0,$ $\eps\geq \eps_0,$ and  $u\in\Cbarc(W_+).$
\end{theorem}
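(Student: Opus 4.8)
\medskip
\noindent\textbf{Proof strategy.}
The starting point is the usual conjugation. With $v=e^{\tau\varphi}u$, $Q_\varphi=e^{\tau\varphi}Qe^{-\tau\varphi}$ and $B_{j,\varphi}=e^{\tau\varphi}B_{j}e^{-\tau\varphi}$, the asserted inequality is equivalent, after absorbing at the end terms of lower order in $\tilde\tau$, to the semiclassical estimate
\[
\gamma\Normsc{\tilde\tau^{-1}v}{4,0}+\normsc{\trace(v)}{3,1/2}\lesssim\Norm{Q_\varphi v}{+}+\sum_{j=1}^{2}\normsc{B_{j,\varphi}v\br}{7/2-k_{j}},\qquad v\in\Cbarc(W_+),
\]
with thresholds $\tau_0,\gamma_0,\eps_0$ to be fixed, in the half-space $\{x_d>0\}$. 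From here on everything is phrased in the tangential semiclassical calculus, with large parameter $\tilde\tau$ and $\sigma,\gamma,\eps$ entering the symbol classes; since $v$ is compactly supported in $\overline{W_+}$ no condition need be imposed as $x_d\to+\infty$. Finally, $Q$ factors as $Q=\Pell_1\Pell_2$ with $\Pell_1,\Pell_2$ second-order elliptic operators (for instance $\Pell_1=D_s^2+i\Delta$, $\Pell_2=D_s^2-i\Delta$), so $Q_\varphi=\Pell_{1,\varphi}\Pell_{2,\varphi}$ and, as recalled in the Introduction, $\{\overline{q_\varphi},q_\varphi\}$ vanishes on $Char(\Pell_{1,\varphi})\cap Char(\Pell_{2,\varphi})$: subellipticity fails there, which forces the loss of one derivative appearing on the left-hand side.

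\medskip
The second step is a microlocal partition of the tangential cotangent variables. Viewing the conjugated principal symbol $q_\varphi(z,\zeta,\tau)$ as a degree-four polynomial in the normal frequency $\xi_d$, one fixes a tangential pseudodifferential partition of unity $\id=\sum_\ell\Psi_\ell$ subordinate to a finite cover by two kinds of regions, uniform for $(\sigma,\xi',\tau,\gamma,\eps)$ in the relevant ranges: an \emph{elliptic} region, where $Q_\varphi$ is elliptic with the large parameter; and a \emph{boundary} region, a conic neighbourhood of $Char(\Pell_{1,\varphi})\cap Char(\Pell_{2,\varphi})$, on which $q_\varphi$ admits a factorization $q_\varphi=q_\varphi^{-}q_\varphi^{+}$ into monic polynomials in $\xi_d$ carrying the roots with, respectively, strictly negative and non-negative imaginary part, with $q_\varphi^{+}$ of degree two. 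That $q_\varphi^{+}$ has degree exactly two — no third root migrating into the closed upper half-plane — and that the factors $\Pell_{1,\varphi},\Pell_{2,\varphi}$ are pseudoconvex is precisely where the weight $\varphi_{\gamma,\eps}$ of Section \ref{SB} enters, for $\gamma$ large enough and $\eps$ suitably restricted. It is then enough to prove the estimate with $\Psi_\ell v$ in place of $v$ on each region and to sum, the commutator errors being of lower order and absorbed at the end.

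\medskip
On an elliptic region a semiclassical parametrix for $Q_\varphi$ gives $\Normsc{\Psi_\ell v}{4,0}\lesssim\Norm{Q_\varphi v}{+}+\Normsc{v}{3,0}$ together with control of the traces up to $x_d=0$, with no loss and no boundary term, so the factor $\gamma\tilde\tau^{-1}$ on the left is comfortably absorbed. The heart of the matter is the boundary region. There one reduces $Q_\varphi v=f$, microlocally, to two coupled problems. First, an estimate for the factor $\Op(q_\varphi^{-})$, whose roots lie strictly in the lower half-plane: the associated half-space problem needs no boundary data and is lossless, so $w:=\Op(q_\varphi^{+})v$ and its traces are controlled by $\Norm{f}{+}$ up to lower-order terms. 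Second, a boundary value problem for $\Op(q_\varphi^{+})v=w$ on $\{x_d>0\}$ together with $B_{j,\varphi}v\br=g_j$; its well-posedness is exactly the content of the \LS condition of Definition \ref{def: LS after conjugation}, assumed at every $(\sigma,\xi',\tau,\gamma,\eps)$ in the stated ranges — namely that the images of $b_{1,\varphi},b_{2,\varphi}$ in $\C[\xi_d]/(q_\varphi^{+})$ form a basis — which, through the associated frozen ordinary differential equation on $\R_+$, yields the lossless trace control
\[
\normsc{\trace(\Psi_\ell v)}{3,1/2}\lesssim\Norm{Q_\varphi v}{+}+\sum_{j=1}^{2}\normsc{B_{j,\varphi}v\br}{7/2-k_{j}}+\Normsc{v}{3,0}.
\]
Because one root of $q_\varphi^{+}$ may lie within $O(\tilde\tau^{-1})$ of the real axis, recovering $v$ itself from $w=\Op(q_\varphi^{+})v$ costs one full derivative; this produces the weight $\tilde\tau^{-1}$, and assembling the two factors gives, on the boundary region,
\[
\gamma\Normsc{\tilde\tau^{-1}\Psi_\ell v}{4,0}+\normsc{\trace(\Psi_\ell v)}{3,1/2}\lesssim\Norm{Q_\varphi v}{+}+\sum_{j=1}^{2}\normsc{B_{j,\varphi}v\br}{7/2-k_{j}}+\tilde\tau^{-1}\Normsc{v}{4,0}+\Normsc{v}{3,0}.
\]

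\medskip
Summing over $\ell$, choosing $\tau_0$ large and $\gamma_0,\eps_0$ appropriately so that the remainders $\tilde\tau^{-1}\Normsc{v}{4,0}+\Normsc{v}{3,0}$ are absorbed into the left-hand side, and undoing the conjugation — the boundary factor $e^{\tau\varphi}$ being bounded above and below on $\overline{W}$ — yields the theorem. The main obstacle, anticipated by the symbol-flatness discussion in the Introduction, lies entirely in the boundary region: justifying the factorization $q_\varphi=q_\varphi^{-}q_\varphi^{+}$ and the attached one-sided parametrix when a characteristic is double and touches the real axis, checking that the weight of Section \ref{SB} keeps exactly two roots in the closed upper half-plane and is pseudoconvex for each second-order factor, and propagating the \LS condition through the frozen boundary problem so as to reach the trace estimate with the correct — and, given the flatness, unavoidable — power of $\tilde\tau$.
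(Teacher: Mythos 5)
Your sketch takes a genuinely different route from the paper, but it has a concrete gap in the key step. The paper does not re-prove the interior Carleman estimate by a microlocal partition: it cites Proposition~\ref{carl-fact} (from \cite{JL}), which already gives
\[
\gamma\Normsc{\tilde\tau^{-1}v}{4,0}+\normsc{\trace(v)}{3,1/2}\lesssim\Norm{Q_\varphi v}{+}+\normsc{\trace(v)}{1,5/2},
\]
with no Lopatinski\u{\i}--\v{S}apiro hypothesis, and then the new content is Proposition~\ref{prop: local boundary estimate}, which uses the \LS condition to control \emph{all} traces $\normsc{\trace(v)}{3,1/2}$ by $\Norm{Q_\varphi v}{+}$, the $B_{j,\varphi}$ data, and $\Normsc{v}{4,-1}$; the theorem then follows by plugging one into the other and absorbing residuals with $\gamma$ large. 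Your second bullet (the $q_\varphi^{-}$/$q_\varphi^{+}$ split, perfect elliptic estimate for the lower-half-plane factor, and then the \LS condition for the remaining boundary value problem) is indeed the skeleton of the paper's proof of Proposition~\ref{prop: local boundary estimate}, though the paper realizes it through a boundary quadratic form, the positivity Lemma~\ref{lemma: postivity boundary form}, and a G\aa rding inequality rather than a ``frozen ODE'' argument.

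The gap concerns the interior term $\gamma\Normsc{\tilde\tau^{-1}v}{4,0}$. You first (correctly) attribute the loss of one power of $\tilde\tau$ to the vanishing of $\{\overline{q_\varphi},q_\varphi\}$ on $Char(P_{1,\varphi})\cap Char(P_{2,\varphi})$, which is a failure of sub-ellipticity and an \emph{interior} phenomenon. But in the boundary region you then derive the loss by a different and incorrect mechanism, namely that ``recovering $v$ from $w=\Op(q_\varphi^{+})v$ costs one full derivative'' because a root of $q_\varphi^{+}$ can be $O(\tilde\tau^{-1})$ from the real axis. When roots of $q_\varphi^{+}$ touch the real axis the operator $\Op(q_\varphi^{+})$ is not microlocally elliptic at all, so there is no parametrix that loses merely one derivative: you cannot ``recover $v$'' from $w$ there, with or without a loss, and no $\gamma$ factor can appear from this step. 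The actual interior estimate needs a commutator/sub-ellipticity argument on the individual second-order factors $P_{j,\varphi}$ together with a finer microlocal partition (the paper notes that \cite{JL} uses three regions, not two), and it is this argument that produces the $\gamma$ and the single-power-of-$\tilde\tau$ loss. As written, the elliptic region gives a lossless estimate and the boundary region supplies no mechanism for the interior term on the left-hand side, so your decomposition does not actually yield the claimed inequality.
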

\smallskip
The general boundary differential operators $B_1$ and $B_2$ are given by
\begin{equation*}
  B_\ell(x,D)
  = \sum\limits_{0\leq j\leq\min(3,k_\ell)}
  B_\ell^{k_\ell-j}(x,D') (i \partial_\nu) ^j,
  \qquad \ell=1,2,
\end{equation*}
with $B_\ell^{k_\ell-j}(x,D')$ differential operators acting in the
tangential variables. 
\noindent Let $(\Pell, D(\Pell))$ be the unbounded operator on $L^2(\Omega),$ with
\begin{align}
  D(\Pell) = \big\{ u \in H^4(\Omega); \ B_1
  u_{|\partial \Omega}
  = B_2 u_{|\partial \Omega}=0\big\},
\end{align}
and given by $\Pell u = \Delta^2 u$ for $u \in D(\Pell)$.\\
\\
\indent The following spectral inequality quantifies how linear combinations of the eigenfunctions of $\Pell$ can be observed from a subdomain.
\begin{theorem}\textup{(spectral inequality)} \label{spectineg}.   
\\
Let $P$ and $B_1, B_2$ be such that the \LS condition holds on $\partial\Omega$ and $(\Pell,D(\Pell))$ is self-adjoint. 
We assume furthermore that $Q=D_s^4+\Delta^2$ and $B_1, B_2$ satisfy the \LS on 
$ (0,S_0)\times \partial\Omega$.
Let $(\phi_j)_{j\in\N}$ be a Hilbert basis of $L^2(\Omega)$ made of eigenfunctions of $(\Pell,D(\Pell))$ associated with the sequence $\mu_0 \leq \mu_1 \leq \cdots \leq \mu_k \leq \cdots$ of eigenvalues.
Let $\omega$ be an open set in $\Omega$. There exists $K>0$ such that for all $\mu\geq\max\{\mu_0,0\}$ one has
\begin{equation}\label{SINE}
    \|y\|_{L^2(\Omega)}\leq K e^{K\mu^{1/4}}\|y\|_{L^2(\omega)},~~
y\in\Span\{\phi_j;~\mu_j\leq \mu\}.
\end{equation}
\end{theorem}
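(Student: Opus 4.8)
The plan is to follow the Lebeau–Robbiano strategy: first upgrade the boundary Carleman estimate of Theorem \ref{th: ineq Carleman} to a local interpolation inequality near the boundary, patch it together with the corresponding interior interpolation inequality (which follows from a classical Carleman estimate for $Q = D_s^4 + \Delta^2$ away from $\partial\Omega$, since in the interior the sub-ellipticity issue is handled by known higher-order results) into a global interpolation inequality on $(0,S_0)\times\Omega$, and finally feed a sum of eigenfunctions into this inequality to extract the claimed bound $\|y\|_{L^2(\Omega)} \le K e^{K\mu^{1/4}} \|y\|_{L^2(\omega)}$. Throughout, $N = d+1$ and $z = (s,x)$, and the extra variable $s$ plays the role of the "elliptic" variable that turns the parabolic spectral problem into an elliptic unique-continuation problem.

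First I would record the interpolation inequality. Set $Y(s,x) = \sum_{\mu_j \le \mu} \frac{\sinh(\mu_j^{1/4} s)}{\mu_j^{1/4}} c_j \phi_j(x)$ (with the obvious modification $s\, c_j \phi_j$ when $\mu_j = 0$), where $y = \sum_{\mu_j\le\mu} c_j \phi_j$. Then $Q Y = (D_s^4 + \Delta^2) Y = 0$ in $(0,S_0)\times\Omega$ because each term satisfies $\partial_s^4 \frac{\sinh(\mu_j^{1/4}s)}{\mu_j^{1/4}} = \mu_j \frac{\sinh(\mu_j^{1/4}s)}{\mu_j^{1/4}}$ and $\Delta^2 \phi_j = \mu_j \phi_j$, and moreover $Y$ satisfies the homogeneous boundary conditions $B_1 Y_{|\partial\Omega} = B_2 Y_{|\partial\Omega} = 0$ for every $s$ since each $\phi_j \in D(\Pell)$. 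One also has $\partial_s Y_{|s=0} = y$ and $Y_{|s=0} = 0$. The interpolation inequality asserts: for a suitable open set $\mathscr O \Subset (0,S_0)\times\Omega$ meeting a neighborhood of $\{0\}\times\omega$, and with $\delta \in (0,1)$,
\begin{equation*}
\|Y\|_{H^1((\delta,S_0-\delta)\times\Omega)} \lesssim \big( \|Y\|_{H^1(\mathscr O)} + \|QY\|_{L^2((0,S_0)\times\Omega)} + \|B_1 Y_{|\partial\Omega}\|_{\cdots} + \|B_2 Y_{|\partial\Omega}\|_{\cdots}\big)^{\theta}\,\|Y\|_{H^1((0,S_0)\times\Omega)}^{1-\theta}
\end{equation*}
for some $\theta\in(0,1)$. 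To prove this I would use a standard partition-of-unity / propagation-of-smallness argument along a chain of balls connecting $\mathscr O$ to any point of $\Omega$: in each ball one applies either the interior Carleman estimate or, for balls touching $\partial\Omega$, Theorem \ref{th: ineq Carleman} with an appropriately chosen weight $\varphi_{\gamma,\eps}$ whose level sets are transverse to the relevant region; optimizing over the large parameter $\tau$ converts each local Carleman estimate into a three-ball / interpolation inequality, and composing them propagates the estimate up to the boundary and across $\Omega$. This is exactly the mechanism by which Theorem \ref{th: ineq Carleman} (the boundary piece) and the interior estimate combine; the elliptic operator $Q$ has no characteristics once $(\sigma,\tau)\ne 0$ so the propagation reaches all of $\{\delta < s < S_0-\delta\}\times\Omega$ including boundary points.

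Next I would convert the interpolation inequality into the spectral inequality. Plugging in our specific $Y$ and using $QY = 0$, $B_\ell Y_{|\partial\Omega} = 0$, the right-hand side collapses to $\|Y\|_{H^1(\mathscr O)}^{\theta} \|Y\|_{H^1((0,S_0)\times\Omega)}^{1-\theta}$. The elementary bounds $\frac{\sinh(\mu_j^{1/4}s)}{\mu_j^{1/4}} \le C e^{C\mu^{1/4}}$ and $\cosh(\mu_j^{1/4}s) \le C e^{C\mu^{1/4}}$ for $0\le s\le S_0$, together with orthonormality of $(\phi_j)$ in $L^2(\Omega)$ and elliptic regularity $\|\phi_j\|_{H^1(\Omega)} \lesssim \mu_j^{1/4}\|\phi_j\|_{L^2(\Omega)}$, give $\|Y\|_{H^1((0,S_0)\times\Omega)} \le C e^{C\mu^{1/4}} \|y\|_{L^2(\Omega)}$. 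For the left-hand side, since $Y_{|s=0} = 0$ and $\partial_s Y_{|s=0} = y$, a one-dimensional Taylor estimate on each Fourier mode (or a trace/Poincaré argument in $s$ near $0$) yields $\|y\|_{L^2(\Omega)} = \|\partial_s Y_{|s=0}\|_{L^2(\Omega)} \lesssim \|Y\|_{H^1((\delta_0,S_0-\delta_0)\times\Omega)} \cdot e^{C\mu^{1/4}}$ for a fixed small $\delta_0$ — here the exponential factor comes from controlling the $s$-derivatives of $\sinh$ away from $s=0$; more cleanly, one chooses $\delta_0$ and uses that on any fixed $s$-interval the modes with $\mu_j \le \mu$ have $\|\partial_s Y(s,\cdot)\|_{L^2(\Omega)}$ comparable to $\|y\|_{L^2(\Omega)}$ up to $e^{C\mu^{1/4}}$ in both directions, whence $\|y\|_{L^2(\Omega)} \le C e^{C\mu^{1/4}}\|Y\|_{H^1((\delta,S_0-\delta)\times\Omega)}$. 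Finally, for the observation term, $\mathscr O$ is chosen so that its slices at small $s$ lie in $\omega$; a Caccioppoli/elliptic-regularity estimate for the zero-solution $Y$ of $QY=0$ bounds $\|Y\|_{H^1(\mathscr O)}$ by $\|Y\|_{L^2(\mathscr O')}$ on a slightly larger $\mathscr O'$, and then the same mode-by-mode comparison gives $\|Y\|_{L^2(\mathscr O')} \le C e^{C\mu^{1/4}} \|y\|_{L^2(\omega)}$. Combining,
\begin{equation*}
\|y\|_{L^2(\Omega)} \le C e^{C\mu^{1/4}} \big( e^{C\mu^{1/4}}\|y\|_{L^2(\omega)}\big)^{\theta} \big( e^{C\mu^{1/4}}\|y\|_{L^2(\Omega)}\big)^{1-\theta},
\end{equation*}
and a Young-inequality / absorption step ($ab \le \eta a^{1/(1-\theta)} + C_\eta b^{1/\theta}$, absorbing the $\|y\|_{L^2(\Omega)}$ term) produces $\|y\|_{L^2(\Omega)} \le K e^{K\mu^{1/4}}\|y\|_{L^2(\omega)}$, which is \eqref{SINE}.

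The main obstacle is the first step: transferring the single-scale boundary Carleman estimate of Theorem \ref{th: ineq Carleman} into a genuine interpolation inequality that propagates up to and along the boundary. The difficulty is twofold. First, Theorem \ref{th: ineq Carleman} carries a loss of derivatives (the factor $\gamma \tilde\tau^{-1}$ on the volume term), reflecting the symbol "flatness" of $Q = B_{1,\varphi}B_{2,\varphi}$ discussed in the introduction; one must check that this loss is harmless for the propagation argument (it only weakens the exponent $\theta$, not the qualitative conclusion) and that the weighted norms $\Normsc{\cdot}{4,0}$, $\normsc{\trace(\cdot)}{3,1/2}$ correctly dominate the plain $H^1$ norms needed downstream. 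Second, the weight $\varphi_{\gamma,\eps}$ from Section \ref{SB} and the \LS hypothesis must be verified to hold along the whole chain of balls covering a neighborhood of $\partial\Omega$ in $(0,S_0)\times\overline\Omega$, with the observation set $\mathscr O$ on the correct side of the level sets; arranging the geometry of these weights so that the convexification works simultaneously near the boundary and in the interior — and gluing the boundary and interior Carleman estimates across the interface — is the technically delicate part. Everything after the interpolation inequality is the by-now-standard Lebeau–Robbiano bookkeeping sketched above.
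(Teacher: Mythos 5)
The overall scaffolding — build a harmonic lift $Y$ solving $QY=0$ with homogeneous $B_1,B_2$ data, feed it into an interpolation inequality, and absorb via Young — is the right Lebeau--Robbiano scheme and matches the paper's. However, there is a concrete error at the crux of the construction: your lifting function does \emph{not} satisfy $QY=0$. Since $D_s=-i\partial_s$ one has $D_s^4=\partial_s^4$, and, as you compute yourself, $\partial_s^4\bigl(\sinh(\mu_j^{1/4}s)/\mu_j^{1/4}\bigr)=+\mu_j\,\sinh(\mu_j^{1/4}s)/\mu_j^{1/4}$, while $\Delta^2\phi_j=\mu_j\phi_j$. Both contributions therefore carry the \emph{same} sign and
\[
QY=(D_s^4+\Delta^2)Y=2\sum_{\mu_j\le\mu}\mu_j\,\frac{\sinh(\mu_j^{1/4}s)}{\mu_j^{1/4}}\,c_j\phi_j\neq 0.
\]
The $\sinh$ lift is the one adapted to the \emph{second-order} situation $(\partial_s^2+\Delta)Y=0$; for $Q=D_s^4+\Delta^2$ one must instead solve $g^{(4)}=-\mu_j\, g$, whose real solutions are products of trigonometric and hyperbolic functions of $\beta\mu_j^{1/4}s$ with $\beta=\sqrt2/2$. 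The paper accordingly sets $u(s,x)=\sum_{\mu_j\le\mu}\alpha_j\,\mu_j^{-3/4}f(s\mu_j^{1/4})\phi_j(x)$ with $f(s)=\beta\sin(\beta s)\cosh(\beta s)-\beta\cos(\beta s)\sinh(\beta s)$, which satisfies $f^{(4)}=-f$ and, crucially, $f(0)=f'(0)=f''(0)=0$, $f'''(0)=1$.

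This error propagates into the second half of your argument. With the correct $f$ vanishing to third order at $s=0$, your stated boundary data $Y_{|s=0}=0$, $\partial_s Y_{|s=0}=y$ become $u_{|s=0}=\partial_s u_{|s=0}=\partial_s^2 u_{|s=0}=0$ and $\partial_s^3 u_{|s=0}=y$: the observation must be taken from the \emph{third} $s$-trace at $\{s=0\}$. The paper's interpolation inequality (Theorem~\ref{Interp}) is therefore formulated in $H^3$, not $H^1$, and places its observation term at the slice $\{s=0\}\times\scrO$, not at an interior cylinder $\mathscr O\Subset(0,S_0)\times\Omega$. The ``mode-by-mode comparison'' you propose between $\|Y\|_{L^2(\mathscr O)}$ and $\|y\|_{L^2(\omega)}$ also fails away from $s=0$: the modes carry weights $\mu_j^{-3/4}f(\mu_j^{1/4}s)$ that vary across $\mu_j\le\mu$ by factors of order $e^{c\mu^{1/4}}$ in both directions, and the $\phi_j$ are not orthogonal on $\omega$, so one only gets $\|u\|_{L^2(\mathscr O)}\lesssim e^{C\mu^{1/4}}\|y\|_{L^2(\Omega)}$ rather than $\|y\|_{L^2(\omega)}$ on the right. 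The remaining bookkeeping — the lower bound $\|u\|_{L^2}\gtrsim\mu^{-7/4}\|y\|_{L^2(\Omega)}$ on an interior sub-cylinder via the change of variable $t=\beta\mu_j^{1/4}s$, the upper bound $\|u\|_{H^3(Z)}\lesssim e^{K\mu^{1/4}}\|y\|_{L^2(\Omega)}$ via elliptic regularity, and the final absorption — matches the paper's Section~\ref{spectral}, but only once the correct lifting $f$ and the observation at $\{s=0\}$ are in place.
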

\begin{remark}
If $\mu_0\leq 0,$ we replace $P$ by $\mathsf P_{\lambda}=\Pell+\lambda\id$ with $\lambda>-\mu_0.$ Then, the eigenvalues of $\mathsf P_{\lambda}$ are simply $0<\mu_0+\lambda\leq \mu_1+\lambda\leq\dots .$ In addition, observe that if one derives a spectral inequality as (\ref{SINE}) for $\mathsf P_{\lambda}$ then one obtains also such an inequality for $\Pell,$ since $(\mu+\lambda)^{1/4}\lesssim\mu^{1/4}$ for $\mu>0.$ So, in the proof of the spectral inequality, we may therefore assume that $\mu_0>0.$
\end{remark}
\begin{remark}
In the statement of Theorem~\ref{spectineg} we assume \LS condition on $(Q,B_1,B_2)$ but we prove 
below in Proposition~\ref{prop: LS after conjugation} there exists $\varphi$ such that $(Q,B_1,B_2,\varphi)$ 
satisfies \LS condition. This allows to  apply Theorem~\ref{th: ineq Carleman}
\end{remark}
The proof of Theorem \ref{spectineg} is given in Section \ref{spectral}.
\subsection{Some perspectives}
In the light of the results obtained here, it would be of interest to consider more general polyharmonic operators on $\Omega$ with more general \LS conditions. More precisely, one can replace $\Delta^2$ by $(-\Delta)^k$ for $k\geq 3$ along with $k$ boundary conditions $B_1, B_2,\dots, B_k$ that satisfy the \LS boundary condition. It would also be very interesting to investigate the boundary null-controllability of system (\ref{eq: parabolic}) under a specify boundary conditions, let say in the case $B_1y=y$ and $B_2y=\Delta y$ or $\partial_ny$ on $\partial\Omega.$
\subsection{Some notation}

The canonical inner product in $\C^m$ is denoted by $(z,z')_{\C^m}=\sum\limits_{k=0}^{m-1}z_k
\bar{z'}_k,$ for $z=(z_0,\cdots,z_{m-1})\in\C^m,$
$z'=(z'_0,\cdots,z'_{m-1})\in\C^m.$ The associated norm will be
denoted $|z|^2_{\C^m}=\sum\limits_{k=0}^{m-1}|z_k|^2.$

We shall use the notations $a\lesssim b$ for $a\leq C b$ and
$a\gtrsim b$ for $a\geq C b$, with a constant $C>0$ that may change
from one line to another. We also write $a\asymp b$ to denote
$a\lesssim b\lesssim a.$

For an open set $U$ of $\R^N$ we set $U_+ = U \cap \R^N_+$ and 
\begin{equation}
   \label{eq: notation Cbarc}
 \Cbarc  (U_+)=\{ u=v_{|\R^N_+}; v\in\Cinfc(\R^N) \text{ and }\supp (v)\subset U\}.
\end{equation}

We set $\ovl{\mathscr S}(\R^N_+)=\{u_{|\R^N_+};\ u\in\scrS(\R^N)\}$ with
$\scrS(\R^N)$ the usual Schwartz space in $\R^N$:
\begin{align*}
  u \in \scrS(\R^N)
  \ \ \Leftrightarrow \ \ 
  u\in \Cinf(\R^N) \ \text{and} \ 
  \forall\alpha,\beta\in\N^N 
  \sup\limits_{x\in\R^N} |x^{\alpha}D_x^{\beta} u(x)|<\infty.
\end{align*}

\section{Tangential semi-classical calculus and associated Sobolev norm}\label{tangential-semi}
Dealing with boundary problems, we shall locally use coordinates so that the geometry is that of the half-space
$$\ovl{\RNp}=\{\theta\in\R^N~~;~\theta_N\geq 0\},~~\theta=(\theta',\theta_N)\in \R^{N-1}\times\R.$$ We shall use the notation $\upsilon=(\theta,\vartheta,\tau)$ and $\upsilon'=(\theta,\vartheta',\tau)$ in this section.\\
Let $a(\upsilon)\in\Cinf(\ovl{\RNp}\times\R^{N-1}),$ $\tau$ a parameter in $[1,+\infty)$ and $m\in\R$ be such that, for all multi-indices $\alpha,\beta,$ we have
$$|\partial_\theta^{\alpha}\partial^{\beta}_{\vartheta'}a(\upsilon')|\leq
  C_{\alpha,\beta}\lscttau^{m-|\beta|}, \  \
  \text{where} \ \ \lscttau^2=\tau^2+|\vartheta'|^2,~~\theta\in\ovl{\RNp},~\vartheta'\in\R^{N-1}.$$ We then write $a\in\Sscttau^m.$ We also define $\Sscttau^{-\infty}=\bigcap_{r\in\R}\Sscttau^r.$ For $a\in\Sscttau^m$ we define the principal symbol $\sigma(a)$ to be the equivalent class of $a$ in $\Sscttau^m/\Sscttau^{m-1}.$ We recall that $\lscttau^m\in\Sscttau^m.$\\ If $a(\upsilon')\in\Sscttau^m,$ we set
  $$\Opt (a)u(\theta)=(2\pi)^{-(N-1)}\int_{\R^{N-1}}e^{i\theta'\cdot \vartheta'}a(\upsilon')\hat{u}(\vartheta',\theta_N)d\vartheta',~~~~~~u\in\SbarpN,$$
  where $\hat{u}$ is the partial Fourier transform of $u$ with respect to the tangential variables $\theta'.$ We denote by $\Psiscttau^m$ the set of those pseudo-differential operators. We also set $\Lscttau=\Opt(\lscttau^m)$ for $m\in\R.$\\
  Let $m\in\N$ and $m'\in\R.$ If we consider $a$ of the form
$$a(\upsilon)=\sum\limits_{j=0}^{m}a_j(\upsilon')\vartheta_N^j,
  \quad a_j\in \Sscttau^{m+m'-j},$$ we define
  $\Op (a)=\sum\limits_{j=0}^{m}\Opt (a_j)D^j_{\theta_N}.$ We then
  write $a\in \Ssctau^{m,m'}$ and $\Op (a)\in \Psisctau^{m,m'}.$\\
  We define the following norm for $m\in\N$ and $m'\in\R:$
  \begin{align*}
  & \Normsctau{u}{m,m'}
    \asymp \sum\limits_{j=0}^{m}
    \Norm{\Lscttau^{m+m'-j} D_{\theta_N}^j u}{+},\\
  & \Normsctau{u}{m}
    =\Normsctau{u}{m,0}
    \asymp\sum\limits_{j=0}^{m}
    \Norm{\Lscttau^{m-j} D_{\theta_N}^j u}{+},~~\qquad u\in\SbarpN
\end{align*}
where $\Norm{\cdot}{+}:=\|\cdot\|_{L^2(\RNp)}.$ We have
$$\Normsctau{u}{m}\asymp\sum_{\substack{|\alpha|\leq m\\ \alpha\in\N^N}}\tau^{m-|\alpha|}\Norm{ D^\alpha u}{+},$$ and for $m'\in\N$ we have 
$$\Normsctau{u}{m,m'}\asymp\sum_{\alpha_N\leq m}\sum_{\substack{|\alpha|\leq m+m'\\ \alpha=(\alpha',\alpha_N)\in\N^N}}\tau^{m+m'-|\alpha|}\Norm{ D^\alpha u}{+}.$$ 
At the boundary $\{\theta_N=0\}$ we define the following norms, for $m\in\N$ and $m'\in\R$,
\begin{align*}
  \normsctau{\trace(u)}{m,m'}^2
  =\sum\limits_{j=0}^{m}
  \norm{\Lscttau^{m+m'-j} D^j_{\theta_N}u_{{|\theta_N=0^+}}}{L^2(\R^{N-1})}^2,
  \qquad u\in\SbarpN.
\end{align*}
\subsection{Semi-classical calculus with three parameters}\label{SB}
We set $N=d+1$ and $\mathcal{W}=\R^{N}\times \R^{N},$ often referred as \emph{phase space}. A typical element of $\mathcal{W}$ will be $X=(s,x,\sigma,\xi),$ with $s\in\R, x\in\R^d, \sigma\in \R, \xi\in\R^d.$ We also write $x=(x',x_d), x'\in\R^{d-1}, x_d\in\R$ and $\xi=(\xi',\xi_d)\in\R^{d-1}\times\R.$ With $s$ and $x$ playing a very similar role in the definition of the calculus, we set $z=(s,x)\in\R^{N}, z'=(s,x')\in\R^d,$ and $z_N=x_d.$ We also set $\zeta=(\sigma,\xi)\in\R^{N},$ $\zeta'=(\sigma,\xi')\in\R^{N-1},$ and $\zeta_N=\xi_d.$ We shall consider a weight function of the form
\begin{equation}\label{weight-function}
\varphi_{\gamma,\eps}(z)=e^{\gamma\psi_{\eps}(z)},~~~~\psi_{\eps}(z)~=\psi(\eps z', z_N),
\end{equation}
with $\gamma$ and $\eps$ as  parameters satisfying $\gamma\geq 1,$ $\eps\in [0,1],$ and $\psi\in\mathscr{C}^{\infty}(\R^{N}).$ To define a proper pseudo-differential calculus, we assume the following properties for $\psi:$
\begin{equation}\label{prop1-weight-function}
\psi\geq C>0,~~~\|\psi^{(k)}\|_{L^{\infty}}<\infty,~~k\in\N.
\end{equation}
In particular, there exists $k>0$ such that
\begin{equation}\label{prop2-weight-function}
  \sup\limits_{\R^{N}}\psi\leq (k+1)\inf\limits_{\R^{N}}\psi.  
\end{equation}
\subsubsection{A class of semi-classical symbols}
We introduce the following class of tangential symbols depending on $z\in\R^{ N},$ $\zeta'\in\R^{d}$ and $\hat{t}\in\R^{N}.$ We set $\hat{\lambda}_{\mathsf{T}}=|\zeta'|^2+|\hat{t}|^2.$
\begin{definition}
    Let $m\in\R.$ We say that $a(z,\zeta',\hat{t})\in\mathscr{C}^{\infty}(\overline{\R^{N}_+}\times\R^{N-1}\times\R^{d+1})$ belongs to the class $S^m_{\mathsf{T},\hat{t}}$ if for all multi-indices $\alpha\in \N^{N}, \beta\in \N^{N-1},$ there exists $C_{\alpha,\beta,\delta}>0$ such that
    $$|\partial_z^{\alpha}\partial^{\beta'}_{\zeta'}\partial^{\delta}_{\hat{t}}a(z,\zeta',\hat{t})|\leq
C_{\alpha,\beta,\delta}\hat{\lambda}_{\mathsf{T}}^{m-|\beta|-|\delta|},~~~~~~(z,\zeta',\hat{t})\in\overline{\R^{N}_+}\times\R^{N-1}\times\R^{N},~~|\hat{t}|\geq 1.$$
\end{definition}
If $\U$ is a conic open set in $\overline{\R^{N}_+}\times\R^{N-1}\times \R^{N},$ we say that $a\in S^m_{\mathsf{T},\hat{t}}$ in $\U$ if the above property holds for $(z,\zeta',\hat{t})\in\U.$ \\
As opposed to the usual semi-classical symbols, some regularity is required with respect to the semi-classical parameter that is a vector of $\R^{N}.$\\
\indent The above class of symbols will not be used as such to define a class of pseudo-differential operators but rather to generate other classes of symbols and associated operators in a more refined semi-classical culculus that we present below.
\subsubsection{Metrics}

For $\tau_0\geq 2.$ we set $$\mathcal{M}=\R^{N}\times \R^{N}\times [\tau_0,+\infty)\times[1,+\infty)\times[0,1] $$
$$\mathcal{M}_{\mathsf{T}}=\ovl{\R^{N}_+}\times \R^{d}\times [\tau_0,+\infty)\times[1,+\infty)\times[0,1] .$$ We denote by $\y=(z,\zeta, \tau,\gamma,\eps)$ a point in $\mathcal{M}$ and $\y'=(z,\zeta', \tau,\gamma,\eps)$ a point in $\mathcal{M}_{\mathsf{T}}.$ We recall that $z=(s,x),$ $\zeta=(\sigma,\xi)$ and $\zeta'=(\sigma,\xi').$\\
\indent We set $\tilde{\tau}=\tau\gamma\varphi_{\gamma,\eps}(z)\in\R_+.$ For simplicity, even though $\tilde{\tau}$ is independent of $\zeta',$ we shall write
$\tilde{\tau}=\tilde{\tau}(\y')$ when we wish to keep in mind that $\tilde{\tau}$ is not just a parameter but rather a function. As $\psi>0,$ $\tau>\tau_0,$ and $\gamma\geq 1,$ we note that $\tilde{\tau}\geq\tau_0.$ We then set
$$\lambda^2_{\tilde{\tau}}=\lambda^2_{\tilde{\tau}}(\y')=|\zeta|^2+\tilde{\tau}(\y')^2,~~~~~~\lsct^2=\lsct^2(\y')=|\zeta'|^2+\tilde{\tau}(\y')^2.$$ To simplify notation we dropped the explicit dependence of $\lsc$ and $\lsct$ upon $\y$ and $\y'.$ Similarly, we shall write $\varphi(z)$ or simply $\varphi$ in place of $\varphi_{\gamma,\eps}(z).$\\
We consider the following metric on the phase space $\mathcal W=\R^N\times\R^N:$
\begin{equation}\label{meticg}
g=(1+\gamma\eps)^2|dz'|^2+\gamma^2|dz_N|^2+\lsc^{-2}|d\zeta|^2
\end{equation}
for $\tau\geq \tau_0,\gamma\geq 1,$ and $\eps\in [0,1]$ (We remind that this metric is not to be confused with the Riemannian metric $\mathsf{g}$ on $\Omega$).\\
On the phase space $\mathcal W=\R^N\times\R^{N-1}$ adapted to the tangential calculus, we consider the metric
\begin{equation}\label{meticgt}
g_{\mathsf T}=(1+\gamma\eps)^2|dz'|^2+\gamma^2|dz_N|^2+\lsct^{-2}|d\zeta'|^2
\end{equation}
for $\tau\geq \tau_0,\gamma\geq 1,$ and $\eps\in [0,1]$.\\
As shown in \cite[Section 4.1.2]{JL}, the metric $g$ on $\mathcal{W}$ defines a Weyl-H\"ormander pseudo-differential calculus, and both $\varphi$ and $\lsc$ have the properties required of proper order functions. See for instance \cite[Sections 18.4-6]{LarsH1} for a presentation of the Weyl-H\"ormander calculus.
\\
\indent In \cite{JL}, the proof of the uncertainty principle uses the fact that $\tau_0\geq 2.$ The condition $\tau_0\geq 1$ would suffice if one choses $\psi\geq \ln(2).$ \\
Consequently, $\tilde{\tau}(\y')$ is also an admissible order function for both calculi.
\\
The proofs of the following two propositions can be found in \cite[Appendix A.2.1]{JL}.
\begin{proposition}
The metric $g$ and the order functions $\varphi_{\gamma,\eps}, \lsc$ are admissible, in the
sense that the following properties hold (uniformly with respect to the parameters $\tau,\gamma$ and $\eps$).
\begin{enumerate}
    \item $g$ satisfies the uncertainty principle, that is, $h^{-1}_g=\gamma^{-1}\lsc\geq 1,$
    \item $\varphi_{\gamma,\eps}, \lsc$ and $g$ are slowly varying,
\item $\varphi_{\gamma,\eps}, \lsc$ and $g$ are tempered.
\end{enumerate}
\end{proposition}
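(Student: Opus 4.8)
The three properties to be verified are standard axioms of the Weyl–Hörmander calculus, so the plan is to check each one by direct computation from the explicit formulas \eqref{meticg}, \eqref{weight-function}, and \eqref{prop1-weight-function}–\eqref{prop2-weight-function}, tracking the dependence on $\tau,\gamma,\eps$ throughout. First I would record the elementary size comparisons that will be used repeatedly: from $\tilde\tau=\tau\gamma\varphi_{\gamma,\eps}(z)$ and $\psi\geq C>0$ one has $\tilde\tau\geq \tau_0$, hence $\lsc^2=|\zeta|^2+\tilde\tau^2\geq \tau_0^2$, and moreover $\lsc\geq\tilde\tau\geq \tau\gamma$ so that $\gamma^{-1}\lsc\geq\tau\geq\tau_0\geq 1$; this is precisely the uncertainty principle $h_g^{-1}=\gamma^{-1}\lsc\geq 1$, which also makes transparent that the $|dz_N|^2$-coefficient $\gamma^2$ is dominated by $\lsc^2$ and the $|dz'|^2$-coefficient $(1+\gamma\eps)^2$ is as well (using $\gamma\eps\leq\gamma\leq\tilde\tau\leq\lsc$). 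That takes care of item (1).

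For slow variation (item (2)), I would fix the $g$-ball of radius $r$ small around a point $\y$ and estimate the variation of each of $\varphi_{\gamma,\eps}$, $\lsc$, and the metric coefficients on that ball. The $z$-variation is controlled because moving $z'$ by $\delta z'$ with $(1+\gamma\eps)|\delta z'|\leq r$ and $z_N$ by $\delta z_N$ with $\gamma|\delta z_N|\leq r$ changes $\gamma\psi_\eps(z)=\gamma\psi(\eps z',z_N)$ by $O(r)$ (the $z'$-derivative carries a factor $\gamma\eps$, the $z_N$-derivative a factor $\gamma$, and both are absorbed by the metric weights), hence changes $\varphi_{\gamma,\eps}$ by a multiplicative factor $1+O(r)$; since $\tilde\tau$ is $\varphi$ times a quantity independent of $z$, the same holds for $\tilde\tau$, and then for $\lsc^2=|\zeta|^2+\tilde\tau^2$, using in addition that $|\zeta|$ varies by at most $O(r)\lsc$ on the $\zeta$-slice $\lsc^{-1}|\delta\zeta|\leq r$. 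Slow variation of the metric $g$ itself is immediate once slow variation of $\lsc$ is known, because the coefficients $(1+\gamma\eps)^2$ and $\gamma^2$ do not depend on $\y$ at all, and $\lsc^{-2}$ inherits slow variation from $\lsc$.

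For temperateness (item (3)), I would use the standard criterion: there exist $C,M$ with $\varphi_{\gamma,\eps}(\y_1)/\varphi_{\gamma,\eps}(\y_2)\leq C(1+g_{\y_2}(\y_1-\y_2))^M$, and similarly for $\lsc$ and for the metric $g_{\y_1}/g_{\y_2}$. The bound on $\varphi$ is where property \eqref{prop2-weight-function} enters decisively: since $\sup_{\R^N}\psi\leq(k+1)\inf_{\R^N}\psi$, the ratio $\varphi_{\gamma,\eps}(z_1)/\varphi_{\gamma,\eps}(z_2)=\exp\bigl(\gamma(\psi_\eps(z_1)-\psi_\eps(z_2))\bigr)$ is bounded by $\exp(\gamma k\inf\psi)\leq\exp(k\gamma\psi_\eps(z_2))$, and $\gamma\psi_\eps(z_2)=\log\varphi_{\gamma,\eps}(z_2)$, which one then has to reabsorb into a power of $1+g_{\y_2}(\y_1-\y_2)$ — the key point being that $\gamma|z_{1,N}-z_{2,N}|$ and $\gamma\eps|z_1'-z_2'|$ already appear inside $g_{\y_2}(\y_1-\y_2)$, so a uniform power $M$ works; the $\zeta$-part is routine since $\lsc^2$ grows at most polynomially in $|\zeta_1-\zeta_2|/\lsc_2$ plus the $z$-contribution just handled. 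The main obstacle, and the only place requiring genuine care, is keeping all constants uniform in $(\tau,\gamma,\eps)$ in this last step: one must avoid any estimate that secretly costs a power of $\gamma$ or $\eps^{-1}$, and the way to do that is to always pair a $\gamma$ (resp.\ $\gamma\eps$) coming from differentiating $\varphi$ with the matching $\gamma^2$ (resp.\ $(1+\gamma\eps)^2$) weight in $g$, exactly as in \cite[Appendix A.2.1]{JL}; since the statement explicitly refers to that reference, I would simply reproduce that computation in the present three-parameter notation and remark that the addition of the $s$-variable changes nothing, $s$ playing the same role as a tangential $x'$-variable.
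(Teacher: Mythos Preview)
The paper does not prove this proposition itself; it simply refers to \cite[Appendix A.2.1]{JL}. Your plan is exactly the standard Weyl--H\"ormander verification that that reference carries out, so the approaches coincide and your sketch for items (1) and (2) is correct as written.

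One caution on your temperateness sketch (item (3)): the tempered-weight condition is formulated with the $\sigma$-dual metric $g^\sigma$, whose $z$-block carries the weight $\lsc^2$ (not $\gamma^2$ or $(1+\gamma\eps)^2$). The mean-value argument you outline, paired only with the weights of $g$ itself, gives $\varphi(z_1)/\varphi(z_2)\leq \exp\bigl(C\,g_{\y_2}(\y_1-\y_2)^{1/2}\bigr)$, which is exponential and cannot be ``reabsorbed'' into a fixed power of $1+g_{\y_2}(\y_1-\y_2)$. What actually works (and is the point of assumption \eqref{prop2-weight-function}) is the two-regime argument: for small $g_{\y_2}$-distance the mean-value bound gives a uniform constant, while for large distance one uses the global bound $\varphi(z_1)/\varphi(z_2)\leq \varphi(z_2)^k$ coming from \eqref{prop2-weight-function} together with $g^\sigma_{\y_2}(\y_1-\y_2)\geq \lsc(\y_2)^2|z_1-z_2|^2\geq \tau^2\gamma^2\varphi(z_2)^2|z_1-z_2|^2$, which dominates any fixed power of $\varphi(z_2)$ once $\gamma|z_{1,N}-z_{2,N}|$ or $(1+\gamma\eps)|z_1'-z_2'|$ exceeds a fixed threshold. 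This is the computation in \cite[Appendix A.2.1]{JL} you are deferring to; just make sure your write-up uses $g^\sigma$ at that step.
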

Similarly, we have the following proposition.
\begin{proposition}
The metric $g_{\mathsf T}$ and the order functions $\varphi_{\gamma,\eps}, \lsct$ are admissible. For
tangential calculus we have
$h^{-1}_{g_\mathsf T}=(1+\eps\gamma)^{-1}\lsct\geq 1.$
\end{proposition}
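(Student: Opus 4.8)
The plan is to carry out, for the tangential metric $g_{\mathsf T}$ of \eqref{meticgt} and the order functions $\varphi_{\gamma,\eps},\lsct$, the same three verifications as in the preceding proposition — the uncertainty principle, slow variation, and $g_{\mathsf T}$-temperateness — following the argument of \cite[Appendix A.2.1]{JL}. The bookkeeping is lighter here because in the tangential calculus $z_N$ is only a parameter, so $(z',\zeta')$ is the unique symplectically paired block, and the only base-point dependent coefficient of $g_{\mathsf T}$ in \eqref{meticgt} is $\lsct^{-2}$ (the coefficients $(1+\gamma\eps)^2$ and $\gamma^2$ are constant in $(z,\zeta')$). Hence each of the three requirements reduces to the corresponding statement for the single order function $\lsct$, equivalently — since $\lsct^2=|\zeta'|^2+\tilde{\tau}^2$ — for $\tilde{\tau}=\tau\gamma\,\varphi_{\gamma,\eps}(z)$, uniformly in $(\tau,\gamma,\eps)$.

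First I would compute the Planck function. Because $(z',\zeta')$ is the only symplectically paired block, the dual metric in these variables is $g^\sigma_{\mathsf T}=\lsct^2\,|dz'|^2+(1+\gamma\eps)^{-2}\,|d\zeta'|^2$, so $h_{g_{\mathsf T}}^2=(1+\gamma\eps)^2\lsct^{-2}$, i.e. $h_{g_{\mathsf T}}^{-1}=(1+\gamma\eps)^{-1}\lsct$ exactly, as stated. To see that $h_{g_{\mathsf T}}^{-1}\geq 1$, use \eqref{prop1-weight-function} to bound $\lsct\geq\tilde{\tau}=\tau\gamma\,e^{\gamma\psi_\eps(z)}\geq\tau_0\gamma\,e^{\gamma\inf\psi}\geq 2\gamma\geq 1+\gamma\eps$, since $\tau\geq\tau_0\geq 2$, $\gamma\geq 1$, $\eps\in[0,1]$ and $\inf\psi>0$ (if one only takes $\tau_0\geq 1$, one uses $\psi\geq\ln 2$ instead, as noted after \eqref{meticg}).

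For slow variation, I would note that $g_{\mathsf T,\hat\y'}(\y'-\hat\y')\leq c$ forces $(1+\gamma\eps)\,|z'-\hat z'|\leq\sqrt c$ and $\gamma\,|z_N-\hat z_N|\leq\sqrt c$ through the position part of $g_{\mathsf T}$; then the mean value theorem, the first-order bound on $\psi$ from \eqref{prop1-weight-function}, and $\gamma\eps\leq 1+\gamma\eps$ give $\gamma\,|\psi_\eps(z)-\psi_\eps(\hat z)|\leq\|\psi^{(1)}\|_{L^{\infty}}\bigl(\gamma\eps\,|z'-\hat z'|+\gamma\,|z_N-\hat z_N|\bigr)\leq 2\|\psi^{(1)}\|_{L^{\infty}}\sqrt c$, so $\varphi_{\gamma,\eps}(z)/\varphi_{\gamma,\eps}(\hat z)=e^{\gamma(\psi_\eps(z)-\psi_\eps(\hat z))}$ is within any prescribed factor of $1$ for $c$ small. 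Thus $\tilde{\tau}(\y')\asymp\tilde{\tau}(\hat\y')$; combined with $|\zeta'-\hat\zeta'|\leq\sqrt c\,\lsct(\hat\y')$ this yields $\lsct(\y')\asymp\lsct(\hat\y')$ with constants tending to $1$ as $c\to 0$, hence slow variation of $\lsct$, $\tilde{\tau}$, $\varphi_{\gamma,\eps}$ and $g_{\mathsf T}$.

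The temperateness step is the one I expect to be the real difficulty: $\varphi_{\gamma,\eps}(z)/\varphi_{\gamma,\eps}(\hat z)$ is a priori only controlled exponentially in the separation, whereas one needs a polynomial bound in $g^\sigma_{\mathsf T,\hat\y'}(\y'-\hat\y')$. The way through is property \eqref{prop2-weight-function}: from $\psi_\eps(z)-\psi_\eps(\hat z)\leq\sup\psi-\inf\psi\leq k\inf\psi\leq k\,\psi_\eps(\hat z)$ one gets $e^{\gamma(\psi_\eps(z)-\psi_\eps(\hat z))}\leq\varphi_{\gamma,\eps}(\hat z)^{k}$, while from the shape of $g^\sigma_{\mathsf T}$, $g^\sigma_{\mathsf T,\hat\y'}(\y'-\hat\y')\geq\lsct(\hat\y')^2\,|z'-\hat z'|^2\geq\tau_0^2\gamma^2\,\varphi_{\gamma,\eps}(\hat z)^2\,|z'-\hat z'|^2$. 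Therefore on $|z'-\hat z'|\geq 1$ the factor $\bigl(1+g^\sigma_{\mathsf T,\hat\y'}(\y'-\hat\y')\bigr)^{M}\gtrsim\varphi_{\gamma,\eps}(\hat z)^{2M}$ absorbs $\varphi_{\gamma,\eps}(\hat z)^{k}$ as soon as $2M\geq k$; while on $|z'-\hat z'|<1$ the constraint $g^\sigma_{\mathsf T,\hat\y'}(\y'-\hat\y')\lesssim 1$ forces $|z'-\hat z'|\lesssim(\tau_0\gamma\,\varphi_{\gamma,\eps}(\hat z))^{-1}$, whence $\gamma\,|\psi_\eps(z)-\psi_\eps(\hat z)|\leq\|\psi^{(1)}\|_{L^{\infty}}\gamma\eps\,|z'-\hat z'|\lesssim\eps/(\tau_0\,e^{\gamma\inf\psi})\lesssim 1$. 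The $|\zeta'|$-part of $\lsct(\y')$ is harmless, since $|\zeta'|\leq|\hat\zeta'|+|\zeta'-\hat\zeta'|\leq\lsct(\hat\y')+(1+\gamma\eps)\,\bigl(g^\sigma_{\mathsf T,\hat\y'}(\y'-\hat\y')\bigr)^{1/2}$ with $1+\gamma\eps\lesssim\lsct(\hat\y')$ by the uncertainty bound, and the reciprocal inequality follows by exchanging $\y'$ and $\hat\y'$ (using that $\psi_\eps(\hat z)-\psi_\eps(z)\leq k\,\psi_\eps(z)$ holds as well). This gives $g_{\mathsf T}$-temperateness of $\tilde{\tau}$, hence of $\lsct$, $\varphi_{\gamma,\eps}$ and $g_{\mathsf T}$, and completes the verification.
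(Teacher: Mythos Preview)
The paper does not give its own proof of this proposition; it simply refers to \cite[Appendix~A.2.1]{JL}, and your proposal is precisely an attempt to reconstruct that argument, so the approaches coincide by design. Your computation of $h_{g_{\mathsf T}}^{-1}=(1+\eps\gamma)^{-1}\lsct$ and the verification $\lsct\geq\tilde\tau\geq\tau_0\gamma\,e^{\gamma\inf\psi}\geq 2\gamma\geq 1+\gamma\eps$ are correct, and the slow-variation step is fine as well.

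There is, however, a logical slip in the temperateness paragraph. You split into the cases $|z'-\hat z'|\geq 1$ and $|z'-\hat z'|<1$, and in the second case you write ``the constraint $g^\sigma_{\mathsf T,\hat\y'}(\y'-\hat\y')\lesssim 1$ forces \dots''. But nothing in the hypothesis $|z'-\hat z'|<1$ bounds $g^\sigma_{\mathsf T}$ from above: one can have $|z'-\hat z'|$ small (even $0$) and $g^\sigma_{\mathsf T}$ arbitrarily large through the $(1+\gamma\eps)^{-2}|\zeta'-\hat\zeta'|^2$ contribution, or $|z'-\hat z'|<1$ with $\lsct(\hat\y')\,|z'-\hat z'|$ large. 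The natural dichotomy is on the size of the \emph{position part} $\lsct(\hat\y')\,|z'-\hat z'|$ of $(g^\sigma_{\mathsf T})^{1/2}$: when it is $\leq 1$, the gradient bound together with $(1+\gamma\eps)\leq\lsct(\hat\y')$ gives $\gamma\eps\,|z'-\hat z'|\leq 1$ and hence $\varphi(z)/\varphi(\hat z)\leq e^{C}$; when it is $\geq 1$, one combines \eqref{prop2-weight-function} with $\varphi(\hat z)\leq\tilde\tau(\hat\y')/(\tau\gamma)\leq\lsct(\hat\y')$ and the gradient estimate to get the required polynomial control. As written, the intermediate regime (say $|z'-\hat z'|\sim\lsct(\hat\y')^{-1/2}$, where $|z'-\hat z'|<1$ yet $g^\sigma_{\mathsf T}\gg 1$) is not covered by your two cases; this is the point to close by consulting \cite[Appendix~A.2.1]{JL} directly.
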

\subsubsection{Symbols}

Let $a(\y)\in \mathscr C^{\infty}(\R^N\times\R^N)$, and let $\tau,\gamma$ and $\eps$ act as parameters, and let $m,r\in\R$ be such that for all multi-indices $\alpha,\beta\in\N^N$ with $\alpha=(\alpha',\alpha_N)$ we have 
$$|\partial^{\alpha}_z\partial^{\beta}_\zeta a(\y)|\leq C_{\alpha,\beta}\gamma^{|\alpha_N|}(1+\eps\gamma)^{|\alpha'|}\tilde{\tau}^r\lsc^{m-|\beta|},~~~~\y'\in\mathcal{M}. $$
With the notation of \cite[Sections 18.4-18.6]{LarsH1} we then have $a(\y)\in S(\tilde{\tau}^r\lsc^m,g).$
Similarly, let $a(\y')\in\mathscr C^{\infty}(\ovl{\R^N_+}\times\R^{N-1}),$ let $\tau,\gamma,$ and $\eps$ as parameters, and let $m\in\R.$ If for all multi-indices $\alpha=(\alpha',\alpha_N)\in\N^N,~~\beta'\in\N^{d-1},$ we have
$$|\partial^{\alpha}_z\partial^{\beta'}_{\zeta'} a(\y')|\leq C_{\alpha,\beta}\gamma^{|\alpha_N|}(1+\eps\gamma)^{|\alpha'|}\tilde{\tau}^r\lsct^{m-|\beta'|},~~~~\y'\in\mathcal{M}_{\mathsf T}, $$
we write $a(\y')\in S(\tilde{\tau}^r\lsct^m,g_\mathsf T).$\\
The principal symbol associated with $a(\y')\in S(\tilde{\tau}^r\lsct^m,g_\mathsf T)$ is given by the equivalence class of $a(\y')$ in $S(\tilde{\tau}^r\lsct^m,g_\mathsf T)/S((1+\eps\gamma)\tilde{\tau}^r\lsct^{m-1},g_\mathsf T).$\\
 We define the following class of symbols that are polynomial with respect to $\zeta_N=\xi_d:$
$$\Ssc^{m,m'}=\sum\limits_{j=0}^mS(\lsct^{m+m'-j},g_\mathsf T)\zeta^j_N.$$ For $a(\y)\in \Ssc^{m,m'}$ with $a(\y)=\sum\limits_{j=0}^ma_j(\y')\zeta^j_N$ and $a_j(\y')\in S(\lsct^{m+m'-j},g_\mathsf T),$ we denote the principal part of $a(\y)$ by $\sigma(a)(\y)=\sum\limits_{j=0}^m\sigma(a_j)(\y')\zeta^j_N.$ For this calculus with three parameters to make sense, it is important to check that $\lsc\in S(\lsc,g), \lsct\in S(\lsct,g_\mathsf T)$ and $\tilde{\tau}\in S(\tilde{\tau},g)\cap S(\tilde{\tau},g_\mathsf T).$
In fact the have the following property which implies the first two.
\begin{lemma}
We have $\tilde{\tau}=\tau\gamma\varphi_{\gamma,\eps}\in S(\tilde{\tau},g)\cap S(\tilde{\tau},g_\mathsf T).$
\end{lemma}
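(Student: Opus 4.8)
The plan is to verify directly that $\tilde\tau = \tau\gamma\varphi_{\gamma,\eps}(z)$ satisfies the symbol estimates defining $S(\tilde\tau,g)$ and $S(\tilde\tau,g_{\mathsf T})$, namely that for all multi-indices $\alpha = (\alpha',\alpha_N) \in \N^N$ and $\beta \in \N^N$ (resp.\ $\beta' \in \N^{d-1}$) one has
$$|\partial_z^\alpha \partial_\zeta^\beta \tilde\tau| \lesssim \gamma^{|\alpha_N|}(1+\eps\gamma)^{|\alpha'|}\tilde\tau \lsc^{-|\beta|}$$
and similarly with $\lsct$ for the tangential metric. Since $\tilde\tau$ does not depend on $\zeta$, only the $\beta = 0$ (resp.\ $\beta' = 0$) case is nontrivial, and the claim reduces to controlling $|\partial_z^\alpha \tilde\tau|$ by $\gamma^{|\alpha_N|}(1+\eps\gamma)^{|\alpha'|}\tilde\tau$.

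First I would reduce to estimating derivatives of $\varphi_{\gamma,\eps}(z) = e^{\gamma\psi_\eps(z)}$ with $\psi_\eps(z) = \psi(\eps z', z_N)$. Writing $\varphi = e^{\gamma\psi_\eps}$ and differentiating, each $z'$-derivative brings down a factor $\gamma\,\partial_{z'}\psi_\eps$, and by the chain rule $\partial_{z_i'}\psi_\eps(z) = \eps (\partial_i\psi)(\eps z', z_N)$, so a single tangential derivative produces a factor bounded by $\gamma\eps\|\psi'\|_{L^\infty} \lesssim \gamma\eps \leq 1+\gamma\eps$; similarly each normal derivative produces a factor $\gamma\,\partial_{z_N}\psi_\eps$ bounded by $\gamma\|\psi'\|_{L^\infty} \lesssim \gamma$. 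More precisely, by the Faà di Bruno formula (or a routine induction on $|\alpha|$), $\partial_z^\alpha \varphi$ is a finite sum of terms of the form $\varphi \prod_k \gamma\,\partial_z^{\gamma_k}\psi_\eps$ with $\sum_k \gamma_k = \alpha$; using $|\partial_z^{\gamma_k}\psi_\eps| \lesssim \eps^{|\gamma_k'|}$ (bounded since $\eps \leq 1$, all derivatives of $\psi$ bounded by \eqref{prop1-weight-function}) together with $\eps^{|\gamma_k'|} \leq 1$, one gets
$$|\partial_z^\alpha \varphi| \lesssim \varphi \sum \prod_k \gamma \eps^{|\gamma_k'|} \lesssim \varphi\, \gamma^{|\alpha_N|}(\gamma\eps)^{|\alpha'|} \leq \varphi\, \gamma^{|\alpha_N|}(1+\gamma\eps)^{|\alpha'|},$$
where the exponent bookkeeping uses that the total number of $\gamma$-factors is $\sum_k|\gamma_k| = |\alpha|$ and that $|\alpha'|$ of the associated $\psi_\eps$-derivatives carry at least one tangential differentiation (the remaining ones being pure $z_N$-derivatives); one also absorbs the harmless case where some $\gamma_k = 0$ contributes no $\gamma$ factor, which only improves the bound. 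Multiplying through by $\tau\gamma$ gives $|\partial_z^\alpha \tilde\tau| \lesssim \tilde\tau\, \gamma^{|\alpha_N|}(1+\gamma\eps)^{|\alpha'|}$, which is exactly the required estimate for membership in $S(\tilde\tau,g)$ (the $\zeta$-derivatives for $\beta \neq 0$ vanish identically, so those estimates are trivial). The tangential statement $\tilde\tau \in S(\tilde\tau,g_{\mathsf T})$ is verified by the same computation, since the only difference is that $\zeta$-derivatives are replaced by $\zeta'$-derivatives, which again annihilate $\tilde\tau$.

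The main obstacle — really the only point requiring care — is the precise exponent bookkeeping in the Faà di Bruno expansion: one must check that a multi-index $\alpha$ with $\alpha_N$ normal derivatives and $|\alpha'|$ tangential derivatives can indeed only generate powers up to $\gamma^{|\alpha_N|}(\gamma\eps)^{|\alpha'|}$ and not, say, an extra factor of $\gamma$. This follows because each of the $|\alpha|$ differentiations of $\varphi$ contributes exactly one factor of $\gamma$ (from differentiating the exponential), and a factor of $\eps$ appears precisely when that differentiation falls, through the chain rule, on a tangential slot of $\psi_\eps$; grouping derivatives into the blocks $\gamma_k$ of Faà di Bruno does not change this count. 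Finally, one records the consequence stated in the text: since $\lsc^2 = |\zeta|^2 + \tilde\tau^2$ and $\lsct^2 = |\zeta'|^2 + \tilde\tau^2$, the membership $\tilde\tau \in S(\tilde\tau,g)\cap S(\tilde\tau,g_{\mathsf T})$ together with $|\zeta|, |\zeta'| \in S(\lsc,g), S(\lsct,g_{\mathsf T})$ (immediate) yields $\lsc \in S(\lsc,g)$ and $\lsct \in S(\lsct,g_{\mathsf T})$, so the three-parameter calculus is well posed.
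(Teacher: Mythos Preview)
Your proof is correct and follows essentially the same route as the paper: reduce to $z$-derivatives since $\tilde\tau$ is independent of $\zeta$, expand $\partial_z^\alpha\varphi_{\gamma,\eps}$ via Fa\`a di Bruno into terms $\varphi\,\gamma^n\prod_k\partial_z^{\alpha^{(k)}}\psi_\eps$, extract $\eps^{|\alpha'|}$ from the chain rule on $\psi_\eps$, and bound using $n\leq|\alpha|$ and $\gamma\geq 1$. One small slip: in your bookkeeping paragraph you say ``the total number of $\gamma$-factors is $\sum_k|\gamma_k|=|\alpha|$,'' but in fact there is one $\gamma$ per block, so the count is $n$ (the number of parts), not $|\alpha|$; the displayed inequality is nonetheless correct because $\gamma^n\leq\gamma^{|\alpha|}$ for $\gamma\geq 1$.
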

\begin{proof}
We first remark that only differentiation with respect to $z=(s,x)$ of $\tilde{\tau}$ have to be considered since $d_\zeta\tilde{\tau}=0.$ Then, for any $\alpha=(\alpha',\alpha_N)\in\N^N,$ we can write $\partial^{\alpha}_z\tilde{\tau}(\y')$ as a linear combination of terms of the form
$$\partial^{\alpha}_z\tilde{\tau}(\y')=\tau\gamma^{1+n}\varphi_{\gamma,\eps}\prod\limits_{j=0}^n\partial^{\alpha^{(j)}}_z\psi_\eps(z)=\tau\gamma^{1+n}\eps^{|\alpha'|}\varphi_{\gamma,\eps}\prod\limits_{j=0}^n\partial^{\alpha^{(j)}}_z\psi(\eps z',z_N),$$ with $\sum\limits_{j=1}^n\alpha^{(j)}=\alpha,$ $|\alpha^{(j)}|\geq 1,$ $j=1,\dots, n$ and $n\leq |\alpha|.$ As, $\gamma\geq 1,$ this implies 
$$|\partial^{\alpha}_z\tilde{\tau}(\y')|\leq \tilde{\tau}(\y')\gamma^{\alpha_N}(\eps\gamma)^{|\alpha'|},$$ since $\|\psi\|_{L^{\infty}}\leq C$ for any $k\in\N,$ which ends the proof. Observing that $\tilde{\tau}=\gamma\tau\varphi_{\gamma,\eps}\lesssim\lsc$ (resp. $\tilde{\tau}=\gamma\tau\varphi_{\gamma,\eps}\lesssim\lsct$), we conclude that  $\lsc\in S(\lsc,g), \lsct\in S(\lsct,g_\mathsf T).$
\end{proof}
\subsubsection{Operators and Sobolev bounds} For $a\in S(\tilde{\tau}^{r}\lambda^m_{\tilde{\tau}},g)$ we define the following pseudo-differential operator in $\R^{N}:$
\begin{equation}
\Op (a)u(z)=(2\pi)^{-N}\int_{\R^{N}}e^{iz\cdot \zeta}a(z,\zeta,\tau,\gamma,\eps)\hat{u}(\zeta)d\zeta,~~~~~~u\in\SbarpN,
\end{equation}
where $\hat{u}$ is the Fourier transform of $u$. The associated class of pseudo-differential operators is denoted by $\Psi(\tilde{\tau}^{r}\lambda^m_{\tilde{\tau}},g).$ If $a$ is a polynomial in $\zeta$ and $\hat{\tau}(\y')=\tilde{\tau}d_z\psi_{\eps}(z),$ we write $\Op(a)\in\mathscr{D}(\tilde{\tau}^{r}\lambda^m_{\tilde{\tau}},g).$\\
\indent Similarly we define the tangential operators. For $a\in S(\tilde{\tau}^{r}\lsct^m,g_{\mathsf{T}})$ we set 
\begin{equation}
\Op (a)u(z)=(2\pi)^{-(N-1)}\iint_{\R^{2(N-1)}}e^{i(z'-y')\cdot \zeta'}a(z,\zeta',\tau,\gamma,\eps)u(y',z_d)d\zeta'dy',~~~~~~u\in\SbarpN, 
\end{equation}
and $z\in \R^{N}.$ We write $A=\Opt(a)\in\Psi(\tilde{\tau}^{r}\lsct^m,g_{\mathsf{T}})$ and $\Lsct^m=\Opt(\lsct^m).$\\
\indent We also introduce the following class of operators that act as differential operators in the $z_N=x_d$ variable and as tangential pseudo-differential operators in the $z'=(s,x')$ variables:
$$\Psisc^{m,r}=\sum\limits_{j=0}^m\Psi(\lsct^{m+r-j},g_\mathsf T)D^j_{z_N},~~~~m\in\N,~~~r\in\R,$$
that is, $\Op(a)\in\Psisc^{m,r}$ if $a\in \Ssc^{m,r}.$ Operators of this class can be applied to functions that are only defined on the half-space $\ovl{\R^N_+}.$\\

\indent For functional norms we use the notation $\|.\|$ for functions
defined in the interior of the domain and $|.|$ for functions defined on the boundary.
In that spirit, we shall use the notation
\begin{align*}
  \Norm{u}{+}=\Norm{u}{L^2(\R^N_+)}, \quad
  \inp{u}{\tilde{u}}_+=\inp{u}{\tilde{u}}_{L^2(\R^N_+)},
\end{align*}
for functions defined in $\R^N_+$ and
\begin{align*}
  \norm{w}{\partial}=\Norm{w}{L^2(\R^{N-1})}, \quad
  \inp{w}{\tilde{w}}_{\partial}=\inp{w}{\tilde{w}}_{L^2(\R^{N-1})},
\end{align*}
for functions defined on $\{ x_d =0\}$, such as traces. We define the following semi-classical Sobolev norms (here with $\tilde{\tau})$:
\begin{align*}
& |u|_{m,\tilde{\tau}}=|\Lsct^m u_{|z_d=0^+}|_{L^2(\R^{N-1})}, ~~m\in\R, u\in\SbarpN,\\
  & \Normsc{u}{m}
    =\Normsc{u}{m,0}
    \asymp\sum\limits_{j=0}^{m}
    \Norm{\Lsct^{m-j} D_{x_d}^j u}{+},~~~~m\in\N, u\in \SbarpN.
\end{align*}
We also set, for $m\in\N$ and $m'\in\R$,
$$\Normsc{u}{m,m'}
    \asymp \sum\limits_{j=0}^{m}
    \Norm{\Lsct^{m+m'-j} D_{x_d}^j u}{+},~~~ u\in \SbarpN.$$
At the boundary $\{x_N=0\}$ we define the following norms, for $m\in\N$ and $m'\in\R$,
\begin{align*}
  \normsc{\trace(u)}{m,m'}^2
  =\sum\limits_{j=0}^{m}
  \norm{\Lsct^{m+m'-j} D^j_{x_d}u\br}{\partial}^2,
  \qquad u\in\SbarpN.
\end{align*}

\subsection{Boundary  differential quadratic forms}
\label{sec: Boundary  differential quadratic forms}
A differential quadradic form acts on a function and involves
differentiations of various degrees of the function. One can associate
to these forms a symbol and positivity inequality that can be obtained in
the form of a G{\aa}rding inequality. 
Such forms appear in proofs of Carleman estimates in the pioneering work
of H\"ormander \cite[Section 8.2]{Hoermander:63}. 

Here differential quadradic forms are
defined at the boundary. The result we
present here without proof can be found in \cite{ML} and \cite{JGL-vol2}. 

\begin{definition}\label{conic}
A set $\scrO\subset\R^N\setminus\{0\}$ is called a conic set if, together with any point $\xi,$ it contains all the points $\lambda\xi$ where $\lambda>0.$ By a conic neighborhood of a point $\xi\in\R^N\setminus\{0\},$ we mean an open conic set that contains $\xi.$ 
\end{definition}
\begin{definition}
\label{def: boundary quadratic form}
Let $u\in  \SbarpN$. We say that
\begin{equation*}
  \Bquad(u)=\sum_{k=1}^n\inp{A^k u\br}{B^k u\br}_\partial, 
  \ \  A^k = a^k(s,x,D,\tau,\gamma,\eps),  \ B^k = b^k(s,x,D,\tau,\gamma,\eps),  
\end{equation*}
is a boundary differential quadratic form of type $(m-1,r)$ with $\Cinf$
coefficients, if for each $k=1,\dots n$, we have $a^k(\y)\in
\Ssc^{m-1,r'}(\ovl{\R^N_+} \times\R^N)$, $b^k(\y)\in
\Ssc^{m-1,r''}(\ovl{\R^N_+}  \times\R^N)$ with $r'+r''=2
r$, $\y=(\y',\zeta_N)$ with $\y'=(s,x,\xi',\tau,\gamma,\eps)$. The symbol of the
boundary differential quadratic form $\Bquad$ is defined by
\begin{equation*}
  \bquad(\y',\xi_N,\tilde{\zeta}_N)=\sum_{k=1}^n
  a^{k}(\y',\zeta_N)\ovl{b^k}(\y',\tilde{\zeta}_N). 
\end{equation*}
\end{definition}

For $\z=(z_0,\dots,z_{\ell-1})\in\C^\ell$ and $a(\y)\in
\Ssc^{\ell-1,t}$, of the form 
$a(\y',\xi_N) = \sum_{j=0}^{\ell-1} a_j(\y') \zeta_N^j$ with $a_j(\y') \in S(\lsct^{\ell-1+t-j},g_\mathsf T)$
we set
\begin{equation}
  \label{eq: symbol z}
  \un{a}(\y',\z)=\sum_{j=0}^{\ell-1}a_j(\y')z_j.
\end{equation}
From the boundary differential quadratic form $\Bquad$ we introduce the  following
bilinear symbol $\un{\Bquad}:\C^m\times\C^m\to \C $
\begin{equation}
  \label{eq: bilinear symbol-boundary quadratic form}
  \un{\Bquad}(\y',\z,\z')
  =\sum_{k=1}^n \un{a^k}(\y',\z)\ovl{\un{b^k}}(\y',\ovl{\z}'),
  \quad \z,\z'\in\C^m.
\end{equation}

We let $\mathscr W$ be an open conic set in $\R\times \R^{N-2} \times \R^{N-2} \times \R_+\times [1,+\infty)\times\in [0,1]$.
\begin{definition}
  Let $\Bquad$ be a boundary differential quadratic form of type $(m-1,r)$ with
  homogeneous principal symbol and 
  associated with the bilinear symbol $\un{\Bquad}(\y',\z,\z')$. We say that $\Bquad$ is
  positive definite in $\mathscr W$ if there exist $C>0$ and $R>0$ such that 
\begin{equation*}
  \Re \un{\Bquad}(\y'', x_N = 0^+, \z,\z)\geq
  C\sum_{j=0}^{m-1}  \lsct^{2(m-1-j+r)}|z_j|^2,
\end{equation*}
for $\y''=(s,x',\xi',\tau,\gamma,\eps) \in \mathscr W$, with $|(\xi', \tilde{\tau})| \geq
R$, and
$\z=(z_0,\dots,z_{m-1})\in \C^m$.
\end{definition}
We have the following G{\aa}rding inequality.
\begin{proposition}
  \label{prop: boundary form -Gaarding tangentiel}
  Let $\Bquad$ be a boundary differential quadratic form of type $(m-1,r)$,
  positive definite in $\mathscr W$, an open conic set in $\R\times \R^{N-2}
  \times \R^{N-2} \times [\tau_0,+\infty)\times [1,+\infty)\times\in [0,1]$, with bilinear symbol
  $\un{\Bquad}(\y',\z,\z')$. Let $\chi \in S(1,g_{\mathsf T})$ be homogeneous of
  degree 0, with $\supp(\chi\br)\subset \mathscr W$ and let $M\in \N$. Then
  there exist $\tau_0>0$, $\gamma\geq 1,$ $C>0$, $C_M >0$ such that 
  \begin{equation*}
    \Re \Bquad(\Opt(\chi) u)
    \geq C\normsc{\trace(\Opt(\chi) u)}{m-1,r}^2
    -C_M\normsc{\trace(u)}{m-1,-M}^2,
  \end{equation*}
  for $u\in \ovl{\mathscr{S}}(\R^{N}_+)$ and $\gamma\geq\gamma_0,$ $\tau\geq\tau_0,~\eps\in [0,1]$.
\end{proposition}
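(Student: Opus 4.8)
The plan is to reduce the stated inequality for $\Bquad(\Opt(\chi)u)$ to a scalar semi-classical Gårding inequality on the boundary, applied componentwise to the trace vector of $\Opt(\chi)u$. First I would write $v=\Opt(\chi)u$ and expand $\Re\Bquad(v)=\sum_k\Re\inp{A^kv\br}{B^kv\br}_\partial$. By the definition of a boundary differential quadratic form of type $(m-1,r)$, each $A^k=\op(a^k)$ and $B^k=\op(b^k)$ with $a^k,b^k\in\Ssc^{m-1,\bullet}$, so $A^kv\br$ and $B^kv\br$ are finite linear combinations (with tangential pseudo-differential coefficients in $\Psi(\lsct^{\bullet},g_\mathsf{T})$) of the trace derivatives $D^j_{x_d}v\br$, $0\le j\le m-1$. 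Thus $\Re\Bquad(v)$ can be rewritten as $\Re\inp{\mathcal{A}\,\mathbf{w}}{\mathbf{w}}_\partial$ where $\mathbf{w}=(v\br,D_{x_d}v\br,\dots,D_{x_d}^{m-1}v\br)$ is the Cauchy trace vector and $\mathcal{A}=(\mathcal{A}_{ij})$ is an $m\times m$ matrix of tangential pseudo-differential operators whose principal symbol, as an element of $S(\tilde\tau^{0}\lsct^{\bullet},g_\mathsf{T})$ in matrix form, is exactly the Hermitian form whose associated quadratic form is $\z\mapsto\un\Bquad(\y'',x_N=0^+,\z,\z)$ read off via \eqref{eq: symbol z}–\eqref{eq: bilinear symbol-boundary quadratic form}. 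The positivity hypothesis gives, on $\supp(\chi\br)\subset\mathscr W$ and for $|(\xi',\tilde\tau)|\ge R$, the lower bound $\Re\,\mathrm{symb}(\mathcal{A})(\y'')\ge C\,\mathrm{diag}(\lsct^{2(m-1-j+r)})_{0\le j\le m-1}$ in the sense of Hermitian matrices.

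Next I would apply the tangential semi-classical Gårding inequality in the variables $z'=(s,x')$ at fixed $x_d=0^+$. Because $\chi$ is homogeneous of degree $0$ with $\supp(\chi\br)\subset\mathscr W$, on the operator $\Opt(\chi)u$ the symbol $\mathcal{A}$ may be multiplied by a cutoff supported in $\mathscr W$ without changing $\Bquad(\Opt(\chi)u)$ modulo lower-order terms (using the calculus: $\Opt(\chi)^*\mathcal{A}\,\Opt(\chi)=\Opt(\chi)^*\op(\chi_1)\mathcal{A}\,\op(\chi_1)\Opt(\chi)+$ smoothing, with $\chi_1\equiv1$ near $\supp\chi$, $\supp\chi_1\subset\mathscr W$). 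On that support the symbol inequality above holds, so the matrix Gårding inequality (which follows from the scalar one by diagonalizing or by the standard sesquilinear Gårding argument, since $\tilde\tau$ is an admissible order function for $g_\mathsf{T}$ by the propositions quoted above) yields
\begin{equation*}
\Re\inp{\mathcal{A}\,\mathbf{w}}{\mathbf{w}}_\partial\ge \tfrac{C}{2}\sum_{j=0}^{m-1}\bignorm{\Lsct^{m-1-j+r}D^j_{x_d}v\br}{\partial}^2-C'\sum_{j=0}^{m-1}\bignorm{\Lsct^{m-1-j-1}D^j_{x_d}v\br}{\partial}^2,
\end{equation*}
for $\tau\ge\tau_0$, $\gamma\ge\gamma_0$, $\eps\in[0,1]$, with $\tau_0$ possibly enlarged to absorb the region $|(\xi',\tilde\tau)|<R$ (there one uses $\tilde\tau\ge\tau_0$ and that $\lsct\asymp\tilde\tau$ on that region, so the bad set is harmless up to a constant times $\normsc{\trace(v)}{m-1,r}^2$ times $\tau_0^{-1}$, absorbed for $\tau_0$ large; this is exactly the mechanism by which $\tau_0$ gets fixed). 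The first term on the right is $\ge c\,\normsc{\trace(v)}{m-1,r}^2$ and the error is $\lesssim\normsc{\trace(v)}{m-1,r-1}^2$.

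Finally I would iterate the loss of one power of $\lsct$ in the error term: the crude remainder $\normsc{\trace(\Opt(\chi)u)}{m-1,r-1}^2$ is, again by the calculus and the support condition, bounded by $c\,\normsc{\trace(\Opt(\chi)u)}{m-1,r}^2$ (absorbed on the left after shrinking the constant and enlarging $\tau_0$, using $h_{g_\mathsf{T}}^{-1}=(1+\eps\gamma)^{-1}\lsct\ge1$) plus $C_M\normsc{\trace(u)}{m-1,-M}^2$; commuting $\Opt(\chi)$ past $\Lsct^{m-1-j-1}$ and using that $(1-\chi_1)\Opt(\chi)$ is smoothing produces the negative-order tail $\normsc{\trace(u)}{m-1,-M}^2$ for arbitrary $M$. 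Collecting terms gives the claimed estimate. The main obstacle is the bookkeeping at the second step: identifying the matrix symbol $\mathrm{symb}(\mathcal{A})$ of the rewritten form with the Hermitian matrix attached to $\un\Bquad$ requires care with the non-commutativity of the tangential calculus (the coefficients $a^k_j,b^k_j$ are operators, not functions) and with the precise weights $\tilde\tau^{r'},\tilde\tau^{r''}$, $r'+r''=2r$; once the principal-symbol identification is pinned down, the matrix Gårding inequality and the absorption of lower-order terms are routine, but getting the order functions $\lsct^{2(m-1-j+r)}$ to line up exactly with the hypothesis is where one must be meticulous.
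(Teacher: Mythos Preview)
The paper does not prove this proposition; it states explicitly just before Definition~\ref{def: boundary quadratic form} that ``The result we present here without proof can be found in \cite{ML} and \cite{JGL-vol2}.'' So there is no proof in the paper to compare against.

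That said, your outline is the standard argument and is essentially what one finds in those references. Rewriting $\Re\Bquad(v)$ as $\Re\inp{\mathcal A\mathbf w}{\mathbf w}_\partial$ with $\mathbf w$ the trace vector, identifying the principal matrix symbol of $\mathcal A$ with the Hermitian form associated to $\un\Bquad$, microlocalizing via a cutoff $\chi_1\equiv 1$ near $\supp\chi$ supported in $\mathscr W$, and then invoking the matrix tangential G{\aa}rding inequality is precisely the route taken. Two small remarks: the weights $r',r''$ with $r'+r''=2r$ in the definition of the form refer to orders in $\lsct$ (the classes $\Ssc^{m-1,r'}$), not powers of $\tilde\tau$, so your bookkeeping comment should be phrased in those terms; and the region $|(\xi',\tilde\tau)|<R$ is actually empty once $\tau_0\ge R$ since $\tilde\tau\ge\tau_0$, so no separate absorption argument is needed there. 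With these cosmetic adjustments your plan is correct.
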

The following lemma which can be found in \cite[Lemma B.1]{JL}, gives a perfect elliptic estimate.\\
Let $(s^0,x^0)\in (0,S_0)\times\partial\Omega$ and $V$ denotes the neighborhood of $(s^0,x^0)$ in $\R^{N}.$ We set $\mathscr{O}_{\mathsf{T},V}=V\times\R^{N-1}\times [\tau_0,+\infty)\times [1,+\infty)\times [0,1].$\\
\indent Let $\ell(\y)\in S^{m,0}_{\tilde{\tau}}$ with $\y=(s,x, \xi, \tau, \gamma,\eps)$ and $m\geq 1$ be a polynomial in $\xi_N$ with homogeneous coefficients in $(\sigma,\xi', \hat{\tau})$ where $\hat{\tau}=\tilde{\tau}\gamma d_z\psi(s,x)\in\R^{N}$ and let $L=\ell(z,D_z, \tau,\gamma,\eps).$
\begin{lemma}\label{el1}
Let $\U$ be a conic open subset of $\mathscr{O}_{\mathsf{T},V}$. Assume that, for $\ell(\y',\zeta_N)$ viewed as a polynomial in $\zeta_N,$ for $\y'\in\U$
\begin{enumerate}
    \item[$\bullet$] the leading coefficient is $1;$
\item[$\bullet$] all roots of $\ell(\y',\zeta_N)=0$ have negative imaginary part.
\end{enumerate}
Let $\chi(\y')\in S(1,g_{\mathsf{T}})$ be homogeneous of degree zero and such that $\supp(\chi)\subset\U$. Then, for any $M\in\N,$ there exist $C>0,\tau_0>0,\gamma_0\geq 1$ such that
\begin{equation*}
  \Normsc{\Opt(\chi)w}{m,0}
  + \normsc{\trace(\Opt(\chi)w)}{m-1, 1/2}
  \leq C\left( 
  \Norm{L \Opt(\chi)w}{+}
  + \Normsc{w}{m,-M}\right),
\end{equation*}
for $w\in\SbarpN$ and $\tau\geq \tau_0,$ $\gamma\geq \gamma_0,$ $\eps\in [0,1].$ 
\end{lemma}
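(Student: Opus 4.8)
The statement to prove is Lemma \ref{el1}, a "perfect elliptic estimate" for an operator $L$ whose symbol $\ell(\y',\zeta_N)$, viewed as a polynomial in $\zeta_N$, is monic with all roots in the lower half-plane. Let me plan a proof.

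\textbf{Plan.} The plan is to prove this by the classical factorization-of-the-symbol method for elliptic boundary value problems, adapted to the three-parameter semi-classical calculus set up in Section \ref{SB}. The strategy: localize via $\Opt(\chi)$, factor $\ell$ into first-order factors (in $\zeta_N$) with roots of negative imaginary part, build a parametrix for each factor on the half-line $\{x_N > 0\}$ using the fact that such factors are "well-posed" for the forward Cauchy problem, and iterate. The boundary term is controlled by tracking how many $x_N$-derivatives survive at each step.

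Here's how I'd carry it out in order. First, set $w_\chi = \Opt(\chi)w$ and reduce everything to a statement about $w_\chi$; the term $\Normsc{w}{m,-M}$ on the right will absorb all commutator errors since $[\Opt(\chi), \cdot]$ gains a power of $\lsct^{-1}$ (by the calculus of Section \ref{SB}, commutators of operators in $\Psi(\cdot, g_\mathsf T)$ lie in the class with one extra negative order, up to the factor $(1+\eps\gamma)$ which is harmless after choosing $\gamma$ large — actually here one keeps $\gamma \geq \gamma_0$ as in the statement). Second, on the conic support of $\chi$, use that the roots $\zeta_N = \rho_j(\y')$, $j = 1, \dots, m$, of $\ell(\y',\zeta_N) = 0$ satisfy $\Im \rho_j < 0$ and depend smoothly and homogeneously (degree one in $(\zeta', \hat\tau)$) on $\y'$; by compactness of the relevant sphere bundle they satisfy $\Im \rho_j \leq -c\,\lsct$ for some $c > 0$ on $\supp(\chi)$ with $|(\xi',\tilde\tau)|$ large. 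This gives the factorization $\ell = \prod_{j=1}^m (\zeta_N - \rho_j(\y'))$ at the symbol level, hence $L = \Opt(\ell(\cdot, D_{x_N})) $ equals the composition of the first-order operators $D_{x_N} - \Opt(\rho_j)$ modulo $\Psi(\lsct^{m-1}, g_\mathsf T)$-type errors times lower $D_{x_N}$-powers — i.e. modulo operators of class $\Psisc^{m-1, 1}$ acting on $w_\chi$, which are controlled by $\Normsc{w_\chi}{m-1, 1} \lesssim \Normsc{w_\chi}{m, 0}$ but with a small constant after taking $\tilde\tau$ (equivalently $\tau$) large. Third — the core of the argument — prove the $m=1$ case: if $\Im \rho \leq -c\,\lsct$ then for $w \in \SbarpN$,
$$\Normsc{w}{1,0} + \normsc{\trace(w)}{0,1/2} \lesssim \Norm{(D_{x_N} - \Opt(\rho))w}{+} + \Normsc{w}{1,-M}.$$
This is the one-sided (forward-parabolic-type) estimate: it follows from an integration by parts / energy identity in $x_N$ with weight, exploiting $\Im \rho < 0$ to get both the interior $L^2$ control of $\Lsct w$ (via a Gårding-type lower bound, Proposition \ref{prop: boundary form -Gaarding tangentiel} giving positivity of the quadratic form associated with $-\Im\Opt(\rho)$) and, crucially, a \emph{favorable-sign boundary term} $|w_{|x_N=0^+}|_\partial^2$ at the right order $\normsc{\trace(w)}{0,1/2}^2 \asymp |\Lsct^{1/2} w_{|x_N=0^+}|_\partial^2$. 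Fourth, iterate: writing $\ell(\cdot, D_{x_N}) = (D_{x_N} - \Opt(\rho_m)) \circ \ell'(\cdot, D_{x_N}) + (\text{error in } \Psisc^{m-1,1})$ with $\ell'$ monic of degree $m-1$ with the remaining good roots, apply the $m=1$ estimate to $v := \ell'(\cdot, D_{x_N}) w_\chi$ to bound $\Normsc{v}{1,0}$, then invoke the induction hypothesis on $w_\chi$ with forcing $v$ to close the loop, bootstrapping the trace norms $\normsc{\trace(\Opt(\chi)w)}{m-1,1/2}$ from the sequence of $\{D_{x_N}^j w_\chi\}_{j=0}^{m-1}$ traces produced at each stage. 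Collect errors into $C\Normsc{w}{m,-M}$ and fix $\tau_0, \gamma_0$ large enough to absorb the small-constant terms.

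\textbf{Main obstacle.} The hard part will be the $m=1$ one-sided estimate carried out uniformly in the three parameters $(\tau, \gamma, \eps)$ — in particular getting the boundary term with the \emph{correct sign and correct order} $\normsc{\trace(w)}{0,1/2}$. The sign is dictated by $\Im \rho < 0$ (the ``stable'' side), but making the energy argument rigorous requires handling $\Opt(\rho)$, whose symbol $\rho$ is not smooth in $\zeta'$ at $\zeta' = 0$ — this is exactly why $\chi$ is supported in a cone $\U$ away from $\{(\xi',\tilde\tau) = 0\}$ and why one truncates to $|(\xi',\tilde\tau)| \geq R$; one either works with a smoothed/truncated version of $\rho$ and absorbs the difference into the $\Normsc{w}{m,-M}$ term, or invokes the Gårding inequality Proposition \ref{prop: boundary form -Gaarding tangentiel} directly on the boundary quadratic form coming from $\Re\langle (D_{x_N} - \Opt(\rho))w, w\rangle_+$ after an integration by parts in $x_N$. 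Checking that the commutators and symbol-calculus remainders genuinely cost only negative powers of $\lsct$ \emph{uniformly in} $\gamma, \eps$ (using the admissibility of $g_\mathsf T$ and the fact that $\hat\tau = \tilde\tau\gamma d_z\psi$ enters the coefficients in a controlled, $S(\tilde\tau,g_\mathsf T)$ way) is the other place where care is needed, but this is exactly the kind of bookkeeping the calculus of Section \ref{SB} was built to make routine, so I'd cite \cite{JL} for it as the statement of the lemma already does.
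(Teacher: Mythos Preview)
The paper does not actually prove this lemma; it merely states it and cites \cite[Lemma B.1]{JL}. So there is no in-paper proof to compare against. That said, your proposed strategy---factor $\ell$ into first-order factors $\zeta_N-\rho_j(\y')$ with $\Im\rho_j\le -c\,\lsct$ on $\supp(\chi)$, establish the $m=1$ estimate by an energy/multiplier identity that exploits the favorable sign of $\Im\rho$, then induct---is exactly the classical route and is what the proof in \cite{JL} (and in the standard references it draws on) does. Your identification of the main obstacle, namely making the $m=1$ step uniform in $(\tau,\gamma,\eps)$ and handling the non-smoothness of $\rho$ at $\zeta'=0$ by the cutoff $\chi$, is also on target.

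One small sharpening: in the $m=1$ step, pairing $Lw$ with $w$ and invoking G{\aa}rding on $-\Im\rho\ge c\,\lsct$ yields only $\Norm{\Lsct^{1/2}w}{+}^2$, not the full $\Norm{\Lsct w}{+}^2$ you need for $\Normsc{w}{1,0}$. The clean fix is to differentiate $\inp{\Lsct w}{w}_{x_N}$ in $x_N$, substitute $\partial_{x_N}w=i\,\Opt(\rho)w+iLw$, and apply G{\aa}rding to the symbol $\lsct\Im\rho\le -c\,\lsct^2$; integrating over $x_N>0$ then gives precisely
\[
\Norm{\Lsct w}{+}^2+\bignorm{\Lsct^{1/2}w\br}{\partial}^2
\ \lesssim\ \Norm{Lw}{+}^2+\Norm{w}{+}^2,
\]
and $\Norm{D_{x_N}w}{+}$ is recovered from $D_{x_N}w=\Opt(\rho)w+Lw$. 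With that adjustment your plan goes through.
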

\section{The Lopatinski\u{\i}-\v{S}apiro condition}
\subsection{The \LS condition for the bi-Laplace operator}\label{LS:definition}
Let $P=\Delta^2$ with principal symbol $p(x,\omega)$ for
$(x,\omega)\in T^*\Omega$. One defines the following polynomial in
$\omega_d,$
$$p(x,\omega',\omega_d)=p(x,\xi'-\omega_d n_x),$$ for $x\in\partial\Omega,$ $\omega'\in T^*_x\partial\Omega,$ $\omega_d\in\R$ and  where $n_x$ denotes the outward pointing conormal vector at $x$, unitary. Here $x$ and $\omega'$ act as parameters. We denote by $\rho_j(x,\omega'),$ $1\leq j\leq 4$ the complex roots of $p.$ One sets $$p^+(x,\omega',\omega_d)=\prod\limits_{\Im\rho_j(x,\omega')\geq 0}(\omega_d-\rho_j(x,\omega')).$$ Given the boundary differential operators $B_1$ and $B_2$ in a neighborhood of $\partial\Omega,$ with principal symbols $b_j(x,\omega)$, $j=1,2,$ one also sets $b_j(x,\omega',\omega_d)=b_j(x,\omega'-\omega_dn_x).$
\begin{definition}[Lopatinski\u{\i}-\v{S}apiro boundary condition]\label{def: LS}
Let $(x,\omega')\in T^*\partial\Omega$ with $\omega'\neq 0.$ One says that the Lopatinski\u{\i}-\v{S}apiro condition holds for $(P, B_1, B_2)$ at $(x,\omega')$ if for any polynomial function $f(\omega_d)$ with complex coefficients, there exists $c_1,c_2\in\C$ and a polynomial function $h(\omega_d)$ with complex coefficients such that, for all $\omega_d\in\C,$
$$f(\omega_d)=\sum\limits_{1\leq j\leq 2}c_jb_j(x,\omega',\omega_d)+h(\omega_d)p^+(x,\omega',\omega_d).$$
We say that the Lopatinski\u{\i}-\v{S}apiro condition holds for $(P, B_1, B_2)$ at $x\in\partial\Omega$ if it holds at $(x,\omega')$ for all $\omega'\in T^*_x\partial\Omega$ with $\omega'\neq 0.$
\end{definition}
The general boundary differential operators $B_1$ and $B_2$ are then given by
\begin{equation*}
  B_\ell(x,D)
  = \sum\limits_{0\leq j\leq\min(3,k_\ell)}
  B_\ell^{k_\ell-j}(x,D') (i \partial_\nu) ^j,
  \qquad \ell=1,2,
\end{equation*}
with $B_\ell^{k_\ell-j}(x,D')$ differential operators acting in the
tangential variables.  
We denote by $b_1(x,\omega)$ and $b_2(x,\omega)$ the principal symbols of $B_1$ and $B_2$ respectively. For $(x,\omega')\in T^*\partial\Omega,$ we set 
\begin{equation}\label{symbol-au-bord}
  b_\ell(x,\omega',\omega_d)
  =\sum\limits_{0\leq j\leq \min(3,k_\ell)}
  b_\ell^{k_\ell-j}(x,\omega')\omega_d^j,
  \qquad \ell=1,2.
\end{equation}
We recall that the principal symbol of $P$ is given by
$p(x,\omega)=|\omega|^4_\mathsf{g}.$ One thus has 
\begin{equation*}
  p(x,\omega',\omega_d)=p(x,\omega'-\omega_dn_x) =  \big( \omega_d^2 + |\omega'|_\mathsf{g}^2\big)^2.
\end{equation*}
Therefore
$p(x,\omega',\omega_d)=(\omega_d-i|\omega'|_\mathsf{g})^2(\omega_d+i|\omega'|_\mathsf{g})^2$. According
to the above definition we set 
$p^+(x,\omega',\omega_d)=(\omega_d-i|\omega'|_\mathsf{g})^2.$ Thus, the
\LS condition holds at $(x,\omega')$ with $\omega'\neq 0$ if
and only if for any function $f(\omega_d)$ the complex number $i|\omega'|_\mathsf{g}$
is a root of the polynomial function $\omega_d\mapsto
f(\omega_d)-c_1b_1(x,\omega',\omega_d)-c_2b_2(x,\omega',\omega_d)$ and its
derivative for some $c_1,c_2\in\C$. This leads to the
following determinant condition
\begin{lemma}\label{lemma: Lopatinskii bi-laplacian op}  
  Let $P=\Delta^2$ on $\Omega$, $B_1$ and $B_2$ be two boundary differential
  operators. If $x \in \partial\Omega$, $\omega' \in T_x^* \partial\Omega$, with
  $\omega' \neq 0$, the \LS
   condition holds at $(x,\omega')$ if and only if
  \begin{equation}\label{cond1}
    \det
    \begin{pmatrix}
      b_1 &b_2 \\[2pt]
      \partial_{\omega_d} b_1& \partial_{\omega_d} b_2
    \end{pmatrix} (x,\omega',\omega_d=i|\omega'|_\mathsf{g})
    \neq 0.
\end{equation}
\end{lemma}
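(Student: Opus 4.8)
The statement to prove is Lemma~\ref{lemma: Lopatinskii bi-laplacian op}: the \LS condition for $(P=\Delta^2,B_1,B_2)$ at $(x,\omega')$ is equivalent to the nonvanishing of the $2\times 2$ determinant $\det\begin{pmatrix} b_1 & b_2 \\ \partial_{\omega_d}b_1 & \partial_{\omega_d}b_2\end{pmatrix}$ evaluated at $\omega_d = i|\omega'|_{\mathsf g}$.

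My plan: The key structural fact, already laid out in the excerpt, is that $p(x,\omega',\omega_d) = (\omega_d - i|\omega'|_{\mathsf g})^2(\omega_d + i|\omega'|_{\mathsf g})^2$, so $p^+(x,\omega',\omega_d) = (\omega_d - i|\omega'|_{\mathsf g})^2$, a degree-2 polynomial in $\omega_d$ with a double root at $r_0 := i|\omega'|_{\mathsf g}$ (note $r_0 \neq 0$ since $\omega' \neq 0$). By the division algorithm in $\mathbb C[\omega_d]$, the quotient ring $\mathbb C[\omega_d]/(p^+)$ is a 2-dimensional $\mathbb C$-vector space, and the \LS condition says precisely that the images of $b_1(x,\omega',\cdot)$ and $b_2(x,\omega',\cdot)$ span this quotient — equivalently, that the map $\mathbb C^2 \to \mathbb C[\omega_d]/(p^+)$, $(c_1,c_2) \mapsto c_1 b_1 + c_2 b_2 \bmod p^+$, is surjective, hence (by dimension count) an isomorphism.

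Carrying this out: I would identify $\mathbb C[\omega_d]/\big((\omega_d-r_0)^2\big)$ with the Taylor-jet space via the linear map $g \mapsto (g(r_0),\, g'(r_0)) \in \mathbb C^2$; this is an isomorphism because a polynomial lies in $\big((\omega_d-r_0)^2\big)$ iff it vanishes to order $2$ at $r_0$. Under this identification, the spanning condition becomes: the two vectors $(b_1(r_0), \partial_{\omega_d}b_1(r_0))$ and $(b_2(r_0), \partial_{\omega_d}b_2(r_0))$ in $\mathbb C^2$ are linearly independent, which is exactly the nonvanishing of the determinant in \eqref{cond1}. It remains to check that this spanning condition is genuinely equivalent to the \LS condition as stated in Definition~\ref{def: LS}: given arbitrary $f(\omega_d)$, its class $\bar f$ in the quotient can be written as $c_1\bar b_1 + c_2 \bar b_2$ iff $f - c_1 b_1 - c_2 b_2 \in (p^+)$, i.e. iff $f - c_1 b_1 - c_2 b_2 = h\, p^+$ for some polynomial $h$ — precisely the decomposition in Definition~\ref{def: LS}. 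The paper's intermediate remark (that $i|\omega'|_{\mathsf g}$ should be a double root of $f - c_1 b_1 - c_2 b_2$) is just the restatement that $f - c_1 b_1 - c_2 b_2$ vanishes to order $2$ at $r_0$, so the two-conditions-two-unknowns linear system in $(c_1,c_2)$ is solvable for every $f$ iff its matrix — which is exactly \eqref{cond1} — is invertible.

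There is no serious obstacle here; this is elementary linear algebra over $\mathbb C[\omega_d]/(p^+)$ once the factorization of $p$ is in hand. The only point requiring a little care is the direction ``\LS $\Rightarrow$ determinant $\neq 0$'': one must note that it suffices to test the definition on, say, $f(\omega_d) = 1$ and $f(\omega_d) = \omega_d$ (or any basis of the quotient) to force the $2\times2$ system to be solvable for all right-hand sides, hence the matrix invertible; the converse direction is immediate from solving the system and using the division algorithm to produce $h$. I would also remark that the leading coefficients of $b_\ell(x,\omega',\cdot)$ play no role since the decomposition is modulo $p^+$ and we only evaluate jets at $r_0$, so the statement is insensitive to the normalization of $B_1, B_2$.
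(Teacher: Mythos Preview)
Your proposal is correct and takes essentially the same approach as the paper: the paper observes that $p^+=(\omega_d-i|\omega'|_{\mathsf g})^2$ and that the \LS condition amounts to requiring $i|\omega'|_{\mathsf g}$ to be a double root of $f-c_1b_1-c_2b_2$ for suitable $c_1,c_2$, which yields exactly the $2\times2$ system with matrix~\eqref{cond1}. Your quotient-ring and Taylor-jet formulation is just a slightly more formal packaging of the same argument.
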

\begin{remark}\label{RR0}
With the determinant condition and homogeneity, we note that if the
\LS condition holds for $(P,B_1, B_2)$ at $(x,\omega')$ it also holds
in a conic neighborhood of $(x,\omega')$ by continuity. If it holds at $x\in\Omega,$ it also holds in a neighborhood of $x$.
\end{remark}
\subsection{Formulation in normal geodesic coordinates}\label{normal-geodesic}
Near a boundary point $x\in\partial\Omega$, we shall use normal
geodesic coordinates. Then the principal symbols of $\Delta$ and
$\Delta^2$ are given by $\xi_d^2+r(x,\xi')$ and
$(\xi_d^2+r(x,\xi'))^2$ respectively, where $r(x,\xi')$ is the
principal symbol of a tangential differential elliptic operator
$R(x,D')$ of order 2, with
$$r(x,\xi')=\sum\limits_{1\leq i,j\leq
  d-1}\mathsf{g}^{ij}(x)\xi'_i\xi'_j \ \ \text{and} \ \  r(x,\xi')\geq C|\xi'|^2.$$
Here $\mathsf{g}^{ij}$ is the inverse of the metric $\mathsf{g}_{ij}.$ Below, we shall
often write $|\xi'|^2_x=r(x,\xi')$ and we shall also write
$|\xi|^2_x=\xi_d^2+r(x,\xi'),$ for $\xi=(\xi',\xi_d).$

If $b_1(x,\xi)$ and $b_2(x,\xi)$ are the principal symbols of the
boundary operators $B_1$ and $B_2$ in the normal geodesic
coordinates then the \LS condition for $(P, B_1, B_2)$ with $P =
\Delta^2$ at $(x,\xi')$ reads
\begin{equation}\label{LS-bi}
    \det
    \begin{pmatrix}
      b_1&b_2\\[2pt]
      \partial_{\xi_d}b_1& \partial_{\xi_d}b_2
    \end{pmatrix}(x,\xi',\xi_d=i |\xi'|_x)
    \neq 0,
  \end{equation}
  if $|\xi'|_x \neq 0$ according to Lemma~\ref{lemma: Lopatinskii bi-laplacian op}. If the \LS condition holds at some $x^0$,
  because of homogeneity, there exists $C_0>0$ such that
  \begin{equation}\label{eq: homog formul LS condition}
    \left|\det
    \begin{pmatrix}
      b_1 &b_2\\[2pt]
      \partial_{\xi_d}b_1& \partial_{\xi_d}b_2 
    \end{pmatrix}\right|(x^0,\xi',\xi_d=i |\xi'|_x)
    \geq C_0 |\xi'|_x^{k_1+k_2 -1},
    \qquad \xi' \in \R^{d-1}.
  \end{equation}
  \subsection{Boundary operators yielding symmetry}
\label{sec: Examples  of boundary operators yielding symmetry}

We give some examples of pairs of boundary operators $B_1, B_2$ that
\begin{enumerate}
\item fulfill the \LS condition,
\item yield symmetry for the bi-Laplace operator $P = \Delta^2$, that is, 
\begin{align*}
  \inp{P u }{v}_{L^2(\Omega)} =  \inp{u }{P v}_{L^2(\Omega)} 
\end{align*}
for $u,v\in H^4(\Omega)$ such that $B_j u_{|\partial\Omega} = B_j
v_{|\partial\Omega} =0$, $j=1,2$.
\end{enumerate}

We first recall that following Green formula
\begin{align}
\label{eq: Green formula}
  \inp{\Delta u }{v}_{L^2(\Omega)} 
  =  \inp{u }{\Delta  v}_{L^2(\Omega)} 
  + \inp{\partial_{n} u\bd}{v\bd}_{L^2(\partial\Omega)}  
  - \inp{u\bd}{\partial_{n} v\bd}_{L^2(\partial\Omega)}, 
\end{align}
which applied twice gives
$\inp{P u }{v}_{L^2(\Omega)} 
  =  \inp{u }{P  v}_{L^2(\Omega)} + T(u,v)$
with \begin{align}
  \label{eq: Green formula biLaplace}
  T(u,v) &= \inp{\partial_{n} \Delta  u\bd}{v\bd}_{L^2(\partial\Omega)}  
  - \inp{\Delta  u\bd}{\partial_{n} v\bd}_{L^2(\partial\Omega)}\notag\\
  &\quad+ \inp{\partial_{n} u\bd}{\Delta  v\bd}_{L^2(\partial\Omega)}  
  - \inp{u\bd}{\partial_{n} \Delta  v\bd}_{L^2(\partial\Omega)}.
\end{align}
Using normal geodesic coordinates in a \nhd of the whole boundary
$\partial\Omega$ allows one to write $\Delta = \partial_n^2 + \Delta'$
where $\Delta'$ is the resulting Laplace operator on the boundary,
that is, associated with the trace of the metric on
$\partial\Omega$. Since $\Delta'$ is selfadjoint on $\partial\Omega$ 
this allows one to write formula \eqref{eq: Green formula biLaplace}
in the alternative forms
\begin{align}
  \label{eq: Green formula biLaplace-bis}
  T(u,v) &= \inp{\partial_{n}^3 u\bd}{v\bd}_{L^2(\partial\Omega)}  
  - \inp{(\partial_{n}^2 + 2 \Delta') u\bd}{\partial_{n}v\bd}_{L^2(\partial\Omega)}
\notag\\
  &\quad 
    + \inp{\partial_{n} u\bd}{ (\partial_{n}^2 + 2 \Delta')v\bd}_{L^2(\partial\Omega)}  
  - \inp{u\bd}{\partial_{n}^3  v\bd}_{L^2(\partial\Omega)},
\end{align}
or 
\begin{align}
  \label{eq: Green formula biLaplace-ter}
  T(u,v) &= 
  \inp{(\partial_{n}^3 
  + 2 \Delta'\partial_{n})  u\bd}{v\bd}_{L^2(\partial\Omega)}  
  - \inp{\partial_{n}^2  u\bd}{\partial_{n}v\bd}_{L^2(\partial\Omega)}
  \notag\\
  &\quad 
   + \inp{\partial_{n} u\bd}{ \partial_{n}^2 v\bd}_{L^2(\partial\Omega)}  
  - \inp{u\bd}{(\partial_{n}^3 
           + 2 \Delta'\partial_{n})  v\bd}_{L^2(\partial\Omega)}.
\end{align}
With this computations, one can provide a list of examples in which the boundary operators yield symmetry and fulfill the \LS condition. This can be found in \cite[Section 3.5]{Jer-Em}.\\
We start our list of examples with the most basics ones.
\begin{example}[Hinged boundary conditions]
\label{ex: hinged boundary conditions}
This type of conditions is given by $B_1 u\bd = u\bd$ and $B_2 u\bd
= \Delta u\bd$. With \eqref{eq: Green formula biLaplace} one finds
$T(u,v) =0$ in the case of homogeneous conditions, hence symmetry. We have
$b_1(x,\xi',\xi_d)=1$ and $b_2(x,\xi',\xi_d)=
-\xi_d^2$. It follows that 
\begin{equation*}
    \det
    \begin{pmatrix}
      b_1 &b_2 \\[2pt]
      \partial_{\xi_d}b_1& \partial_{\xi_d}b_2
    \end{pmatrix} (x,\xi',\xi_d=i|\xi'|_x)
    =
     \det
    \begin{pmatrix}
     1 &|\xi'|_x \\[2pt]
     0&-2i|\xi'|_x
    \end{pmatrix} =-2i|\xi'|_x \neq 0,
\end{equation*}
if $\xi'\neq 0$ and thus the \LS condition holds by Lemma~\ref{lemma: Lopatinskii bi-laplacian op}.
\end{example}
\begin{example}[Clamped  boundary conditions]
This type of conditions refers to $B_1 u\bd = u\bd$ and $B_2 u\bd
= \partial_n u\bd$. With \eqref{eq: Green formula biLaplace-bis}  one finds
$T(u,v) =0$ in the case of homogeneous conditions, hence symmetry. With the notation of Section~\ref{LS:definition} this gives $b_1(x,\xi',\xi_d)=1$ and $b_2(x,\xi',\xi_d)=-i\xi_d$. It follows that
We have
\begin{equation*}
    \det
    \begin{pmatrix}
      b_1 &b_2 \\[2pt]
      \partial_{\xi_d}b_1& \partial_{\xi_d}b_2
    \end{pmatrix} (x,\xi',\xi_d=i|\xi'|_x)
    =
     \det
    \begin{pmatrix}
     1 &|\xi'|_x \\[2pt]
     0&-i
    \end{pmatrix} =-i\neq 0.
\end{equation*}
Thus the \LS condition holds by Lemma~\ref{lemma: Lopatinskii bi-laplacian op}.
\end{example}

\begin{examples}[More examples]
$\phantom{-}$
\begin{enumerate}
\item Take  $B_1 u\bd =\partial_n  u\bd$ and $B_2 u\bd
= \partial_n \Delta u\bd$. With these boundary conditions the bi-Laplace operator is precisely
the square of the Neumann-Laplace operator. The symmetry property is
immediate and so is the \LS condition.

\item Take $B_1 u\bd = (\partial_n^2 + 2 \Delta') u\bd$ and $B_2 u\bd
= \partial_n^3 u\bd$. With \eqref{eq: Green formula biLaplace-bis} one finds
$T(u,v) =0$ in the case of homogeneous conditions, hence symmetry.

We have $b_1(x,\xi',\xi_d)=-\xi_d^2 - 2 |\xi'|_x^2$ and
$b_2(x,\xi',\xi_d)=i\xi_d^3$ and
\begin{align*}
   \det
    \begin{pmatrix}
      b_1 &b_2 \\[2pt]
      \partial_{\xi_d}b_1& \partial_{\xi_d}b_2
    \end{pmatrix} (x,\xi',\xi_d=i|\xi'|_x)
    &=
     \det
    \begin{pmatrix}
     -|\xi'|_x^2 &|\xi''|^3_x \\[2pt]
     -2 i |\xi'|_x&-3i|\xi'|^2_x
    \end{pmatrix} \\
    &=5 i |\xi'|^4_x \neq 0,
\end{align*}
if $\xi'\neq 0$ and thus the \LS condition holds by Lemma~\ref{lemma: Lopatinskii bi-laplacian op}.
\item Take $B_1 u\bd = \partial_n u\bd$ and $B_2 u\bd
= (\partial_n^3+ A')  u\bd$, with $A'$ a symmetric differential
operator of order less than or equal to three on
$\partial \Omega$, with homogeneous principal symbol $a'(x,\xi')$ such that
$a'(x,\xi') \neq 2 |\xi'|_x ^3$ for $\xi' \neq 0.$

With \eqref{eq: Green formula biLaplace-bis} one
finds
\begin{align*}
T(u,v)  =  \inp{-A' u\bd}{v\bd}_{L^2(\partial\Omega)}  
  + \inp{u\bd}{A' v\bd}_{L^2(\partial\Omega)} =0,
\end{align*}
in the case of homogeneous conditions, hence symmetry for $P$.

We have $b_1(x,\xi',\xi_d)=-i \xi_d$ and
$b_2(x,\xi',\xi_d)=i\xi_d^3 + a'(x,\xi')$ with $a'$ the
principal symbol of $A'$.
\begin{align*}
   \det
    \begin{pmatrix}
      b_1 &b_2 \\[2pt]
      \partial_{\xi_d}b_1& \partial_{\xi_d}b_2
    \end{pmatrix} (x,\xi',\xi_d=i|\xi'|_x)
    &=
     \det
    \begin{pmatrix}
     |\xi'|_x &|\xi'|^3_x + a'(x,\xi')\\[2pt]
    - i &-3i|\xi'|^2_x
    \end{pmatrix} \\
    &= i \big( a'(x,\xi') -2  |\xi'|^3_x\big)\neq 0,
\end{align*}
if $\xi'\neq 0$ since $a'(x,\xi') \neq 2 |\xi'|^3_x$  by
assumption implying
that the \LS condition holds by Lemma~\ref{lemma: Lopatinskii bi-laplacian op}.
\item 
 Take  $B_1 u\bd =u\bd$ and $B_2 u\bd
=(\partial_n^2+ A' \partial_n) u\bd$  with $A'$ a symmetric  differential
operator of order less than or equal to one on
$\partial \Omega$, with homogeneous principal symbol $a'(x,\xi')$ such that
$a'(x,\xi') \neq - 2 |\xi'|_x$ for $\omega' \neq 0.$
This is  a refinement of the  boundary conditions given
in Example~\ref{ex: hinged boundary conditions} above.

With \eqref{eq: Green formula biLaplace-bis} one
finds
\begin{align*}
  T(u,v)  
  =  \inp{A' \partial_n u\bd}{\partial_n v\bd}_{L^2(\partial\Omega)} 
  + \inp{\partial_n u\bd}{- A' \partial_n v\bd}_{L^2(\partial\Omega)}  
  =0,
\end{align*}
in the case of homogeneous conditions, hence symmetry for $P$.\\
We have $b_1(x,\xi', \xi_d)=1$ and
$b_2(x,\xi',\xi_d)=-\xi_d^2 -i \xi_d a'(x,\xi')$ with $a'$ the
principal symbol of $A'$.
\begin{align*}
    \det
    \begin{pmatrix}
      b_1 &b_2 \\[2pt]
      \partial_{\xi_d}b_1& \partial_{\xi_d}b_2
    \end{pmatrix} (x,\xi',\xi_d=i|\xi'|_x)
    &=
     \det
    \begin{pmatrix}
     1&|\xi'|^2_g + |\xi'|_x  a'(x,\xi')\\[2pt]
    0 &-2i|\xi'|_x - i a'(x,\xi')
    \end{pmatrix} \\
    &= - i \big( a'(x,\xi') + 2  |\xi'|_x\big)\neq 0,
\end{align*}
if $\xi'\neq 0$ since $a'(x,\xi') \neq - 2 |\xi'|_x$ by
assumption implying
that the \LS condition holds by Lemma~\ref{lemma: Lopatinskii bi-laplacian op}.
\item Take $B_1 u\bd = (\partial_n^2  + A' \partial_n) u\bd$ and $B_2 u\bd
= (\partial_n^3+ 2 \partial_n\Delta')  u\bd$,  with $A'$ a symmetric  differential
operator of order less than or equal to one on
$\partial \Omega$, with homogeneous principal symbol $a'(x,\xi')$ such that
$2 a'(x,\xi') \neq - 3 |\xi'|_x $ for $\xi' \neq 0.$
With \eqref{eq: Green formula biLaplace-ter} one
finds
\begin{align*}
  T(u,v)  
  =  \inp{A' \partial_n u\bd}{\partial_n v\bd}_{L^2(\partial\Omega)} 
  + \inp{\partial_n u\bd}{- A' \partial_n v\bd}_{L^2(\partial\Omega)}  
  =0,
\end{align*}
in the case of homogeneous conditions, hence symmetry for $P$.\\
We have $b_1(x,\xi',\xi_d)=-\xi_d^2 -i \xi_d a'(x,\xi')$ and
$b_2(x,\xi',\xi_d)=i\xi_d^3 + 2 i \xi_d |\xi'|_x^2$ and
\begin{align*}
    \det
    \begin{pmatrix}
      b_1 &b_2 \\[2pt]
      \partial_{\xi_d}b_1& \partial_{\xi_d}b_2
    \end{pmatrix} (x,\xi',\xi_d=i|\xi'|_x)
    &=
     \det
    \begin{pmatrix}
     |\xi'|^2_x + |\xi'|_x   a'(x,\xi')&-|\xi'|^3_x \\[2pt]
     -2i|\xi'|_x - i a'(x,\xi')&-i|\xi'|^2_x
    \end{pmatrix} \\
    & =- i |\xi'|^3_x \big( 2 a'(x,\xi') + 3 |\xi'|_x\big) \neq 0,
\end{align*}
if $\xi'\neq 0$ since $2 a'(x,\xi') + 3 |\xi'|_x  \neq 0$ by
assumption implying that  the \LS condition holds by Lemma~\ref{lemma: Lopatinskii bi-laplacian op}.
\end{enumerate}
\end{examples}

\subsection{Some properties of the bi-Laplace operator}
Let $(\Pell, D(\Pell))$ be the unbounded operator on $L^2(\Omega),$ with
\begin{align}
  \label{eq: domain P intro}
  D(\Pell) = \big\{ u \in H^4(\Omega); \ B_1
  u_{|\partial \Omega}
  = B_2 u_{|\partial \Omega}=0\big\},
\end{align}
and given by $\Pell u = \Delta^2 u$ for $u \in D(\Pell)$.
The boundary operators $B_1$
and $B_2$ of orders $k_j$, $j=1,2$, less than or equal to $3$ in the normal direction are chosen so that
\begin{enumerate}
\item[(i)] 
  the \LS condition of Definition~\ref{def: LS} is fulfilled for $(P,
  B_1, B_2)$ on $\partial\Omega$;
\item[(ii)] 
  the operator $P$ is symmetric under homogeneous boundary
  conditions, that is, 
  \begin{align}
    \label{eq: symmetry property}
    \inp{Pu }{v}_{L^2(\Omega)} = \inp{u}{P v}_{L^2(\Omega)}, 
  \end{align}
  for $u, v \in H^4(\Omega)$ such that $B_j u_{|\partial \Omega} = B_j v_{|\partial \Omega} = 
  0$ on $\partial\Omega$, $j=1,2$. 
 \end{enumerate}
With the assumed \LS condition the operator 
\begin{align}
  \label{eq: Fredholm operator}
  L: H^4 (\Omega) &\to L^2(\Omega) \oplus H^{7/2-k_1} (\partial \Omega)
  \oplus H^{7/2-k_2} (\partial \Omega), \notag\\
  u & \mapsto (Pu, B_1 u_{|\partial \Omega} , B_2 u_{|\partial \Omega}),
\end{align}
is Fredholm.

\begin{enumerate}
  \item[(iii)]
We shall further assume that the Fredholm  index of the operator $L$ is zero.
\end{enumerate}

The previous symmetry property gives
$\inp{Pu}{u}_{L^2(\Omega)} \in \R$. We further assume the following
nonnegativity property:
\begin{enumerate}
\item[(iv)] For $u \in H^4(\Omega)$ such that
  $B_j u_{|\partial \Omega} = 0$ on $\partial\Omega$, $j=1,2$ one has
  \begin{align}
    \label{eq: nonnegativity assumption P}
    \inp{Pu}{u}_{L^2(\Omega)}\geq 0.
    \end{align}
  \end{enumerate}
Associated with $P$ and the boundary operators $B_1$
and $B_2$ is the operator $(\Pell, D(\Pell))$ on $L^2(\Omega)$, with domain
\begin{align*}
  D(\Pell) = \big\{ u \in L^2(\Omega); \ P u \in L^2(\Omega), \ B_1
  u_{|\partial \Omega}
  = B_2 u_{|\partial \Omega}=0\big\},
\end{align*}
and given by $\Pell u = P u \in  L^2(\Omega)$ for $u \in  D(\Pell)$.
The definition of $D(\Pell)$ makes sense since having $P u \in
L^2(\Omega)$ for $u \in L^2(\Omega)$ implies that the traces $\partial_\nu^k
u_{|\partial \Omega}$ are well defined for $k=0, 1,2,3$. 

Since the \LS condition holds on $\partial \Omega$ one has $D(\Pell)
\subset H^4(\Omega)$ (see for instance Theorem~20.1.7 in
\cite{Hoermander:V3}) and thus one can also write $D(\Pell)$ as in
\eqref{eq: domain P intro}. 
From the assumed nonnegativity in \eqref{eq: nonnegativity assumption P} above one finds that
$\Pell + \id$ is injective. Since the operator
\begin{align*}
  L': H^4 (\Omega) &\to L^2(\Omega) \oplus H^{7/2-k_1} (\partial \Omega)
  \oplus H^{7/2-k_2} (\partial \Omega)\\
  u & \mapsto (Pu +u, B_1 u_{|\partial \Omega} , B_2 u_{|\partial \Omega})
\end{align*}
is Fredholm and has the same zero index as $L$ defined in \eqref{eq:
  Fredholm operator}, one finds that $L'$ is surjective. Thus
$\range(\Pell + \id) = L^2(\Omega)$.
One thus concludes that $\Pell$ is maximal monotone. 
From the assumed symmetry property \eqref{eq: symmetry property} and
one finds that   $\Pell$ is selfadjoint, using
that a symmetric maximal monotone operator is selfadjoint (see for
instance Proposition~7.6 in \cite{Brezis:11}).

The resolvent of $\Pell +\id$ being compact on $L^2(\Omega)$,  $\Pell$ has a sequence of
eigenvalues with finite multiplicities. With the assumed nonnegativity
\eqref{eq: nonnegativity assumption P}   they take the form of a sequence 
\begin{align*}
0\leq \mu_0 \leq \mu_1 \leq \cdots \leq \mu_k \leq \cdots,~~~\text{with}~~\Pell\phi_j=\mu_j\phi_j~~\text{and}~~\|\phi_j\|^2_{L^2(\Omega)}=1.
\end{align*}
that grows to $+\infty$. Associated with this sequence is
$(\phi_j)_{j \in \N}$ a Hilbert basis of $L^2(\Omega)$.\\
\\
Well-posedness for the parabolic system (\ref{eq: parabolic}) is shown in \cite[Corollary 1.10]{JL} and we recall it here.
\begin{corollary}
The operator $(\Pell,D(\Pell))$ generates an analytic $C_0-$semigroup $S(t)$ on $L^2(\Omega).$ For $T>0,$ $y_0\in L^2(\Omega)$, and $f\in L^2(0,T; H^{-2}(\Omega)),$ there exists a unique solution 
$$y\in L^2(0,T; D(\Pell))\cap \mathscr{C}([0,T]; L^2(\Omega))\cap H^1(0,T; H^{-2}(\Omega)),$$ given by $y(t)=S(t)y_0+\int_{0}^tS(t-s)f(s)ds,$ such that
$$\partial_ty+\Delta^2y=f~~~~\text{for a.e.}~t\in(0,T),~~~y_{|t=0}=y_0.$$
\end{corollary}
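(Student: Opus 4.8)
\medskip
\noindent\textbf{Proof proposal.} The statement is recorded in \cite[Corollary~1.10]{JL}; here is how I would establish it from the facts collected above. The plan is to deduce everything from the single structural property, already proved, that $(\Pell,D(\Pell))$ is self-adjoint and nonnegative on $L^2(\Omega)$ with compact resolvent. First I would observe that $-\Pell$ is then a nonpositive self-adjoint operator, hence maximal dissipative, so by the Lumer--Phillips theorem (equivalently, by the spectral theorem) it generates a $C_0$-semigroup of contractions $S(t)=e^{-t\Pell}$ on $L^2(\Omega)$; since a self-adjoint operator bounded below is sectorial of half-angle arbitrarily close to $\pi/2$, this semigroup is moreover bounded analytic on a sector. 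This gives the first assertion together with the homogeneous solution $y(t)=S(t)y_0\in\mathscr{C}([0,\infty);L^2(\Omega))\cap\mathscr{C}^{\infty}((0,\infty);D(\Pell))$.

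Next I would handle the source term on the extrapolation scale of $\Pell$. Set $V=D\big((\Pell+\id)^{1/2}\big)$ with its graph norm and let $V^{*}$ be its antidual for the $L^2(\Omega)$ pairing, so that $\Pell+\id$ extends to an isometric isomorphism $V\to V^{*}$ and $-\Pell$ generates on $V^{*}$ an analytic semigroup extending $S(t)$. Because the \LS condition yields $D(\Pell)\subset H^4(\Omega)$ (Theorem~20.1.7 in \cite{Hoermander:V3}, as used above), interpolation would identify $V$ with a closed subspace of $H^2(\Omega)$ and $V^{*}$ with the matching space of distributions, comparable to $H^{-2}(\Omega)$, with $L^2(\Omega)=[V,V^{*}]_{1/2}$ the associated trace space. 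Then I would invoke de Simon's maximal $L^2$-regularity theorem for analytic semigroups on a Hilbert space, applied on $V^{*}$: for $T>0$, $y_0\in L^2(\Omega)$ and $f\in L^2(0,T;H^{-2}(\Omega))$, the Duhamel formula $y(t)=S(t)y_0+\int_0^t S(t-s)f(s)\,ds$ defines the unique strong solution, lying in $L^2(0,T;V)\cap H^1(0,T;V^{*})\hookrightarrow\mathscr{C}([0,T];L^2(\Omega))$ and satisfying $\partial_t y+\Delta^2 y=f$ a.e.\ on $(0,T)$ with $y_{|t=0}=y_0$; translating back through the Sobolev identifications gives the stated regularity class. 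Uniqueness I would get from the energy identity $\tfrac12\tfrac{d}{dt}\Norm{y(t)}{L^2(\Omega)}^2=-\dup{\Pell y(t)}{y(t)}\le 0$, obtained by testing $\partial_t y=-\Pell y$ against $y$ in the $V^{*}$--$V$ duality and using the nonnegativity \eqref{eq: nonnegativity assumption P}.

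The easy parts are the generation and analyticity of $S(t)$, which are immediate from self-adjointness, and the uniqueness estimate. The main obstacle is the identification of the interpolation--extrapolation spaces $D\big((\Pell+\id)^{1/2}\big)$ and its dual with genuine Sobolev spaces $H^{\pm2}(\Omega)$ adapted to the boundary conditions: this is precisely where the \LS condition and the $H^4$-regularity of $D(\Pell)$ are needed, and it is also what makes de Simon's theorem usable on $V^{*}$. If one wished to sidestep this, the whole construction could be run abstractly on the scale $D(\Pell^{\theta})$, with the translation into Sobolev indices performed only at the very end.
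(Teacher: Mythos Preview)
The paper does not give its own proof of this corollary; it merely records the statement and refers to \cite[Corollary~1.10]{JL}. So there is nothing to compare against beyond the citation. Your outline is the standard route to such a result and is sound in spirit: self-adjointness and nonnegativity of $\Pell$ immediately yield an analytic contraction semigroup via the spectral theorem (or Lumer--Phillips), and de~Simon's maximal $L^2$-regularity on the extrapolation scale $V^*$, with $V=D((\Pell+\id)^{1/2})$, handles the inhomogeneous problem and delivers the continuous embedding into $\mathscr{C}([0,T];L^2(\Omega))$.

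One point deserves care, however. Your argument on $V^*$ naturally produces $y\in L^2(0,T;V)\cap H^1(0,T;V^*)$, that is, roughly $y\in L^2(0,T;H^2)\cap H^1(0,T;H^{-2})$. This does not match the stated conclusion $y\in L^2(0,T;D(\Pell))$, which would mean $L^2(0,T;H^4)$ and would require $f\in L^2(0,T;L^2(\Omega))$ by maximal regularity on $L^2(\Omega)$ itself. Your sentence ``translating back through the Sobolev identifications gives the stated regularity class'' therefore glosses over a genuine mismatch between what your method yields and what the corollary asserts. Either the statement as copied from \cite{JL} carries a misprint (one expects $L^2(0,T;V)$ rather than $L^2(0,T;D(\Pell))$ when the forcing lies only in $H^{-2}$), or an additional argument is needed; in any case this is a feature of the statement rather than a flaw in your strategy, and you are right to flag the identification of $V$ and $V^*$ with concrete Sobolev spaces as the delicate step.
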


\subsection{The \LS condition for the augmented operator}
We consider the augmented operator $Q=D_s^4+\Delta ^2,$ with $s\in\R.$ Let $q$ denote the principal symbol of $Q.$ We have
 $$q(x,\sigma,\xi',\xi_d)=\sigma^4+(\xi^2_d+|\xi'|^2_x)^2=(\xi^2_d-i\sigma^2+|\xi'|^2_x)(\xi^2_d+i\sigma^2+|\xi'|^2_x).$$ 
  We look for the roots of $q(x,\sigma,\xi',\xi_d)$ as a polynomial in $\xi_d.$ We recall that for a given complex number $z=a+ib,$ the square root of $z$ is given by:
  \begin{equation*}
\pm \left(\sqrt{\frac{m+a}{2}}+i\sqrt{\frac{m-a}{2}}\right)~~\text{if}~b>0,~~\pm \left(\sqrt{\frac{m+a}{2}}-i\sqrt{\frac{m-a}{2}}\right)~~\text{if}~b<0,
  \end{equation*}
  where $m=\sqrt{a^2+b^2}=|z|.$ Consequently the roots of $q(x,\sigma,\xi',\xi_d)$ are
  \begin{equation*}\label{augmrac1}
      \pm \left(\sqrt{\frac{\sqrt{\sigma^4+|\xi'|^4_x}-|\xi'|^2_x}{2}}+i\sqrt{\frac{\sqrt{\sigma^4+|\xi'|^4_x}+|\xi'|^2_x}{2}}\right)
  \end{equation*}
  \begin{equation*}\label{augmrac2}
      \pm \left(\sqrt{\frac{\sqrt{\sigma^4+|\xi'|^4_x}-|\xi'|^2_x}{2}}-i\sqrt{\frac{\sqrt{\sigma^4+|\xi'|^4_x}+|\xi'|^2_x}{2}}\right).
  \end{equation*}
  Let $\rho_1$ and $\rho_2$ denote the roots with positive imaginary part, then
  \begin{equation}\label{rac1}
\rho_1 :=\rho_1(x,\sigma,\xi')= -\sqrt{\frac{\sqrt{\sigma^4+|\xi'|^4_x}-|\xi'|^2_x}{2}}+i\sqrt{\frac{\sqrt{\sigma^4+|\xi'|^4_x}+|\xi'|^2_x}{2}}
  \end{equation}
  \begin{equation}\label{rac2}
\rho_2:=\rho_2(x,\sigma,\xi')= \sqrt{\frac{\sqrt{\sigma^4+|\xi'|^4_x}-|\xi'|^2_x}{2}}+i\sqrt{\frac{\sqrt{\sigma^4+|\xi'|^4_x}+|\xi'|^2_x}{2}}.
  \end{equation}
 We observe that $\rho_j(x,\lambda\sigma,\lambda\xi')=\lambda\rho_j(x,\sigma,\xi')$ for $j=1$ or $2$, then the roots $\rho_j$ are homogeneous of degree one in the variable $(\sigma,\xi').$
  \begin{remark}
  The roots $\rho_1$ and $\rho_2$ are equals if and only if $\sigma=0.$
  \end{remark}
  Set $q^+(x,\sigma,\xi',\xi_d)=(\xi_d-\rho_1)(\xi_d-\rho_2).$ According to Definition \ref{def: LS}, the \LS condition is satisfies for $(Q, B_1, B_2)$ at the point $(x,\xi',\sigma)$ with $(\xi',\sigma)\neq( 0,0)$ if for every polynomial function $f(\xi_d)$, there exist $c_1,c_2\in\mathbb{C}$ and a polynomial function $h(\xi_d)$ such that $$f(\xi_d)=c_1b_1(x,\xi',\xi_d)+c_2b_2(x,\xi',\xi_d)+h(\xi_d)(\xi_d-\rho_1)(\xi_d-\rho_2).$$
  Therefore the \LS condition holds if the complex numbers $\rho_1$ and $\rho_2$ are roots of the polynomial function $f(\xi_d)-c_1b_1(x,\xi',\xi_d)-c_2b_2(x,\xi',\xi_d).$ This leads to the following determinant condition 
  \begin{equation}\label{Lopaugm1}
    \det
    \begin{pmatrix}
      b_1(x,\xi', \xi_d=\rho_1)&b_2(x, \xi',\xi_d=\rho_1)\\[2pt]
      b_1(x, \xi',\xi_d=\rho_2)& b_2(x, \xi',\xi_d=\rho_2)
    \end{pmatrix}
    \neq 0.
\end{equation}
\begin{remark}\label{homoge}
We have for $\ell=1$ or $2$ $$b_{\ell}(x,\lambda\xi',\rho_{\ell}(x,\lambda\sigma,\lambda\xi'))=\lambda^{k_{\ell}}b_{\ell}(x,\xi',\rho_{\ell}(x,\sigma,\xi'))$$ and 
$$b_{\ell}(x,\lambda\frac{\xi'}{\lambda},\rho_{\ell}(x,\lambda\frac{\sigma}{\lambda},\lambda\frac{\xi'}{\lambda}))=\lambda^{k_{\ell}}b_{\ell}(x,\frac{\xi'}{\lambda},\rho_{\ell}(x,\frac{\sigma}{\lambda},\frac{\xi'}{\lambda})).$$ But writing $$\frac{\xi'}{\lambda}+\frac{\sigma}{\lambda}=\underbrace{\frac{\xi'}{(|\xi'|_x^4+\sigma^4)^{1/4}}}_{:=X}+\underbrace{\frac{\sigma}{(|\xi'|_x^4+\sigma^4)^{1/4}}}_{:=Y}$$ with $\lambda=(|\xi'|_x^4+\sigma^4)^{1/4},$ we have $|X|^4+Y^4=1.$ 
\end{remark}
If the \LS condition holds at some $(x^0,\sigma^0,\xi^0)$ where $\sigma^0\neq 0$,
  because of homogeneity, there exists $C_0>0$ such that
  \begin{equation}\label{eq: homog formul LS condition 1}
    \left| \det
    \begin{pmatrix}
      b_1(x,\xi', \xi_d=\rho_1)&b_2(x, \xi',\xi_d=\rho_1)\\[2pt]
      b_1(x, \xi',\xi_d=\rho_2)& b_2(x, \xi',\xi_d=\rho_2)
    \end{pmatrix}\right|
    \geq C_0 \Lambda^{k_1+k_2 },
  \end{equation}
  with $\Lambda=(\sigma^4+|\xi'|^4_x)^{1/4}=\frac{1}{2}(|\rho_1|+|\rho_2|)$ in a conic neighborhood $\mathscr{U}$ of $(x^0,\sigma^0,\xi^0).$

\begin{proposition}\label{lien-LS}
Let $(x,\xi')\in T^*\partial\Omega\cong\partial\Omega\times\R^{d-1}$ and $\sigma\in \R$ with $(\sigma,\xi')\neq (0,0).$ Suppose that the \LS condition holds for $(P,B_1, B_2)$ at $(x,\xi').$ Then the \LS condition also holds true for $(Q, B_1, B_2)$ at $(x,\sigma,\xi')$ for $\sigma$ near zero.
\end{proposition}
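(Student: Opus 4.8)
The plan is to reduce the statement to the two determinant criteria already established. By Lemma~\ref{lemma: Lopatinskii bi-laplacian op} and its normal geodesic reformulation \eqref{LS-bi}, the \LS condition for $(P,B_1,B_2)$ at $(x,\xi')$ is equivalent to
$$
D_0 := \det
\begin{pmatrix}
  b_1 & b_2 \\
  \partial_{\xi_d} b_1 & \partial_{\xi_d} b_2
\end{pmatrix}(x,\xi',\xi_d = i|\xi'|_x) \neq 0,
$$
while, for $\sigma \neq 0$, the \LS condition for $(Q,B_1,B_2)$ at $(x,\sigma,\xi')$ is equivalent to \eqref{Lopaugm1}, that is,
$$
D(\sigma) := \det
\begin{pmatrix}
  b_1(x,\xi',\rho_1) & b_2(x,\xi',\rho_1) \\
  b_1(x,\xi',\rho_2) & b_2(x,\xi',\rho_2)
\end{pmatrix} \neq 0,
$$
with $\rho_1 = \rho_1(x,\sigma,\xi')$ and $\rho_2 = \rho_2(x,\sigma,\xi')$ the roots \eqref{rac1}--\eqref{rac2}. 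Throughout, $x$ and $\xi' \neq 0$ are fixed and all quantities are regarded as functions of $\sigma$ in a \nhd of $0$.

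I would first dispose of $\sigma = 0$. There $q(x,0,\xi',\xi_d) = (\xi_d^2 + |\xi'|^2_x)^2 = p(x,\xi',\xi_d)$, and, since $\rho_1 = \rho_2 = i|\xi'|_x$ in that case, also $q^+(x,0,\xi',\xi_d) = (\xi_d - i|\xi'|_x)^2 = p^+(x,\xi',\xi_d)$. Hence Definition~\ref{def: LS} applied to $(Q,B_1,B_2)$ at $(x,0,\xi')$ is word for word Definition~\ref{def: LS} applied to $(P,B_1,B_2)$ at $(x,\xi')$, which holds by assumption.

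For $0 < |\sigma|$ small I would extract from $D(\sigma)$ the factor $\rho_2 - \rho_1$, which is the source of its degeneracy at $\sigma = 0$. Writing $b_\ell$ for $b_\ell(x,\xi',\cdot)$, which is a polynomial in its last argument by \eqref{symbol-au-bord}, and expanding the determinant, one gets
$$
D(\sigma) = b_1(\rho_1)\big(b_2(\rho_2) - b_2(\rho_1)\big) - b_2(\rho_1)\big(b_1(\rho_2) - b_1(\rho_1)\big) = (\rho_2 - \rho_1)\,E(\sigma),
$$
$$
E(\sigma) := b_1(\rho_1)\,\frac{b_2(\rho_2) - b_2(\rho_1)}{\rho_2 - \rho_1} - b_2(\rho_1)\,\frac{b_1(\rho_2) - b_1(\rho_1)}{\rho_2 - \rho_1}.
$$
Here $\rho_2 - \rho_1 \neq 0$ whenever $\sigma \neq 0$, since $\sqrt{\sigma^4 + |\xi'|^4_x} > |\xi'|^2_x$ for $\sigma \neq 0$. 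For a polynomial $b$, the divided difference $\big(b(\rho_2) - b(\rho_1)\big)/(\rho_2 - \rho_1)$ is itself a polynomial in $(\rho_1,\rho_2)$, symmetric, and equal to $b'(\rho_1)$ when $\rho_1 = \rho_2$; combined with $\rho_1(x,\sigma,\xi') \to i|\xi'|_x$ and $\rho_2(x,\sigma,\xi') \to i|\xi'|_x$ as $\sigma \to 0$ (read off \eqref{rac1}--\eqref{rac2}), this yields
$$
\lim_{\sigma \to 0} E(\sigma) = \det
\begin{pmatrix}
  b_1 & b_2 \\
  \partial_{\xi_d} b_1 & \partial_{\xi_d} b_2
\end{pmatrix}(x,\xi',\xi_d = i|\xi'|_x) = D_0.
$$
Since $D_0 \neq 0$ by the assumed \LS condition for $(P,B_1,B_2)$, there is $\sigma_0 > 0$ such that $E(\sigma) \neq 0$ for $|\sigma| \leq \sigma_0$, whence $D(\sigma) = (\rho_2 - \rho_1) E(\sigma) \neq 0$ for $0 < |\sigma| \leq \sigma_0$. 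Together with the case $\sigma = 0$, this establishes the \LS condition for $(Q,B_1,B_2)$ at $(x,\sigma,\xi')$ for all $|\sigma| \leq \sigma_0$.

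The step I expect to be delicate is precisely the confluence of the two roots $\rho_1$ and $\rho_2$ at $\sigma = 0$: the $Q$-determinant $D(\sigma)$ vanishes there, so one cannot conclude by continuity of \eqref{Lopaugm1} directly. The remedy is the factorization above, pulling out the simple zero $\rho_2 - \rho_1$ and recognizing the complementary factor $E(\sigma)$ as a confluent, Wronskian-type determinant whose limit is exactly the $P$-determinant $D_0$ of \eqref{LS-bi}. Everything else reduces to the continuity in $\sigma$ of the roots $\rho_j$ and of polynomial divided differences.
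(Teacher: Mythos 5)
Your proposal is correct and follows essentially the same route as the paper: both factor out the vanishing $\rho_2-\rho_1$ from the $Q$-determinant (the paper via the row operation $\frac{1}{h}(\text{row}_2 - \text{row}_1)$ with $\rho_2 = \rho_1 + h$, you via the explicit divided-difference expansion of $D(\sigma)$) and then identify the limit of the complementary factor as $\sigma\to 0$ with the $P$-determinant $D_0$, which is nonzero by hypothesis. Your explicit handling of the $\sigma=0$ case and the confluent-Wronskian remark are nice clarifications but do not change the substance of the argument.
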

Before the proof of this proposition we make the following \emph{key} observation.
\begin{observation}
Consider the following two boundary operators $$B_1(x,D)=D^2_d+\alpha R(x,D')~~~~\text{and}~~~~ B_2(x,D)=D_d,~~~\text{where}~~D_d=-i\partial_d,$$ and with respective principal symbols $$b_1(x,\xi',\xi_d)=\xi_d^2+\alpha|\xi'|^2_x~~ \text{and} ~~b_2(x,\xi',\xi_d)=\xi_d.$$ Here $R(x,D')$ denote a tangential differential elliptic operator of order $2$ with principal symbol $r(x,\xi')=|\xi'|^2_x$ and $\alpha\in\R.$ The \LS condition for $(P,B_1,B_2)$ (the case $\sigma=0$) reads as follows (with $\rho=i|\xi'|^2_x$):
\begin{equation*}
\begin{vmatrix}
\rho^2+\alpha|\xi'|^2_x &~~~~ \rho\\
\\
2\rho & 1
\end{vmatrix}=-\rho^2+\alpha|\xi'|^2_x=(1+\alpha)|\xi'|^2_x\neq 0,~~\text{if}~~\alpha\neq -1.
\end{equation*}
On the other hand, the \LS condition for $(Q, B_1, B_2)$ (the case $\sigma\neq 0$) reads as follows
\begin{equation}\label{LSQ}
\begin{vmatrix}
\rho^2_1+\alpha|\xi'|^2_x &~~~~ \rho_1\\
\\
\rho^2_2+\alpha|\xi'|^2_x &~~ \rho_2
\end{vmatrix}=\rho^2_1\rho_2+\alpha\rho_2|\xi'|^2_x-\rho_1\rho^2_2-\alpha\rho_1|\xi'|^2_x=(\rho_1-\rho_2)(\rho_1\rho_2-\alpha|\xi'|^2_x).
\end{equation}
We recall that $\rho_1=-\overline{\rho}_2$ and so $\rho_1\rho_2=-|\rho_1|^2=-\sqrt{\sigma^4+|\xi'|^4_x}$. Therefore equation (\ref{LSQ}) becomes
$$(\rho_1-\rho_2)(\rho_1\rho_2-\alpha|\xi'|^2_x)=(\rho_1-\rho_2)(-\sqrt{\sigma^4+|\xi'|^4_x}-\alpha|\xi'|^2_x).$$
If $\alpha<-1,$ then for $\sigma^4=(\alpha^2-1)|\xi'|^4_x,$ we have $-\sqrt{\sigma^4+|\xi'|^4_x}-\alpha|\xi'|^2_x=0$ while for $\sigma=0,$ $-\sqrt{\sigma^4+|\xi'|^4_x}-\alpha|\xi'|^2_x=-|\xi'|^2_x-\alpha|\xi'|^2_x=-(\alpha+1)|\xi'|^2_x>0.$ This means that having \LS condition for $(P, B_1, B_2)$ does not imply necessarily that the \LS condition also holds true for $(Q, B_1, B_2).$
\end{observation}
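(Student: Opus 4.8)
I would prove the Observation by a direct comparison of the two \LS determinants, organized in three short steps.

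\emph{Step 1: the unaugmented condition ($\sigma=0$).} By the normal geodesic formulation \eqref{LS-bi}, the \LS condition for $(P,B_1,B_2)$ at $(x,\xi')$ with $\xi'\neq 0$ is the non-vanishing of $\det\begin{pmatrix} b_1 & b_2 \\ \partial_{\xi_d}b_1 & \partial_{\xi_d}b_2\end{pmatrix}$ evaluated at $\xi_d=i|\xi'|_x$. With $b_1(x,\xi',\xi_d)=\xi_d^2+\alpha|\xi'|_x^2$ and $b_2(x,\xi',\xi_d)=\xi_d$ one has $\partial_{\xi_d}b_1=2\xi_d$ and $\partial_{\xi_d}b_2=1$, so the determinant equals $b_1-2\xi_d b_2=-\xi_d^2+\alpha|\xi'|_x^2$, which at $\xi_d=i|\xi'|_x$ is $(1+\alpha)|\xi'|_x^2$. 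This is nonzero for every $\xi'\neq 0$ if and only if $\alpha\neq-1$; hence the \LS condition holds for $(P,B_1,B_2)$ on $\partial\Omega$ precisely when $\alpha\neq-1$.

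\emph{Step 2: the augmented condition ($\sigma\neq0$).} Here I would use the determinant criterion \eqref{Lopaugm1} with the roots $\rho_1,\rho_2$ of $q$ of positive imaginary part from \eqref{rac1}--\eqref{rac2}. Since $b_1(x,\xi',\rho_j)=\rho_j^2+\alpha|\xi'|_x^2$ and $b_2(x,\xi',\rho_j)=\rho_j$, expanding the $2\times2$ determinant gives $(\rho_1-\rho_2)\bigl(\rho_1\rho_2-\alpha|\xi'|_x^2\bigr)$. Writing $\rho_1=-a+ib$ and $\rho_2=a+ib$ with $a=\bigl((\sqrt{\sigma^4+|\xi'|_x^4}-|\xi'|_x^2)/2\bigr)^{1/2}$ and $b=\bigl((\sqrt{\sigma^4+|\xi'|_x^4}+|\xi'|_x^2)/2\bigr)^{1/2}>0$, I would record the two facts $\rho_1-\rho_2=-2a\neq0$ whenever $\sigma\neq0$ and $\rho_1\rho_2=-(a^2+b^2)=-\sqrt{\sigma^4+|\xi'|_x^4}$. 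Therefore the \LS condition for $(Q,B_1,B_2)$ at $(x,\sigma,\xi')$ with $\sigma\neq0$ holds if and only if $\sqrt{\sigma^4+|\xi'|_x^4}+\alpha|\xi'|_x^2\neq0$.

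\emph{Step 3: conclusion.} Finally I would take any $\alpha<-1$. By Step~1 the \LS condition holds for $(P,B_1,B_2)$ on all of $\partial\Omega$. On the other hand, fixing $\xi'\neq0$, the scalar equation $\sqrt{\sigma^4+|\xi'|_x^4}=-\alpha|\xi'|_x^2$ has a nonnegative right-hand side since $\alpha<0$, and squaring it gives $\sigma^4=(\alpha^2-1)|\xi'|_x^4$, which has a real solution $\sigma_0\neq0$ because $\alpha^2>1$; at the point $(x,\sigma_0,\xi')$ the determinant of Step~2 vanishes, so the \LS condition fails for $(Q,B_1,B_2)$ there. Thus these $B_1,B_2$ satisfy \LS for $(P,B_1,B_2)$ but not for $(Q,B_1,B_2)$, which is exactly the Observation; for contrast, at $\sigma=0$ the same scalar equals $-(1+\alpha)|\xi'|_x^2>0$, so the failure genuinely requires $\sigma\neq0$. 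No step is a real obstacle; the only thing that deserves attention is the sign bookkeeping, in particular that $\alpha<0$ makes squaring $\sqrt{\sigma^4+|\xi'|_x^4}=-\alpha|\xi'|_x^2$ reversible, together with the trivial check that $b_1$ and $b_2$ meet the order restrictions imposed on $B_1,B_2$.
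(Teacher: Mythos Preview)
Your proposal is correct and follows essentially the same approach as the paper: both compute the two \LS determinants directly, use the identity $\rho_1\rho_2=-\sqrt{\sigma^4+|\xi'|_x^4}$ (you via the explicit $-a+ib$, $a+ib$ form, the paper via $\rho_1=-\bar\rho_2$), and then solve $\sqrt{\sigma^4+|\xi'|_x^4}=-\alpha|\xi'|_x^2$ for $\alpha<-1$ to exhibit the failure point. Your added remark that $\alpha<0$ makes the squaring step reversible is a nice clarification not made explicit in the paper.
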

More generally, we set 
$$K=\det
    \begin{pmatrix}
      b_1(x,\xi', \xi_d=\rho_1)&b_2(x, \xi',\xi_d=\rho_1)\\[2pt]
      b_1(x, \xi',\xi_d=\rho_2)& b_2(x, \xi',\xi_d=\rho_2)
    \end{pmatrix}~~\text{and}~~ K'= \det\begin{pmatrix}
      b_1&b_2\\[2pt]
      \partial_{\xi_d}b_1& \partial_{\xi_d}b_2
    \end{pmatrix}(x,\xi',\xi_d=\rho).$$ Using the notation (\ref{symbol-au-bord}), straightforward computations gives:
    \begin{align*}
    K&=(\rho_2-\rho_1)\left[b_1^{k_1}b_2^{k_2-1}-b_1^{k_1-1}b_2^{k_2}\right]\\ 
    &+(\rho_2-\rho_1)\left[b_1^{k_1}b_2^{k_2-2}-b_1^{k_1-2}b_2^{k_2}\right](\rho_2+\rho_1)\\ 
    &+(\rho_2-\rho_1)\left[b_1^{k_1}b_2^{k_2-3}-b_1^{k_1-3}b_2^{k_2}\right](\rho^2_2+\rho_2\rho_1+\rho^2_1)\\ 
    &+(\rho_2-\rho_1)\left[b_1^{k_1-1}b_2^{k_2-2}-b_1^{k_1-2}b_2^{k_2-1}\right](\rho_1\rho_2)\\ 
    &+
    (\rho_2-\rho_1)\left[b_1^{k_1-1}b_2^{k_2-3}-b_1^{k_1-3}b_2^{k_2-1}\right](\rho_1\rho_2)(\rho_2+\rho_1)\\
    &+(\rho_2-\rho_1)\left[b_1^{k_1-2}b_2^{k_2-3}-b_1^{k_1-3}b_2^{k_2-2}\right](\rho_1\rho_2)^2
\end{align*}
and 
\begin{align*}
    K'&=\left[b_1^{k_1}b_2^{k_2-1}-b_1^{k_1-1}b_2^{k_2}\right]+2\left[b_1^{k_1}b_2^{k_2-2}-b_1^{k_1-2}b_2^{k_2}\right]\rho\\ 
    &+3\left[b_1^{k_1}b_2^{k_2-3}-b_1^{k_1-3}b_2^{k_2}\right]\rho^2+\left[b_1^{k_1-1}b_2^{k_2-2}-b_1^{k_1-2}b_2^{k_2-1}\right]\rho^2\\ 
    &+2\left[b_1^{k_1-1}b_2^{k_2-3}-b_1^{k_1-3}b_2^{k_2-1}\right]\rho^3+3\left[b_1^{k_1-2}b_2^{k_2-3}-b_1^{k_1-3}b_2^{k_2-2}\right]\rho^4.
\end{align*}
Here we recall that $\rho=i|\xi'|^2_x,$  $\rho_1$ and $\rho_2$ are given (\ref{rac1}) and (\ref{rac2}). We can see that having $K'\neq 0$ does not imply necessarily that $K\neq 0$ for $\sigma\neq 0.$\\
\\
For $\sigma$ near zero, we have that having the \LS condition for $(P, B_1, B_2)$ implies that the \LS condition holds true for $(Q, B_1, B_2).$ This is shown in the proof of Proposition \ref{lien-LS} below.
\begin{proof}[Proof of Proposition~\ref{lien-LS}]
Set $\rho_2=\rho_1+h$ with $h=2\sqrt{\frac{\sqrt{\sigma^4+|\xi'|^4_x}-|\xi'|^2_x}{2}}\in\mathbb{R}.$ We have $h\to 0$ when $\sigma\to 0$ and $\rho_j\to i|\xi'|_x$ when $\sigma\to 0$ for $j=1$ or $2.$ Relation (\ref{Lopaugm1}) becomes
\begin{equation*}
    \det
    \begin{pmatrix}
      b_1(x,\xi',\xi_d=\rho_1)&b_2(x, \xi',\xi_d=\rho_1)\\[2pt]
      b_1(x,\xi',\xi_d=\rho_1+h)& b_2(x, \xi',\xi_d=\rho_1+h)
    \end{pmatrix}
    \neq 0,
\end{equation*}
which is equivalent to
\begin{multline*}
    \det
    \begin{pmatrix}
      b_1(x,\xi', \xi_d=\rho_1)&b_2(x, \xi',\xi_d=\rho_1)\\[2pt]
      \frac{b_1(x, \xi',\xi_d=\rho_1+h)- b_1(x,\xi',\xi_d=\rho_1)}{h}& \frac{b_2(x, \xi',\xi_d=\rho_1+h)- b_2(x,\xi',\xi_d=\rho_1)}{h}
    \end{pmatrix}\\
 =   \det
    \begin{pmatrix}
      b_1(x,\xi', \xi_d=i|\xi'|_x)&b_2(x, \xi',\xi_d=i|\xi'|_x)\\[2pt]
      \partial_{\xi_d}b_1(x,\xi', \xi_d=i|\xi'|_x)& \partial_{\xi_d}b_2(x,\xi', \xi_d=i|\xi'|_x)
    \end{pmatrix}
    \neq 0,
    \end{multline*}
as $\sigma\rightarrow 0$. Similarly we can also set
$$\rho_1=-h+ir~~\text{and}~~\rho_2=h+ir$$ where
$h=\sqrt{\frac{\sqrt{\sigma^4+|\xi'|^4_x}-|\xi'|^2_x}{2}}\in\mathbb{R}$ and $r=\sqrt{\frac{\sqrt{\sigma^4+|\xi'|^4_x}+|\xi'|^2_x}{2}}\in\mathbb{R}.$ By using relation (\ref{Lopaugm1}), we have
\begin{equation*}
    \det
    \begin{pmatrix}
      b_1(x,\xi',\xi_d=-h+ir)&b_2(x, \xi',\xi_d=-h+ir)\\[2pt]
      b_1(x,\xi',\xi_d=h+ir)& b_2(x, \xi',\xi_d=h+ir)
    \end{pmatrix}
    \neq 0,
\end{equation*}
which is equivalent to
\begin{multline*}
    \det
    \begin{pmatrix}
      b_1(x,\xi',\xi_d=\rho_1)& b_2(x,\xi',\xi_d=\rho_1)\\[2pt]
      \frac{b_1(x, \xi',\xi_d=\theta)- b_1(x,\xi',\xi_d=\rho_1)}{2h}& \frac{b_2(x, \xi',\xi_d=\theta)- b_2(x,\xi',\xi_d=\rho_1)}{2h}
    \end{pmatrix}\\
=    \det
    \begin{pmatrix}
      b_1(x,\xi', \xi_d=ir)&b_2(x, \xi',\xi_d=ir)\\[2pt]
      \partial_{\xi_d}b_1(x,\xi', \xi_d=ir)& \partial_{\xi_d}b_2(x,\xi', \xi_d=ir)
    \end{pmatrix}
    \neq 0,
\end{multline*}
as $h\to 0$ and since $r=|\xi'|_x$. Hence, 
for $\sigma $ small, having \LS for $(P, B_1, B_2)$ implies \LS for $(Q, B_1, B_2).$
\end{proof}
\subsection{Some examples}
In connection with the examples listed in Section~\ref{sec: Examples  of boundary operators yielding symmetry}, we check the validity of the \LS condition for $(Q,B_1,B_2)$ for some boundary conditions $B_1$ and $B_2$ at $(\xi',\sigma)\neq (0,0).$
\begin{example}[Hinged boundary conditions]
With $B_1u=u$ and $B_2u=\Delta u,$ we then have the principal symbols
$b_1(x,\xi',\xi_d)=1$ and $b_2(x,\xi',\xi_d)=-\xi_d^2.$ The \LS condition for $(D^4_s+\Delta^2, B_1, B_2)$ at $(x,\sigma,\xi')$ with $(\sigma,\xi')\neq (0,0)$ is equivalent to have
\begin{equation*}
 \begin{vmatrix}
 1&-\rho^2_1\\
 \\
 1&-\rho^2_2
 \end{vmatrix}=
 -\rho^2_2+\rho^2_1=-(\rho_2-\rho_1)(\rho_2+\rho_1)\neq 0.
\end{equation*}
We note that $\rho_1=-\Bar{\rho_2}$ and therefore
\begin{equation*}
 \begin{vmatrix}
 1&-\rho^2_1\\
 \\
 1&-\rho^2_2
 \end{vmatrix}=
 \Bar{\rho}^2_2-\rho^2_2=(\Bar{\rho_2}-\rho_2)(\Bar{\rho_2}+\rho_2).
\end{equation*}
But $\Bar{\rho_2}^2-\rho^2_2=-4i\Re\rho_2\Im\rho_2=-4i\sqrt{\frac{\sqrt{\sigma^4+|\xi'|^4_x}-|\xi'|^2_x}{2}}\times\sqrt{\frac{\sqrt{\sigma^4+|\xi'|^4_x}+|\xi'|^2_x}{2}}=-2i\sigma^2\neq 0$ if $\sigma\neq 0.$
\end{example}
\begin{example}[Clamped boundary conditions]
With $B_1u=u$ and $B_2u=\partial_n u,$ we then have the principal symbols of $B_1$ and $B_2$ given respectively by
$b_1(x,\xi',\xi_d)=1$ and $b_2(x,\xi',\xi_d)=-i\xi_d.$ The \LS condition for $(D^4_s+\Delta^2, B_1, B_2)$ at $(x,\sigma,\xi')$ with $(\sigma,\xi')\neq (0,0)$ is equivalent to have
\begin{equation*}
 \begin{vmatrix}
 1&-i\rho_1\\
 1&-i\rho_2
 \end{vmatrix}=
 -i\rho_2+i\rho_1=-i(\rho_2-\rho_1)\neq 0.
\end{equation*}
But $\rho_2-\rho_1=2\sqrt{\frac{\sqrt{\sigma^4+|\xi'|^4_x}-|\xi'|^2_x}{2}}$ which is different from zero if $\sigma\neq 0.$
\end{example}
\begin{example}
Let $B_1u=\partial_n u$ and $B_2u=\partial_n\Delta u$. We have $b_1(x,\xi',\xi_d)=-i\xi_d$ and $b_2(x,\xi',\xi_d)=i\xi^3_d.$
The \LS condition for $(D^4_s+\Delta^2, B_1, B_2)$ at $(x,\sigma,\xi')$ with $(\sigma,\xi')\neq (0,0)$ is equivalent to have
\begin{equation*}
 \begin{vmatrix}
 -i\rho_1&i\rho^3_1\\
 \\
 -i\rho_2&i\rho^3_2
 \end{vmatrix}=
 \rho_1\rho^3_2-\rho_2\rho^3_1=(\rho_2-\rho_1)(\rho_2+\rho_1)(\rho_2\rho_1)\neq 0.
\end{equation*}
We note that $\rho_1=-\Bar{\rho_2}$ and then 
$$\rho_1\rho^3_2-\rho_2\rho^3_1=-4i|\rho_2|^2\Re\rho_2\Im\rho_2=-2i\sigma^2\sqrt{\sigma^4+|\xi'|^4_x}\neq0~~\text{if}~\sigma\neq 0.$$
\end{example}
\smallskip
We give an additional example which is totally different from those boundary conditions listed in Section~\ref{sec: Examples  of boundary operators yielding symmetry}.
\begin{example}\label{ex1}
Let us consider the two boundary differential operators $B_1$ and $B_2$ to be of order zero and one respectively with nonvanishing principal symbols
\begin{equation*}
b_1(x,\xi)=\langle\xi,\nu_x\rangle~~\text{and}~~b_2(x,\xi)=\langle\xi,t_x\rangle+i\langle\xi,v_x\rangle
\end{equation*}
where $t_x$ and $v_x$ are two real vector fields on $\Omega$ along $\partial\Omega,$ and $\langle\cdot,\cdot\rangle$ denotes the Euclidean scalar product. We write 
$$v_x=v^{\nu}_x\nu_x+v'_x~~\text{and}~~t_x=t^{\nu}_x\nu_x+t'_x,$$ with $v^{\nu}_x,t^{\nu}_x\in\R$ and $v'_x,t'_x\in T_x\partial\Omega\cong\R^{d-1},$ $\nu_x$ denote the unitary outward pointing vector on $\Omega$ along $\partial\Omega.$ We set $$b_j(x,\xi',\xi_d)=b_j(x,\xi'-\xi_d n_x),$$ where $n_x$ denotes the outward unit pointing conormal vector at $x.$ 
Therefore, we have
$$b_1(x,\xi',\xi_d)=-i\xi_d$$ and 
$$b_2(x,\xi',\xi_d)=\langle\xi'-\xi_d n_x,t_x\rangle+i\langle\xi'-\xi_d n_x,v_x\rangle=\langle\xi',t'_x+iv'_x\rangle-\xi_d(t^{\nu}_x+iv^{\nu}_x).$$
 For $\xi'\in T^*_x\partial\Omega$ such that $\xi'\neq 0,$ the \LS condition holds for $(P,B_1,B_2)$ if and only if
\begin{equation*}
    \det
    \begin{pmatrix}
      b_1 &b_2 \\[2pt]
      \partial_{\xi_d} b_1& \partial_{\xi_d} b_2
    \end{pmatrix} (x,\xi',\xi_d=i|\xi'|_x)
    \neq 0.
\end{equation*}
This is equivalent to have
\begin{equation*}
    \begin{vmatrix}
    |\xi'|_x & ~~~~\langle\xi',t'_x+iv'_x\rangle-i|\xi'|_x(t^{\nu}_x+iv^{\nu}_x)\\
    \\
      -i& -t^{\nu}_x-iv^{\nu}_x
    \end{vmatrix}\neq0.
\end{equation*}
That is,
$$-|\xi'|_x(t^{\nu}_x+iv^{\nu}_x)+i\langle\xi',t'_x+iv'_x\rangle+|\xi'|_x(t^{\nu}_x+iv^{\nu}_x)\neq 0\Longleftrightarrow \langle\xi',t'_x\rangle+i\langle\xi',v'_x\rangle\neq0.$$ Hence, this holds if and only if 
$$\langle\xi',t'_x\rangle\neq 0~~\text{and}~~\langle\xi',v'_x\rangle\neq 0.$$ Therefore the \LS condition holds for $(P,B_1,B_2)$ at $(x,\xi')$ with $\xi'\neq 0$ if and only if $(\langle\xi',t'_x\rangle,\langle\xi',v'_x\rangle)\neq (0,0).$\\
\\
We say that the \LS condition holds for $(Q, B_1,B_2)$ at $(x,\sigma,\xi')$ with $(\sigma,\xi')\neq (0,0)$ if and only if 
\begin{equation*}
    \det
    \begin{pmatrix}
      b_1(x,\xi', \xi_d=\rho_1)&b_2(x, \xi',\xi_d=\rho_1)\\[2pt]
      b_1(x, \xi',\xi_d=\rho_2)& b_2(x, \xi',\xi_d=\rho_2)
    \end{pmatrix}
    \neq 0.
\end{equation*}
Therefore we have
\begin{equation*}
\begin{vmatrix}
 b_1(x,\xi', \xi_d=\rho)&b_2(x, \xi',\xi_d=\rho)\\[2pt]
      b_1(x, \xi',\xi_d=\theta)& b_2(x, \xi',\xi_d=\theta)
\end{vmatrix}=
    \begin{vmatrix}
    -i\rho_1 & ~~~~\langle\xi',t'_x+iv'_x\rangle-\rho_1(t^{\nu}_x+iv^{\nu}_x)\\
    \\
      -i\rho_2&~~~~\langle\xi',t'_x+iv'_x\rangle-\rho_2(t^{\nu}_x+iv^{\nu}_x)
    \end{vmatrix}\neq0,
\end{equation*}
i.e.,
$$i(\rho_2-\rho_1)\langle\xi',t'_x+iv'_x\rangle\neq 0\Longleftrightarrow \langle\xi',t'_x\rangle+i\langle\xi',v'_x\rangle\neq 0,$$
if $\rho_1-\rho_2\neq 0,$ i.e., $\sigma\neq 0.$ Hence the \LS condition holds for $(Q, B_1,B_2)$ at $(x,\sigma,\xi)$ with $(\sigma,\xi')\neq (0,0)$ if and only if $(\langle\xi',t'_x\rangle,\langle\xi',v'_x\rangle)\neq (0,0).$

\end{example}

\subsection{Stability of the \LS condition}\label{sec: LS condition}
To prepare the study of how the \LS condition behave under conjugation with Carleman exponential weight function, we show that the \LS condition for $(Q, B_1, B_2)$ is stable under small perturbations.
\begin{lemma}
  \label{lemma: perturbation LS}
  Let $V^0$ be a compact set of $\partial\Omega$ such that the \LS
  condition holds for $(Q, B_1, B_2)$ at the point $(x^0,\sigma^0,\xi^0)$ of $V^0$, then the \LS condition remains valid for $(Q,B_1,B_2)$ at every point $(x,\sigma,\xi)$ of $V^0.$ That is to say that the \LS condition is stable under small perturbations of the coefficients.
\end{lemma}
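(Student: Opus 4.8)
\emph{Proof sketch.} The plan is to argue by compactness and continuity, after recasting the \LS condition as the non-vanishing of a single scalar function. Recall from \eqref{Lopaugm1} that for $\sigma\neq0$ the \LS condition for $(Q,B_1,B_2)$ at $(x,\sigma,\xi')$ amounts to $K(x,\sigma,\xi')\neq0$, while at $\sigma=0$, where $\rho_1=\rho_2=i|\xi'|_x$, it amounts to $K'(x,\xi')\neq0$. In the explicit expansion of $K$ displayed before the lemma every term carries the factor $\rho_2-\rho_1$, so writing $K=(\rho_2-\rho_1)\tilde K$ produces a function $\tilde K=\tilde K(x,\sigma,\xi')$ which is continuous up to $\sigma=0$, homogeneous of degree $k_1+k_2-1$ in $(\sigma,\xi')$ (since $K$ has degree $k_1+k_2$ and $\rho_2-\rho_1$ degree $1$, by Remark~\ref{homoge} and \eqref{rac1}--\eqref{rac2}), and --- comparing term by term, exactly as in the proof of Proposition~\ref{lien-LS} --- satisfies $\tilde K(x,0,\xi')=K'(x,\xi')$. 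Hence the \LS condition for $(Q,B_1,B_2)$ at $(x,\sigma,\xi')$ is equivalent to $\tilde K(x,\sigma,\xi')\neq0$ for \emph{all} $(\sigma,\xi')\neq(0,0)$, the case $\sigma=0$ included.

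By homogeneity it then suffices to control $\tilde K$ on the ``unit sphere'' $\mathcal S=\{(x,\sigma,\xi'):x\in V^0,\ |\xi'|_x^4+\sigma^4=1\}$: any $(\sigma,\xi')\neq(0,0)$ equals $\Lambda\,(\sigma/\Lambda,\xi'/\Lambda)$ with $\Lambda=(\sigma^4+|\xi'|_x^4)^{1/4}$ and $(x,\sigma/\Lambda,\xi'/\Lambda)\in\mathcal S$. Since $r(x,\xi')\asymp|\xi'|^2$ uniformly for $x$ in the compact set $V^0$ and $\mathsf g$ is smooth, $\mathcal S$ is compact, and $\tilde K$ is continuous on it (the roots $\rho_j$ being given by \eqref{rac1}--\eqref{rac2}, whose nested radicands are nonnegative, so that no branch ambiguity arises). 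The assumed \LS condition gives $\tilde K\neq0$ on $\mathcal S$, hence by compactness $|\tilde K|\geq C_0>0$ on $\mathcal S$, and then $|\tilde K(x,\sigma,\xi')|\geq C_0\Lambda^{k_1+k_2-1}$ for all $x\in V^0$ and $(\sigma,\xi')\neq(0,0)$, in the spirit of \eqref{eq: homog formul LS condition 1} and \eqref{eq: homog formul LS condition}.

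Stability then follows: $\tilde K$ depends polynomially on the coefficients of the principal symbols $b_1,b_2$ of $B_1,B_2$ and continuously on the metric $\mathsf g$ (through $|\xi'|_x$ and the $\rho_j$), locally uniformly on $\mathcal S$. Thus if $(\tilde Q,\tilde B_1,\tilde B_2)$ is a sufficiently small perturbation of $(Q,B_1,B_2)$ --- with the same orders $k_1,k_2$, and, say, $\tilde Q=D_s^4+\Delta_{\tilde{\mathsf g}}^2$ with $\tilde{\mathsf g}$ close to $\mathsf g$, the general case reducing to this one by continuous dependence of the roots on the symbol --- the corresponding function differs from $\tilde K$ by $O(\delta)$ uniformly on $\mathcal S$, hence stays $\geq C_0/2>0$ there, hence everywhere by homogeneity; this is precisely the \LS condition for $(\tilde Q,\tilde B_1,\tilde B_2)$ at every point over $V^0$. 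The same continuity in $(x,\sigma,\xi')$ shows that the \LS condition at one point propagates to a full neighborhood, as in Remark~\ref{RR0}.

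\emph{Main obstacle.} The one point needing care is the collision $\rho_1=\rho_2$ at $\sigma=0$: there the naive determinant $K$ vanishes, so $K$ alone yields no uniform lower bound near $\sigma=0$. Factoring out $\rho_2-\rho_1$ and identifying the quotient at $\sigma=0$ with the bi-Laplace determinant $K'$ is what turns the assertion into the clean statement that $\tilde K$ does not vanish on the compact set $\mathcal S$, after which the argument is routine.
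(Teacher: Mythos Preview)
Your approach is correct and, in fact, cleaner and more general than the paper's own argument. The paper proceeds by a direct hands-on computation with the $2\times 2$ determinant, factoring it as $(r_2-r_1)$ times a coefficient determinant and then perturbing $\xi'\to\xi'+\zeta'$ and the roots $r_j\to r_j+\delta_j$; but the computation is only written out under the simplifying assumption that $b_1,b_2$ are \emph{linear} in the normal variable (see \eqref{symb1}), whereas the lemma is later applied to general $B_j$ of order up to $3$. Your compactness--homogeneity argument via $\tilde K=K/(\rho_2-\rho_1)$ handles arbitrary orders uniformly and identifies the $\sigma=0$ limit with the bi-Laplace determinant $K'$ exactly as in Proposition~\ref{lien-LS}, which is the genuinely delicate point.

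Two remarks are worth making. First, the way the lemma is actually invoked downstream (in the proof of Proposition~\ref{prop: LS after conjugation}) is with \emph{complex} perturbations: $\zeta'=i\tau d_{x'}\varphi\in i\R^{d-1}$ and $\delta_j=i(\alpha_j-\tilde\rho_j)\in\C$. Your argument, as written, perturbs the real coefficients of $b_1,b_2$ and the metric; to cover the intended application you should observe that $\tilde K$ is polynomial in the coefficients of $b_j$ and in $(\rho_1,\rho_2)$, hence extends holomorphically off the real locus, so that a uniform lower bound on the real sphere $\mathcal S$ persists under small complex perturbations as well. This is an easy addition, but it is precisely what the paper's version is set up to deliver. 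Second, the lemma's hypothesis, read literally, only assumes the \LS condition at a single point $(x^0,\sigma^0,\xi'^0)$, while your argument uses it on all of $\mathcal S$; your reading is the one consistent with the second sentence of the statement and with how the lemma is used, so this is a wording issue in the paper rather than a gap in your proof.
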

\begin{proof}
According to Definition \ref{def: LS}, let 
$$f(\z)=c_1b_1(x,\xi',\z)+c_2b_2(x,\xi',\z)+g(z)h(x,\sigma,\xi',\z),$$ for all $\xi'\in \R^{d-1},$ $x\in V^0,$ $\sigma\in\R$ and $\z\in\C.$
We suppose that $h$ is a polynomial function (in $\z$) of order 2 and plays the same role as $q^+.$ 
In addition, we consider $f$ to be at most of degree one, and 
\begin{equation}\label{symb1}
b_1(x,\xi',\z)=b_1^1(x,\xi')\z+b_1^0(x,\xi'),~~~~b_2(x,\xi',\z)=b_2^1(x,\xi')\z+b_2^0(x,\xi'),
\end{equation}
where $\xi'\in\R^{d-1}$.

We distinguish two cases:\\
\\
\textbf{Case 1:} The polynomial function $h$ has two distinct roots with positive imaginary parts, let say $r_1$ and $r_2.$ Having the \LS for $(Q,B_1,B_2)$ holds at $x$ is equivalent to the following condition
\begin{equation}\label{EE1}
    \begin{vmatrix}
      b_1(x,\xi', \z=r_1)&~~b_2(x, \xi',\z=r_1)\\
      \\
      b_1(x, \xi',\z=r_2)&~~b_2(x, \xi',\z=r_2)
    \end{vmatrix}\neq 0.
\end{equation}
Using the multi-linearity of the determinant together with (\ref{symb1}), we find that condition (\ref{EE1}) is equivalent to have
\begin{align*}
(r_2-r_1)\begin{vmatrix}
b_1^0(x,\xi') &~~~~ b_2^0(x,\xi')\\
\\
b_1^1(x,\xi') &~~ b_2^1(x,\xi')
\end{vmatrix}\neq 0.
\end{align*}
Thus it suffices to have
\begin{equation}\label{lspini}
\begin{vmatrix}
b_1^0(x,\xi') &~~~~ b_2^0(x,\xi')\\
\\
b_1^1(x,\xi') &~~ b_2^1(x,\xi')
\end{vmatrix}\neq 0
\end{equation}
since $r_1$ and $r_2$ are distinct and different from zero for $\sigma\neq 0$.
Now we aim to show that
\begin{equation*}\label{EE2}
\mathcal{A}=
    \begin{vmatrix}
      b_1(x,\xi'+\zeta', \z=r_1+\delta)&~~b_2(x, \xi'+\zeta',\z=r_1+\delta)\\
      \\
      b_1(x, \xi'+\zeta',\z=r_2+\tilde{\delta})&~~b_2(x, \xi'+\zeta',\z=r_2+\tilde{\delta})
    \end{vmatrix}\neq 0,
\end{equation*}
where $\zeta'\in C^{d-1}$ and $\delta,\tilde{\delta}\in\C,$ with $|\delta|+|\zeta'|<|r_1|, |\tilde{\delta}|+|\zeta'|<|r_2|.$ Using again, the multi-linearity of the determinant, we find that
\begin{equation*}\label{lspapc}
\mathcal{A}=
[(r_2-r_1)+(\tilde{\delta}-\delta)]\begin{vmatrix}
b_1^0(x,\xi'+\zeta') &~~~~ b_2^0(x,\xi'+\zeta')\\
\\
b_1^1(x,\xi'+\zeta') &~~ b_2^1(x,\xi'+\zeta)
\end{vmatrix}.
\end{equation*}
So, it is suffices to have
\begin{equation*}
\begin{vmatrix}
b_1^0(x,\xi'+\zeta') &~~~~ b_2^0(x,\xi'+\zeta')\\
\\
b_1^1(x,\xi'+\zeta') &~~ b_2^1(x,\xi'+\zeta)
\end{vmatrix}\neq 0.
\end{equation*}
This follows from (\ref{lspini}).\\
\\
\textbf{Case 2:} The polynomial function $h$ has a double root, let say $r$ and we have
\begin{align*}
     &\begin{vmatrix}
    b_1(x,\xi'+\zeta',\z=r+\delta) & b_2(x,\xi'+\zeta', \z=r+\delta)\\
    \\
    \partial_{\z}b_1(x,\xi'+\zeta', \z=r+\delta) & \partial_{\z}b_2(x,\xi'+\zeta', \z=r+\delta)
\end{vmatrix}\\
    &\qquad \ \quad \quad \quad\quad = \begin{vmatrix}
b_1^1(x,\xi'+\zeta')(r+\delta)+b_1^0(x,\xi'+\zeta') &~~~~ b_2^1(x,\xi'+\zeta')(r+\delta)+b_2^0(x,\xi'+\zeta') \\
\\
b_1^1(x,\xi'+\zeta') &~~ b_2^1(x,\xi'+\zeta')
\end{vmatrix}\\
\\
    & \qquad \ \quad \quad \quad\quad =
     (r+\delta)\underbrace{\begin{vmatrix}
 b_1^1(x,\xi'+\zeta')&    b_2^1(x,\xi'+\zeta')\\
 \\
 b_1^1(x,\xi'+\zeta') &~~ b_2^1(x,\xi'+\zeta')
\end{vmatrix}}_{=0}+\underbrace{\begin{vmatrix}
b_1^0(x,\xi'+\zeta') &~~~~ b_2^0(x,\xi'+\zeta')\\
\\
b_1^1(x,\xi'+\zeta') &~~ b_2^1(x,\xi'+\zeta')
\end{vmatrix}}_{\neq 0~~\text{thanks to \textbf{Case 1}}}\neq 0.
\end{align*}
\end{proof}

\subsection{The \LS condition after conjugation}\label{sec: LS condition for conjugated
  bilaplace} Here, we study the \LS condition for the augmented operator $Q$ after conjugation.
\subsubsection{The augmented operator}
We conjugate the operator $Q=D_s^4+\Delta^2$ with $\varphi\in\Cinf(\R^N, \R)$. We shall refer to $\varphi$ as the Carleman weight function. We set $A=-\Delta$ and we denote by $A_{\varphi}$ its conjugate. We consider a weight function that depends  on the variable $s$ and $x$ with $x=(x_d,x').$ We also write $$Q=Q_2Q_1,~~\text{with}~~Q_j=(-1)^jiD_s^2+A.$$
In normal geodesic coordinates, setting $Q_{\varphi}=e^{\tau\varphi}Qe^{-\tau\varphi}$ (with $\tau>0$) we have
\begin{equation}\label{lsaugm1}
  Q_{\varphi}=L_2L_1~~\text{with}~~L_j=e^{\tau\varphi}Q_je^{-\tau\varphi}=(-1)^ji(D_s+i\tau\partial_s\varphi)^2+A_{\varphi}
\end{equation}
with $$A_{\varphi}=e^{\tau\varphi}Ae^{-\tau\varphi}=(D_d+i\tau\partial_d\varphi(x))^2+R(x,D'+i\tau d_{x'}\varphi(x)),~~x=(x_d,x').$$
In fact, for $j=1,2$, we write $L_j$ in the following form
\begin{equation}\label{lsaugmt2}
  L_j= (D_d+i\tau\partial_d \varphi)^2+\Gamma_j,~~~~\Gamma_j=(-1)^ji(D_s+i\tau\partial_s\varphi)^2+R(x,D'+i\tau d_{x'}\varphi).
\end{equation}
For $j=1,2,$ we denote the principal symbols of $L_j$ and $\Gamma_j$ by $\ell_j$ and $\gamma_j$ respectively, which gives with $\varrho=(s, x,\sigma,\xi,\tau)$
\begin{equation*}
    \ell_j(\varrho)=(\xi_d+i\tau\partial_d\varphi)^2+\gamma_j(\varrho'),~~~\varrho'=(s, x,\sigma,\xi',\tau)
\end{equation*}
where
\begin{equation*}
   \gamma_j(\varrho')=(-1)^ji(\sigma+i\tau\partial_s\varphi)^2+r(x,\xi'+i\tau d_{x'}\varphi).
\end{equation*}
We now study the roots of $\ell_j$ viewed as a polynomial in the variable $\xi_d,$ with the other variables acting as parameters. 

\subsubsection{Roots properties and configuration for each factor}
\label{sec: Root configuration for each factor}
We consider the factors $\xi_d\mapsto
\ell_j(\varrho)$ and we recall that 
\begin{align*}
  \ell_j(\varrho)
  =(\xi_d+i\tau\partial_d\varphi)^2
  +\gamma_j(\varrho'),
  \ \ j=1,2.
\end{align*}

First, we  consider the case  $\gamma_j(\varrho') \in \R^-$, that is, equal to $-
\beta^2$ with $\beta\in \R$ . 
Then, the roots of $\xi_d\mapsto
\ell_j(\varrho)$ are given by
\begin{align*}
  -i\tau\partial_d\varphi + \beta
  \quad \text{and} \quad
  -i\tau\partial_d\varphi - \beta.
\end{align*}
Both lie in the lower complex open half-plane.

Second, we consider the case $\gamma_j(\varrho') \in\C\setminus
\R^-$. There exists a unique $\alpha_j \in \C$ such
that 
\begin{equation*}
 \Re\alpha_j> 0~~\text{and}~~\alpha_j^2:=\alpha_j^2(\y')= \gamma_j(\varrho'),~~~~j=1,2. 
\end{equation*}
We have 
\begin{align}\label{Ee1}\notag
  \alpha_j^2 &= r(x,\xi'+i\tau d_{x'}\varphi) + (-1)^ji(\sigma+i\tau\partial_s\varphi)^2)\\ \notag
             &= r(x,\xi') - \tau^2 r(x, d_{x'}\varphi)
               +   2i \tau \tilde{r}(x,\xi',d_{x'}\varphi)+(-1)^ji(\sigma+i\tau\partial_s\varphi)^2,\\ 
               &=r(x,\xi') - \tau^2 r(x, d_{x'}\varphi)
               +   2i \tau \tilde{r}(x,\xi',d_{x'}\varphi)+(-1)^ji\sigma^2+2(-1)^{j+1}\sigma\tau\partial_s\varphi+(-1)^{j+1}i(\tau\partial_s\varphi)^2
\end{align}
where $\tilde{r}(x,.,.)$ denotes the symmetric bilinear form associated with the quadratic form $r(x,.)$.
We may then write 
\begin{equation*}
  \ell_j(\y)=(\xi_d+i\tau\partial_d\varphi)^2+\alpha_j^2=(\xi_d-\pi_{j,+}(\y'))(\xi_d-\pi_{j,-}(\y')),
\end{equation*}
with
\begin{equation}\label{lsaugmt7}
\pi_{j,\pm}(\y')=-i\tau\partial_d\varphi\pm i\alpha_j(\y'),~~j=1,2.
\end{equation}
If $\tau\partial_d\varphi>C_1>0,$ $\Im \pi_{j,-}$ remains always in the complex lower half-plane while $\pi_{j,+}$ may cross the real line.
\begin{remark}\label{doubleroot}
Suppose that $\tau\partial_d\varphi\geq 0.$ Let $j=1$ or $2$. We have $$\pi_{j,-}(\y')=\pi_{j,+}(\y')\Leftrightarrow\pi_{j,-}(\y')=\pi_{j,+}(\y')=-i\tau\partial_d\varphi\Leftrightarrow\gamma_j(\y')=0.$$
\end{remark}

One has $\Im \pi_{j,-} <0$ since $\partial_d\varphi \geq C_1>0$. 
With $\Im \pi_{j,+} = -\tau\partial_d\varphi + \Re \alpha_j$ one sees that
the sign of $\Im \pi_{j,+}$ may change. The following lemma gives an algebraic characterization of the sign of  $\Im \pi_{j,+}$.

\medskip
\noindent We recall the definition of $\rho_j$ for $j=1,2$ is given in \eqref{rac1} and \eqref{rac2}.
\begin{lemma}
  \label{lemma: caracterisation Im pi2 <0}
  Assume that $\partial_d \varphi>0$, there exists $K_0>0$  sufficiently small such that $|\partial_s\varphi|+|d_{x'}\varphi|_x\leq K_0|\partial_d\varphi|$. There exist $C_1$ and $C_2$ such that, for $j=1,2,$  if we have
  $$(\tau\partial_d\varphi)^4\geq C_1|\rho_j|^3(\tau\partial_d\varphi)+ C_2 (\tau \partial_d\varphi)^2|\rho_j|^2+\sigma^4+(\tau\partial_s\varphi)^4.$$ 
then 
  $\Im \pi_{j,+}<0$, for $j=1,2$. 
\end{lemma}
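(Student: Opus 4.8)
\emph{Sketch of the argument.} The plan is to rewrite the sign condition $\Im\pi_{j,+}<0$ as a polynomial inequality in $\gamma_j=\gamma_j(\varrho')$, and then to absorb all the lower-order contributions using the smallness of the tangential part of $d\varphi$ together with the assumed inequality. Put $T:=\tau\partial_d\varphi>0$. If $\gamma_j\in\R^-$, then, as recalled just above the statement, both roots of $\xi_d\mapsto\ell_j(\varrho)$ equal $-iT\pm\beta$ with $\beta\in\R$, so they have imaginary part $-T<0$ and there is nothing to prove; hence I would assume $\gamma_j\notin\R^-$, so that $\alpha_j$ is well defined with $\Re\alpha_j>0$ and $\alpha_j^2=\gamma_j$. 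By \eqref{lsaugmt7}, $\Im\pi_{j,+}=\Re\alpha_j-T$, so the claim amounts to $(\Re\alpha_j)^2<T^2$. Writing $\alpha_j=p+iq$ with $p>0$ gives $p^2=\tfrac{1}{2}\big(|\gamma_j|+\Re\gamma_j\big)$, so the claim becomes $|\gamma_j|<2T^2-\Re\gamma_j$. It then suffices to establish
\begin{equation}\label{eq: target im pi plus}
  \big(\Im\gamma_j\big)^2+4T^2\,\Re\gamma_j<4T^4 ,
\end{equation}
because \eqref{eq: target im pi plus} first forces $\Re\gamma_j<T^2$, hence $2T^2-\Re\gamma_j>0$, and squaring $|\gamma_j|<2T^2-\Re\gamma_j$ then gives back exactly \eqref{eq: target im pi plus}.

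Next I would bound $\Re\gamma_j$ and $\Im\gamma_j$ from the expansion \eqref{Ee1}. Using $|\tilde r(x,\xi',d_{x'}\varphi)|\le|\xi'|_x\,|d_{x'}\varphi|_x$ (Cauchy--Schwarz for $r(x,\cdot)$), the assumption $|\partial_s\varphi|+|d_{x'}\varphi|_x\le K_0|\partial_d\varphi|$ — so that $\tau|d_{x'}\varphi|_x\le K_0 T$ and $\tau|\partial_s\varphi|\le K_0 T$ — and the relation $|\rho_j|^2=\sqrt{\sigma^4+|\xi'|_x^4}$ coming from \eqref{rac1}--\eqref{rac2} (whence $|\xi'|_x\le|\rho_j|$, $|\sigma|\le|\rho_j|$ and $\sigma^4\le|\rho_j|^4$), one obtains, after dropping the nonpositive term $-\tau^2 r(x,d_{x'}\varphi)$ in the real part,
\begin{equation*}
  \Re\gamma_j\le|\rho_j|^2+2K_0 T|\rho_j|
  \qquad\text{and}\qquad
  |\Im\gamma_j|\le|\rho_j|^2+2K_0 T|\rho_j|+K_0^2 T^2=\big(|\rho_j|+K_0 T\big)^2 .
\end{equation*}
Inserting these into the left-hand side of \eqref{eq: target im pi plus} and setting $\delta:=|\rho_j|/T$ bounds it by $T^4\big[(\delta+K_0)^4+4\delta^2+8K_0\delta\big]$, so it remains to arrange $(\delta+K_0)^4+4\delta^2+8K_0\delta<4$.

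Finally I would invoke the hypothesis. Since $(\tau\partial_s\varphi)^4\le K_0^4 T^4$, discarding the nonnegative terms $C_1|\rho_j|^3(\tau\partial_d\varphi)$ and $\sigma^4$ in the assumed inequality yields $(1-K_0^4)T^4\ge C_2 T^2|\rho_j|^2$ and $(1-K_0^4)T^4\ge C_1|\rho_j|^3 T$, that is $C_2\delta^2\le1$ and $C_1\delta^3\le1$ (once $K_0<1$), so $\delta\le\min\big(C_2^{-1/2},C_1^{-1/3}\big)$. The map $(\delta,K_0)\mapsto(\delta+K_0)^4+4\delta^2+8K_0\delta$ is continuous and vanishes at the origin, so for the given small $K_0$ one may then take $C_1$ and $C_2$ large — depending on $K_0$ and on the metric constants entering the bounds above — so that $(\delta+K_0)^4+4\delta^2+8K_0\delta<4$; this is precisely \eqref{eq: target im pi plus}, and therefore $\Im\pi_{j,+}<0$, for $j=1,2$.

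The point that really needs care is the bookkeeping in the middle step: the term $(-1)^j\sigma^2$ appearing in $\Im\gamma_j$ is of the same size as $|\rho_j|^2$ and is not small a priori, so its square $\sigma^4$ has to be absorbed; this works only because $\sigma\le|\rho_j|$ and the assumed inequality controls $|\rho_j|$ by a small multiple of $T$ (through $C_1$ or $C_2$), and one must keep the order of the quantifiers — first $K_0$ small, then $C_1,C_2$ large depending on it.
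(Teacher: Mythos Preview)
Your proof is correct and follows essentially the same approach as the paper's: both reduce $\Im\pi_{j,+}<0$ to the polynomial inequality $(\Im\alpha_j^2)^2+4(\tau\partial_d\varphi)^2\Re\alpha_j^2<4(\tau\partial_d\varphi)^4$ (your \eqref{eq: target im pi plus}, the paper's Lemma~\ref{lem1}), and then bound the real and imaginary parts of $\gamma_j=\alpha_j^2$ using $|\xi'|_x,|\sigma|\le|\rho_j|$ and the smallness assumption on the tangential derivatives of $\varphi$. Your bookkeeping is somewhat cleaner --- bundling the imaginary-part bound as $|\Im\gamma_j|\le(|\rho_j|+K_0T)^2$ and passing to the dimensionless variable $\delta=|\rho_j|/T$ --- whereas the paper expands $(\Im\alpha_j^2)^2$ term by term; but the underlying mechanism is identical.
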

\begin{proof}
  From~(\ref{lsaugmt7}) one has $\Im \pi_{j,+} <0$ if and only if
$\Re \alpha_j < \tau \partial_d \varphi = | \tau \partial_d\varphi|$, that is, if
and only if 
\begin{align*}
  4 (\tau \partial_d\varphi)^2 \Re \alpha_j^2-4 (\tau \partial_d\varphi)^4+(\Im
  \alpha_j^2)^2 < 0,
\end{align*}
by Lemma~\ref{lem1} below. With \eqref{Ee1}, we have
\begin{equation*}
\begin{cases}
        \Re\alpha_j^2  &=r(x,\xi')-r(x,\tau d_{x'}\varphi)-2(-1)^j\sigma(\tau\partial_s\varphi) \\
        (\Im\alpha_j^2)^2&=4\tau^2\tilde{r}(x,\xi',d_{x'}\varphi)^2+(\tau\partial_s\varphi)^4+\sigma^4+4(-1)^j\sigma^2\tau \tilde{r}(x,\xi',d_{x'}\varphi)\\&-4(-1)^j(\tau\partial_s\varphi)^2\tau \tilde{r}(x,\xi',d_{x'}\varphi)-2\sigma^2(\tau\partial_s\varphi)^2.
     \end{cases}
\end{equation*}
Therefore the inequality $$ 4 (\tau \partial_d\varphi)^2 \Re \alpha_j^2-4 (\tau \partial_d\varphi)^4+(\Im
  \alpha_j^2)^2 < 0$$ is equivalent to
\begin{align}  \label{eqsimpl}   
    &  4(\tau\partial_d\varphi)^2r(x,\xi')+4\tau^2\tilde{r}(x,\xi',d_{x'}\varphi)^2+\sigma^4+(\tau\partial_s\varphi)^4  \notag  \\
   &\quad   <4(\tau\partial_d\varphi)^4+4(\tau\partial_d\varphi)^2r(x,\tau d_{x'}\varphi)+8(-1)^j(\tau\partial_d\varphi)^2\sigma(\tau\partial_s\varphi)+
      2\sigma^2(\tau\partial_s\varphi)^2 \notag\\
  &\qquad    -4(-1)^j\sigma^2\tau \tilde{r}(x,\xi',d_{x'}\varphi)+4(-1)^j(\tau\partial_s\varphi)^2\tau \tilde{r}(x,\xi',d_{x'}\varphi) \notag  \\
      &\quad< 4(\tau\partial_d\varphi)^2|\tau d_x\varphi|_x^2+8(-1)^j(\tau\partial_d\varphi)^2\sigma(\tau\partial_s\varphi) +2\sigma^2(\tau\partial_s\varphi)^2-4(-1)^j\sigma^2\tau \tilde{r}(x,\xi',d_{x'}\varphi) \notag \\
  &\qquad         +4(-1)^j(\tau\partial_s\varphi)^2\tau \tilde{r}(x,\xi',d_{x'}\varphi)  
  \end{align}   
using the fact that $|\tau d_x\varphi|_x^2=|\tau d_{x'}\varphi|_x^2+(\tau\partial_d\varphi)^2.$
 On the one hand, since $|d_{x'} \varphi| \leq K_0 |\partial_d
  \varphi|$ one has 
\begin{align}
	\label{est: 1}
  4(\tau\partial_d\varphi)^2r(x,\xi')+4\tilde{r}(x,\xi',\tau d_{x'}\varphi)^2+\sigma^4+(\tau\partial_s\varphi)^4
  &\leq (4+4K_0^2) (\tau\partial_d\varphi)^2 |\xi'|_x^2+\sigma^4+(\tau\partial_s\varphi)^4   \notag\\
  &\leq (4+4K_0^2) (\tau\partial_d\varphi)^2|\rho_j|^2+\sigma^4+(\tau\partial_s\varphi)^4 , 
\end{align}
since $|\rho_j|^2=(\sigma^4+|\xi'|_x^4)^{1/2}\geq |\xi'|_x^2$, $r(x,\xi')=|\xi'|^2_x$ and $|\tilde{r}(x,\xi',\tau d_{x'}\varphi)^2|\leq |\xi'|^2_x|\tau d_{x'}\varphi|_x^2.$
On the other hand one has
\begin{align*}
&4(\tau\partial_d\varphi)^2|\tau d_x\varphi|^2+8(-1)^j(\tau\partial_d\varphi)^2\sigma(\tau\partial_s\varphi)+2\sigma^2(\tau\partial_s\varphi)^2
-4(-1)^j\sigma^2\tau \tilde{r}(x,\xi',d_{x'}\varphi)\\
&\quad -4(-1)^j(\tau\partial_s\varphi)^2\tau \tilde{r}(x,\xi',d_{x'}\varphi) \\
&\qquad  \geq 4(\tau\partial_d\varphi)^4-8(\tau\partial_d\varphi)^2|\sigma||\tau\partial_s\varphi| 
    -4\sigma^2|\tau d_{x'}\varphi|_x|\xi'|_x-4|\tau d_{x'}\varphi|_x|\xi'|_x(\tau\partial_s\varphi)^2\\
    & \qquad \geq 4(\tau\partial_d\varphi)^4-8K_0(\tau\partial_d\varphi)^3|\rho_j|
    -4K_0|\rho_j|^3(\tau\partial_d\varphi)-4K^3_0|\rho_j|(\tau\partial_d\varphi)^3\\
    & \qquad \geq 4(\tau\partial_d\varphi)^4-(8K_0+4K^3_0)(\tau\partial_d\varphi)^3|\rho_j|
    -4K_0|\rho_j|^3(\tau\partial_d\varphi).
\end{align*}
Using $xy^2\leq \frac 13 x^3+\frac 23 y^3$, we then have
\begin{align} 
	\label{est: 2}
&4(\tau\partial_d\varphi)^4-(8K_0+4K^3_0)(\tau\partial_d\varphi)^3|\rho_j| 
-4K_0|\rho_j|^3(\tau\partial_d\varphi)  \notag \\
&\qquad \qquad \qquad\geq 4(\tau\partial_d\varphi)^4-(8K_0+4K^3_0)(\frac 13 |\rho_j|^3
+\frac 23 (\tau\partial_d\varphi)^3)(\tau\partial_d\varphi)  
-4K_0|\rho_j|^3(\tau\partial_d\varphi)  \notag  \\
&\qquad \qquad \qquad \geq (\tau\partial_d\varphi)^4-C K_0|\rho_j|^3(\tau\partial_d\varphi)
\end{align}
for $K_0$ sufficiently small  and some $C>0.$  From \eqref{est: 1} 
and \eqref{est: 2}, \eqref{eqsimpl} holds if one has
$$(\tau\partial_d\varphi)^4-C K_0|\rho_j|^3(\tau\partial_d\varphi)\geq (4+4K_0^2) (\tau \partial_d\varphi)^2|\rho_j|^2+\sigma^4+(\tau\partial_s\varphi)^4,$$ that is   
$$(\tau\partial_d\varphi)^4\geq C K_0|\rho_j|^3(\tau\partial_d\varphi)+ (4+4K_0^2) (\tau \partial_d\varphi)^2|\rho_j|^2+\sigma^4+(\tau\partial_s\varphi)^4.$$ This ends the proof.
\end{proof}
\begin{lemma}
  \label{lem1}
Let $z\in\C$ such that $m=z^2.$ For $x_0\in\R$ such
that $x_0\neq 0$, we have 
\begin{align*}
  |\Re z|\lesseqqgtr |x_0|
  \qquad \Longleftrightarrow
  \qquad 4x_0^2 \Re m-4x_0^4+(\Im m)^2
  \lesseqqgtr0.
  \end{align*}
\end{lemma}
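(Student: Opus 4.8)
The plan is to reduce the statement to an elementary algebraic identity. First I would write $z = a + i b$ with $a = \Re z$ and $b = \Im z$ real, so that $m = z^2 = (a^2 - b^2) + 2 i a b$; hence $\Re m = a^2 - b^2$ and $\Im m = 2 a b$. Substituting these expressions into the quantity under consideration gives
\[
4 x_0^2 \Re m - 4 x_0^4 + (\Im m)^2 = 4 x_0^2 (a^2 - b^2) - 4 x_0^4 + 4 a^2 b^2.
\]

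Next I would factor the right-hand side. Grouping the terms proportional to $a^2$ and those independent of $a^2$, one obtains
\[
4 a^2 (x_0^2 + b^2) - 4 x_0^2 (b^2 + x_0^2) = 4 (x_0^2 + b^2)\big(a^2 - x_0^2\big).
\]
Since $x_0 \neq 0$, the factor $x_0^2 + b^2$ is strictly positive, so the sign of $4 x_0^2 \Re m - 4 x_0^4 + (\Im m)^2$ coincides with the sign of $a^2 - x_0^2 = (\Re z)^2 - x_0^2$. Finally, $(\Re z)^2 - x_0^2$ is negative, zero, or positive exactly when $|\Re z|$ is less than, equal to, or greater than $|x_0|$ respectively, which is precisely the claimed three-way equivalence $\lesseqqgtr$.

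There is essentially no obstacle here; the only point that requires the hypothesis $x_0 \neq 0$ is that it forces $x_0^2 + b^2 > 0$, which is what makes the equivalence valid uniformly across the strict inequality and equality cases. (This lemma is exactly the tool used in the proof of Lemma~\ref{lemma: caracterisation Im pi2 <0} to translate the condition $\Re \alpha_j < |\tau \partial_d \varphi|$, with $\alpha_j^2 = \gamma_j(\varrho')$, into the polynomial inequality in~\eqref{eqsimpl}.)
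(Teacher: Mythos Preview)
Your proof is correct and follows essentially the same approach as the paper: write $z=a+ib$, compute $\Re m$ and $\Im m$, and factor the expression as $4(x_0^2+b^2)(a^2-x_0^2)$, from which the three-way sign equivalence is immediate since $x_0^2+b^2>0$. The paper's argument is identical up to the choice of variable names.
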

\begin{proof}
Let $z=x+iy\in\C.$ On the one hand we have
$z^2=x^2-y^2+2ixy=m$ and $\Re m=x^2-y^2,$  $\Im m=2xy.$ On the other
hand we have 
\begin{align*}
  4x_0^2\Re m-4x_0^4+(\Im m)^2
  &=4x_0^2(x^2-y^2)-4x_0^4+4x^2y^2\\
  &=4(x_0^2+y^2)(x^2-x_0^2),
\end{align*}
thus with the same sign as $(x^2-x_0^2)$.
Since $|\Re z|\lesseqqgtr |x_0| \ \Leftrightarrow \ 
x^2-x_0^2 \lesseqqgtr 0$ the conclusion follows.
\end{proof}
\begin{lemma}\label{low-frequency}
 There exists $K_0>0$  and $C>0$ such that if  $|\partial_s\varphi|+|d_{x'}\varphi|\leq K_0|\partial_d\varphi|$ 
 and
  $C |\rho_j|\leq (\tau\partial_d\varphi),$ $j=1,2$ then
 $\Im\pi_{j,+}(x,s,\xi',\sigma,\tau)<0$.
\end{lemma}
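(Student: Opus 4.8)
\emph{Proof of Lemma~\textup{\ref{low-frequency}} (plan).}
The plan is to obtain the statement as a direct consequence of Lemma~\ref{lemma: caracterisation Im pi2 <0}, which already provides an algebraic sufficient condition for $\Im\pi_{j,+}<0$. I would first fix $K_0$ small enough that Lemma~\ref{lemma: caracterisation Im pi2 <0} applies (so that in particular $\partial_d\varphi>0$ is in force), and let $C_1,C_2$ be the constants it produces. Writing $t=\tau\partial_d\varphi>0$, it then suffices to check that the hypotheses $|\partial_s\varphi|+|d_{x'}\varphi|\leq K_0|\partial_d\varphi|$ and $C|\rho_j|\leq t$ force
$$t^4\geq C_1|\rho_j|^3 t+C_2 t^2|\rho_j|^2+\sigma^4+(\tau\partial_s\varphi)^4 .$$

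The first step is to control the two ``extra'' terms on the right by the quantities already appearing on the left. Since $|\tau\partial_s\varphi|\leq K_0\,t$ one has $(\tau\partial_s\varphi)^4\leq K_0^4\,t^4$, and since $|\rho_j|^2=(\sigma^4+|\xi'|_x^4)^{1/2}$ one has $\sigma^4\leq|\rho_j|^4$; hence the right-hand side is bounded by $C_1|\rho_j|^3 t+C_2 t^2|\rho_j|^2+|\rho_j|^4+K_0^4 t^4$. The second step uses the low-frequency hypothesis $|\rho_j|\leq t/C$, which gives $|\rho_j|^k t^{4-k}\leq t^4/C^k$ for $k=2,3,4$, so the right-hand side is at most $\big(C_1 C^{-3}+C_2 C^{-2}+C^{-4}+K_0^4\big)\,t^4$.

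It then remains to make the bracket $\leq 1$. I would first shrink $K_0$ further (staying below the threshold of Lemma~\ref{lemma: caracterisation Im pi2 <0}, hence not altering $C_1,C_2$) so that $K_0^4\leq 1/2$, and then pick $C\geq 1$ large, depending only on $C_1$ and $C_2$, so that $C_1 C^{-3}+C_2 C^{-2}+C^{-4}\leq 1/2$. With these choices the displayed inequality holds, and Lemma~\ref{lemma: caracterisation Im pi2 <0} yields $\Im\pi_{j,+}(x,s,\xi',\sigma,\tau)<0$ for $j=1,2$. The discrepancy between $|d_{x'}\varphi|$ and $|d_{x'}\varphi|_x$ is immaterial, as they are equivalent up to a fixed constant absorbable into $K_0$.

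I do not expect a genuine obstacle here: the substance of the statement is entirely contained in Lemma~\ref{lemma: caracterisation Im pi2 <0}, and the only point deserving care is the order in which constants are fixed — $K_0$ must be small enough both for the previous lemma to hold (which pins down $C_1,C_2$) and so that $K_0^4\leq 1/2$, after which $C$ is chosen last as a function of $C_1,C_2$.
\hfill$\Box$
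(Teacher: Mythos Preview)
Your proposal is correct and follows essentially the same route as the paper: reduce to the algebraic criterion of Lemma~\ref{lemma: caracterisation Im pi2 <0} and verify its hypothesis by bounding $\sigma^4\leq|\rho_j|^4$, $(\tau\partial_s\varphi)^4\leq K_0^4(\tau\partial_d\varphi)^4$, and each $|\rho_j|^k(\tau\partial_d\varphi)^{4-k}$ using $C|\rho_j|\leq\tau\partial_d\varphi$, then choosing $K_0$ small and $C$ large. Your handling of the order in which constants are fixed is in fact cleaner than the paper's sketch.
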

\begin{proof}
     It is sufficient to prove the estimate of  Lemma~\ref{lemma: caracterisation Im pi2 <0}.
From assumption we have 
\[
5  (\tau\partial_d\varphi)^4\geq C|\rho_j|^3(\tau\partial_d\varphi)+ C^2 (\tau \partial_d\varphi)^2|\rho_j|^2  +C^4|\rho_j|^4
+C^4\sigma^4 + (\tau\partial_s\varphi)^4,
\]  
if $K_0$ is sufficiently small and as $|\rho_j|^4=\sigma^4+|\xi'|^4_x$.
This estimate implies the one of Lemma~\ref{lemma: caracterisation Im pi2 <0} as $K_0$ can be chosen sufficiently 
small and $C$ sufficiently large.
\end{proof}

We consider the boundary differential operators
$B_1$ and $B_2$ of order $k_1$ and $k_2$ with $b_j(x,\xi)$ their
principal symbol, $j=1,2$. The associated conjugated operators 
\begin{align*}
  B_{j,\varphi} = e^{\tau \varphi} B_j e^{-\tau \varphi},
\end{align*}
have respective principal symbols 
\begin{align*}
  b_{j,\varphi} (s,x,\xi, \tau) = b_j(x,\xi+i\tau d_{z}\varphi),\quad j=1,2,~~~z=(s,x).
\end{align*}
We assume that the \LS condition holds for $(Q, B_1, B_2)$ as in
Definition \ref{def: LS} for any point $(s, x,\sigma,\xi')\in\R\times
\partial\Omega\times\R\times\R^{d-1}.$ We wish to know if the \LS condition can hold
for $(Q , B_1, B_2,\varphi),$ as given by the following
definition (in local coordinates for simplicity).
\begin{definition}\label{def: LS after conjugation}
  Let
  $(s,x,\sigma, \xi',\tau)\in\times\R\times
  \partial\Omega\times\R\times\R^{d-1}\times[0,+\infty)$ with
  $(\sigma,\xi',\tau)\neq(0,0,0)$. One says that the \LS condition holds for
  $(Q ,B_1, B_2, \varphi)$ at $(s,x,\sigma, \xi',\tau)$ if for
  any polynomial function $f(\xi_d)$ with complex coefficients there
  exist $c_1, c_2\in\C$ and a polynomial function
  $g(\xi_d)$ with complex coefficients such that, for all
  $\xi_d\in\C$
  \begin{equation*}
    f(\xi_d)=c_1 b_{1,\varphi}(s, x,\xi',\xi_d,\tau)
    +c_2b_{2,\varphi}(s,x,\xi',\xi_d,\tau)
    +g(\xi_d) q_{\varphi}^+(s, x,\sigma,\xi',\xi_d,\tau),
\end{equation*}
with
\begin{equation*}
 q_{\varphi}^+(s,x,\sigma,\xi',\xi_d,\tau) 
  =\prod\limits_{\Im r_j(x, \sigma,\xi',\tau)\geq
    0}(\xi_d-r_j(x, \sigma,\xi',\tau)),
\end{equation*}
where $r_j(x,\sigma,\xi',\tau),$ $j=1,\cdots,4,$ denote the complex
roots of $q_{\varphi}(s,x,\sigma,\xi',\xi_d,\tau)$ viewed as a polynomial in
$\xi_d$.
\end{definition}
\subsubsection{Discussion on the Lopatinski\u{\i}-\v{S}apiro condition
  according to the position of the roots}
\label{sec: discussion LS root positions}
With the assumption that $\partial_d\varphi>0,$ for any point
$(s, x,\sigma,\xi',\tau)$ at most two roots lie in the upper complex closed half-plane. We then enumerate the following cases. Here, we drop the dependence of the roots of the polynomial $q_{\varphi}^+(s,x,\sigma,\xi',\xi_d,\tau) $ on the variables $(x,\sigma,\xi',\tau).$
\begin{enumerate}
\item[•] Case 1 : No root lying in the upper complex closed
  half-plane, then $q_{\varphi}^+ (s,x,\sigma,\xi',\xi_d,\tau)=1$ and the
  \LS condition of Definition~\ref{def: LS after conjugation} holds trivially at $(s, x,\sigma,\xi',\tau)$.
\item[•] Case  2 : One root lying in the upper complex closed
  half-plane. Let us denote by  $r^+$ that root, then
  $q_{\varphi}^+ (s,x,\sigma,\xi',\xi_d,\tau)=\xi_d-r^+.$ With
  Definition~\ref{def: LS after conjugation}, for any choice of $f$, the polynomial
  function $\xi_d\mapsto
  f(\xi_d)-c_1b_{1,\varphi}(s,x,\xi',\xi_d,\tau)-c_2b_{2,\varphi}(s,x,\xi',\xi_d,\tau)$
  admits $r^+$ as a root for $c_1,c_2\in\C$ well
  chosen. Hence, the \LS condition holds  at
  $(s, x,\sigma,\xi',\tau)$ if and only
  if
  \begin{align*}
    b_{1,\varphi}(s,x,\xi',\xi_d=r^+,\tau) \neq 0
    \ \ \text{or} \ \
    b_{2,\varphi}(s,x,\xi',\xi_d=r^+,\tau)\neq 0.
  \end{align*}
\item[•] Case 3 : Two different roots lying in the upper complex
  closed half-plane. Let denote by $r^+_1$ and $r^+_2$ these
  roots. One has
  $q_{\varphi}^+ (s,x,\sigma,\xi',\xi_d,\tau)=(\xi_d-r^+_1)(\xi_d-r^+_2).$
  The \LS condition holds at $(s, x,\sigma,\xi',\tau)$ if and only
  if the complex numbers $r^+_1$ and $r^+_2$ are the roots of
  the polynomial function
  $\xi_d\mapsto
  f(\xi_d)-c_1b_{1,\varphi}(s,x,\xi',\xi_d,\tau)-c_2b_{2,\varphi}(s, x,\xi',\xi_d,\tau),$
  for $c_1,c_2$ well chosen. This reads
\begin{equation*}
\begin{cases}
f(r^+_1)= c_1b_{1,\varphi}(s,x,\xi',\xi_d=r^+_1,\tau)+c_2b_{2,\varphi}(s,x,\xi',\xi_d=r^+_1,\tau),\\
\\
f(r^+_2)= c_1b_{1,\varphi}(s,x,\xi',\xi_d=r^+_2,\tau)+c_2b_{2,\varphi}(s,x,\xi',\xi_d=r^+_2,\tau).
\end{cases}
\end{equation*}
Being able to solve this system in $c_1$ and $c_2$ for any $f$ is equivalent to having
\begin{equation*}
  \det 
\begin{pmatrix}
b_{1,\varphi}(s,x,\xi',\xi_d=r^+_1,\tau) & b_{2,\varphi}(s,x,\xi',\xi_d=r^+_1,\tau) \\[4pt]
b_{1,\varphi}(s,x,\xi',\xi_d=r^+_2,\tau) & b_{2,\varphi}(s,x,\xi',\xi_d=r^+_2,\tau)
\end{pmatrix}\neq 0.
\end{equation*}
 \item[•] Case 4 : A double root lying in the upper complex closed
  half-plane. Denote by $r^+$ this root; one has
  $q_{\varphi}^+(s,x,\sigma,\xi',\xi_d,\tau))=(\xi_d-r^+)^2.$ The
  \LS condition holds at  at $(x,\sigma,\xi',\tau)$ if and only if for any choice of $f$, the
  complex number $r^+$ is a double root of the polynomial function
  $\xi_d\mapsto f(\xi_d)-c_1
  b_{1,\varphi}(s,x,\xi',\xi_d,\tau)-c_2b_{2,\varphi}(s,x,\xi',\xi_d,\tau)$
  for some $c_1,c_2\in\C.$ Thus having the
  \LS condition is equivalent of having the following
  determinant condition,
  \begin{equation*}
    \det 
\begin{pmatrix}
b_{1,\varphi}(s,x,\xi',\xi_d=r^+,\tau) & b_{2,\varphi}(s,x,\xi',\xi_d=r^+,\tau) \\
\\
\partial_{\xi_d}b_{1,\varphi}(s,x,\xi',\xi_d=r^+,\tau) & \partial_{\xi_d}b_{2,\varphi}(s,x,\xi',\xi_d=r^+,\tau)
\end{pmatrix}\neq 0.
\end{equation*}
\end{enumerate}
Observe that case 4 can only occur if $\sigma=0$ and $\tau\partial_s\varphi=0$. 
In this case Lopatinski\u{\i}-\v{S}apiro condition for $(Q,B_1,B_2,\varphi)$ is equivalent to 
Lopatinski\u{\i}-\v{S}apiro condition for $(P,B_1,B_2)$ which has been assumed.
\\
\indent To carry on with the proof of  Proposition~\ref{prop: LS after conjugation} below, we
now only have to consider having 
\begin{align}
  \label{eq: condition tau small}
  \tau \partial_d \varphi \leq C  |\rho_j|,
\end{align}
for $j=1,2$ and $C>0.$
\begin{proposition}
  \label{prop: LS after conjugation}
  Let $x^0\in\partial\Omega$ and $s^0\in\R$. Assume that the \LS condition holds for
  $(Q, B_1, B_2)$ at $(s^0,x^0)$ and thus in a compact neighborhood $V^0$
  of $(s^0,x^0)$. Assume also that
  $\partial_d\varphi\geq C_1> 0$ in $V^0$. There exist $\mu_0>0$ if
  $(s,x,\sigma,\xi',\tau)\in\R\times V^0\times\times\R^{d-1}\times[0,+\infty) $ with
  $(\sigma,\xi',\tau)\neq (0,0, 0)$,
  \begin{align*}
   |\partial_s\varphi(s,x)|+|d_{x'}\varphi(s,x)|\leq \mu_0\partial_d\varphi,
\ \   \text{and} \  \
\Im \pi_{j,+}(s,x,\sigma,\xi',\tau)\geq 0
  \end{align*}
    then the \LS condition holds for $(Q , B_1, B_2, \varphi)$ at
  $(s,x,\sigma,\xi',\tau).$
\end{proposition}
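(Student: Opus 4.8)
The plan is to argue by contradiction, combined with a homogeneity and compactness reduction. First I would recast the \LS condition for $(Q,B_1,B_2,\varphi)$ at the point as a single nonvanishing determinant. Since $\partial_d\varphi\geq C_1>0$ on $V^0$, whenever $\gamma_j(\varrho')\notin\R^-$ the root $\pi_{j,-}$ of \eqref{lsaugmt7} lies in the open lower half-plane, so under the hypothesis $\Im\pi_{j,+}\geq 0$ ($j=1,2$) the roots of $q_\varphi$ in the closed upper half-plane are exactly $\pi_{1,+}$ and $\pi_{2,+}$. If $\pi_{1,+}=\pi_{2,+}$, one is in Case~4 of Section~\ref{sec: discussion LS root positions}, which forces $\sigma=0$ and $\tau\partial_s\varphi=0$ and for which the \LS condition for $(Q,B_1,B_2,\varphi)$ coincides with that for $(P,B_1,B_2)$, which is assumed; hence I may assume $\pi_{1,+}\neq\pi_{2,+}$, i.e.\ Case~3. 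By the discussion there, the \LS condition at the point is then equivalent to $D_\varphi\neq 0$, where
\begin{equation*}
 D_\varphi=\det\begin{pmatrix} b_{1,\varphi}(s,x,\xi',\pi_{1,+},\tau) & b_{2,\varphi}(s,x,\xi',\pi_{1,+},\tau)\\ b_{1,\varphi}(s,x,\xi',\pi_{2,+},\tau) & b_{2,\varphi}(s,x,\xi',\pi_{2,+},\tau)\end{pmatrix},
\end{equation*}
and, since $\pi_{m,+}+i\tau\partial_d\varphi=i\alpha_m$, one has $b_{j,\varphi}(s,x,\xi',\pi_{m,+},\tau)=b_j(x,\xi'+i\tau d_{x'}\varphi,i\alpha_m)$, so $D_\varphi=\det\big(b_j(x,\xi'+i\tau d_{x'}\varphi,i\alpha_m)\big)_{j,m=1,2}$.

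Next I would suppose no $\mu_0$ works: for every $n$ there is $\varrho_n=(s_n,x_n,\sigma_n,\xi'_n,\tau_n)$ with $x_n\in V^0$, $(\sigma_n,\xi'_n,\tau_n)\neq(0,0,0)$, satisfying $|\partial_s\varphi|+|d_{x'}\varphi|\leq\tfrac1n\partial_d\varphi$ at $(s_n,x_n)$ and $\Im\pi_{j,+}\geq 0$, at which the \LS condition for $(Q,B_1,B_2,\varphi)$ fails; by the reduction, $\pi_{1,+}\neq\pi_{2,+}$ and $D_\varphi=0$ at $\varrho_n$. Because $D_\varphi$, $\pi_{j,\pm}$ and $\alpha_j$ are homogeneous in $(\sigma,\xi',\tau)$, and $(\sigma_n,\xi'_n)\neq(0,0)$ in this regime (otherwise $\Im\pi_{j,+}\geq 0$ together with $\partial_d\varphi>0$ would force $\tau_n=0$, which is impossible), I would rescale so that $(\sigma_n^4+|\xi'_n|^4_{x_n})^{1/4}=1$. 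Lemma~\ref{low-frequency} (or a direct estimate using the gradient smallness) then gives $\tau_n\partial_d\varphi(s_n,x_n)\lesssim 1$, hence $\tau_n\lesssim C_1^{-1}$; so the rescaled points lie in a compact set and, passing to a subsequence, converge to $(s_*,x_*,\sigma_*,\xi'_*,\tau_*)$ with $x_*\in V^0$, $\sigma_*^4+|\xi'_*|^4_{x_*}=1$ (so $(\sigma_*,\xi'_*)\neq(0,0)$), $\tau_*\geq 0$, and $\partial_s\varphi(s_*,x_*)=d_{x'}\varphi(s_*,x_*)=0$ since $\partial_d\varphi$ is bounded on $V^0$.

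Passing to the limit, $\gamma_m$ evaluated at $\varrho_n$ tends to $(-1)^m i\sigma_*^2+|\xi'_*|^2_{x_*}=:(\alpha^*_m)^2\notin\R^-$, whence $\alpha_m\to\alpha^*_m$, $\xi'_n+i\tau_n d_{x'}\varphi(s_n,x_n)\to\xi'_*$, and $D_\varphi\to D^*_\varphi:=\det\big(b_j(x_*,\xi'_*,i\alpha^*_m)\big)_{j,m=1,2}$, while $D_\varphi=0$ along $\varrho_n$ forces $D^*_\varphi=0$. If $\sigma_*\neq 0$, then $i\alpha^*_1,i\alpha^*_2$ are the two distinct roots of $q(x_*,\sigma_*,\xi'_*,\cdot)$ with positive imaginary part, i.e.\ $\{\rho_1,\rho_2\}$ of \eqref{rac1}--\eqref{rac2} at $(x_*,\sigma_*,\xi'_*)$; hence $D^*_\varphi$ equals, up to sign, the determinant \eqref{Lopaugm1} of the \LS condition for $(Q,B_1,B_2)$ at $(x_*,\sigma_*,\xi'_*)$, which is nonzero because that condition holds throughout $V^0$ by Lemma~\ref{lemma: perturbation LS}. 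This contradicts $D^*_\varphi=0$.

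The remaining case $\sigma_*=0$ is the crux: then $\alpha^*_1=\alpha^*_2=|\xi'_*|_{x_*}>0$, so $D^*_\varphi=0$ automatically and one more order of information is required, exactly as in the proof of Proposition~\ref{lien-LS}. Setting $h_n:=\pi_{2,+}-\pi_{1,+}=i(\alpha_2-\alpha_1)$ at $\varrho_n$, one has $h_n\to 0$ and $h_n\neq 0$ (Case~3); row-reducing $D_\varphi$ and dividing by $h_n$ gives $0=D_\varphi/h_n=\det\begin{pmatrix} b_{1,\varphi}(\pi_{1,+}) & b_{2,\varphi}(\pi_{1,+})\\ \delta_{1,n} & \delta_{2,n}\end{pmatrix}$, where $\delta_{j,n}$ is the divided difference of $\xi_d\mapsto b_{j,\varphi}$ at $\pi_{1,+},\pi_{2,+}$. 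Since these symbols are polynomials in $\xi_d$ with convergent coefficients and $\pi_{1,+},\pi_{2,+}\to\pi^*_{1,+}$, one gets $\delta_{j,n}\to(\partial_{\xi_d}b_j)(x_*,\xi'_*,i|\xi'_*|_{x_*})$ and $b_{j,\varphi}(\pi_{1,+})\to b_j(x_*,\xi'_*,i|\xi'_*|_{x_*})$, so in the limit $0=\det\begin{pmatrix} b_1 & b_2\\ \partial_{\xi_d}b_1 & \partial_{\xi_d}b_2\end{pmatrix}(x_*,\xi'_*,i|\xi'_*|_{x_*})$, the determinant \eqref{LS-bi} of the \LS condition for $(P,B_1,B_2)$ at $(x_*,\xi'_*)$, which is nonzero by hypothesis --- a contradiction. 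This exhausts all cases. The hard part is precisely this last step: as $\sigma\to 0$ the roots $\pi_{1,+}$ and $\pi_{2,+}$ coalesce, so the ``two distinct roots'' determinant degenerates and must be replaced by its divided-difference version, which is what transfers the \LS condition for $Q$ onto the \LS condition for $P$ in the limit; the rest of the argument is soft (continuity, homogeneity, compactness).
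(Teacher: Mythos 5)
The student takes a genuinely different route from the paper. The paper's proof is a direct, quantitative perturbation argument: Lemma~\ref{lemma: small perturbation} gives the uniform bound $|i\alpha_j-\rho_j|+\tau|d_{x'}\varphi|\lesssim\mu_0|\rho_j|$, and then stability of the \LS determinant (Lemma~\ref{lemma: perturbation LS}) is invoked in each of Cases~2, 3, 4, which yields an explicit smallness threshold $\mu_0$ via \eqref{eq: uniform condition mu0 mu1 small}. Your proof instead argues by contradiction combined with homogeneity and compactness: you rescale a putative sequence of failures so that $(\sigma_n^4+|\xi'_n|^4_{x_n})^{1/4}=1$, use Lemma~\ref{low-frequency} to bound $\tau_n$, extract a convergent subsequence with $\partial_s\varphi,d_{x'}\varphi\to 0$ in the limit, and show the limiting determinant is, up to sign, either the determinant \eqref{Lopaugm1} for $(Q,B_1,B_2)$ when $\sigma_*\neq 0$ (using $i\alpha^*_2=\rho_1$, $i\alpha^*_1=\rho_2$ after unwinding the complex square roots) or the derivative determinant \eqref{LS-bi} for $(P,B_1,B_2)$ when $\sigma_*=0$ via a divided-difference row operation. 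Your $\sigma_*=0$ step is the same idea as the paper's Proposition~\ref{lien-LS}, just deployed here in a limit rather than as a separate lemma. What you gain is that you never need the somewhat restrictive Lemma~\ref{lemma: perturbation LS}; what you lose is any effective control on $\mu_0$.

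There is, however, a gap. You reduce at the outset to the situation where both $\pi_{1,+}$ and $\pi_{2,+}$ lie in the closed upper half-plane ("under the hypothesis $\Im\pi_{j,+}\geq 0$ ($j=1,2$) \ldots exactly $\pi_{1,+}$ and $\pi_{2,+}$"), i.e.\ to Cases~3 and~4 of Section~\ref{sec: discussion LS root positions}. But the proposition also must cover Case~2, where exactly one of $\pi_{1,+},\pi_{2,+}$ lies in the closed upper half-plane; the paper handles this case separately, and it cannot be dropped since the downstream Carleman estimate requires \LS for $(Q,B_1,B_2,\varphi)$ at all $(\sigma,\xi',\tau)$, including those where the root configuration changes. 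In Case~2 the \LS condition at the point is the nonvanishing of the vector $(b_{1,\varphi}(r^+),b_{2,\varphi}(r^+))$, not of your $2\times 2$ determinant $D_\varphi$, so the contradiction you derive from $D_\varphi=0$ does not speak to this case. The fix is routine within your framework (pass to the limit in $b_{j,\varphi}(\pi_{j,+})=0$ for $j=1,2$; for $\sigma_*\neq 0$ a zero row kills the $(Q,B_1,B_2)$ determinant, for $\sigma_*=0$ it kills the first column of the $(P,B_1,B_2)$ derivative determinant), but as written your proof is silent on it, and the reduction at the start explicitly excludes it.
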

\indent The proof of Proposition \ref{prop: LS after conjugation} relies on the following lemma.
\begin{lemma}
  \label{lemma: small perturbation}
  There exists $C>0$ such that for  $0<\mu_0\leq 1$, one has
\begin{equation*}
  |\partial_s\varphi(s,x)|+|d_{x'}\varphi(s,x)|\leq \mu_0\partial_d\varphi,
\ \   \text{and} \  \
\Im \pi_{j,+}(s,x,\xi',\sigma,\tau)\geq 0  
\Rightarrow  
\big| i \alpha_j-\rho_j\big|+\tau|d_{x'}\varphi|   
\leq  (1+C)\mu_0 |\rho_j|.
\end{equation*}
\end{lemma}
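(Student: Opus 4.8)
The plan is to convert the hypothesis $\Im\pi_{j,+}\ge0$ into a pointwise smallness statement for $\alpha_j$, and then bootstrap it into the two desired bounds. First I would record that $\pi_{j,+}=-i\tau\partial_d\varphi+i\alpha_j$ with $\Re\alpha_j>0$ (in particular $\gamma_j\notin\R^-$, so $\alpha_j$ is well defined under the hypothesis), whence $\Im\pi_{j,+}=\Re\alpha_j-\tau\partial_d\varphi$; thus $\Im\pi_{j,+}\ge0$ is exactly $\tau\partial_d\varphi\le\Re\alpha_j\le|\alpha_j|$. Combined with $|\partial_s\varphi|+|d_{x'}\varphi|\le\mu_0\partial_d\varphi$ this gives the key inequality $\tau|\partial_s\varphi|+\tau|d_{x'}\varphi|\le\mu_0\,\tau\partial_d\varphi\le\mu_0|\alpha_j|$: the full perturbation of $\varphi$ entering $\gamma_j$ is controlled by $\mu_0|\alpha_j|$.

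Next I would show $|\alpha_j|\lesssim|\rho_j|$. From $|\alpha_j|^2=|\gamma_j|$ and $\gamma_j=(-1)^j i(\sigma+i\tau\partial_s\varphi)^2+r(x,\xi'+i\tau d_{x'}\varphi)$, the triangle inequality together with $|\tilde r(x,\xi',d_{x'}\varphi)|\le|\xi'|_x|d_{x'}\varphi|_x$ yields $|\gamma_j|\le\sigma^2+|\xi'|_x^2+(\tau|\partial_s\varphi|+\tau|d_{x'}\varphi|_x)^2\le\sigma^2+|\xi'|_x^2+\mu_0^2|\alpha_j|^2$ by the first step. Since $\sigma^2+|\xi'|_x^2\le\sqrt2\,(\sigma^4+|\xi'|_x^4)^{1/2}=\sqrt2\,|\rho_j|^2$, one gets $(1-\mu_0^2)|\alpha_j|^2\le\sqrt2\,|\rho_j|^2$, hence $|\alpha_j|\le C'|\rho_j|$ (for $\mu_0$ bounded away from $1$, which is harmless since the lemma is used for small $\mu_0$). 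Together with the first step this already gives the easy half of the conclusion, $\tau|d_{x'}\varphi|\le C'\mu_0|\rho_j|$.

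The main point is the bound on $|i\alpha_j-\rho_j|$, which I would treat as a stable inversion of the squaring map. Using $(i\alpha_j)^2=-\gamma_j$ and $\rho_j^2=-\gamma_j\big|_{\tau=0}$ (here $\rho_j$ is the root with positive imaginary part of the unperturbed factor $\xi_d^2+\gamma_j|_{\tau=0}$, and recall $|\rho_1|=|\rho_2|$ so the right-hand side of the lemma is insensitive to the labelling), one has $(i\alpha_j)^2-\rho_j^2=-(\gamma_j-\gamma_j|_{\tau=0})$, and every monomial of $\gamma_j-\gamma_j|_{\tau=0}$ carries at least one factor $\tau\partial_s\varphi$ or $\tau d_{x'}\varphi$; bounding $|\sigma|,|\xi'|_x\le|\rho_j|$ and using $\tau|\partial_s\varphi|,\tau|d_{x'}\varphi|\le\mu_0|\alpha_j|\le\mu_0 C'|\rho_j|$, this gives $|(i\alpha_j)^2-\rho_j^2|\lesssim\mu_0|\rho_j|^2$ for $\mu_0\le1$. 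I would then factor $(i\alpha_j)^2-\rho_j^2=(i\alpha_j-\rho_j)(i\alpha_j+\rho_j)$ and bound $|i\alpha_j+\rho_j|$ from below: $\Im(i\alpha_j)=\Re\alpha_j>0$, while from \eqref{rac1}--\eqref{rac2} one has $\Im\rho_j=\big((\sigma^4+|\xi'|_x^4)^{1/2}+|\xi'|_x^2\big)^{1/2}/\sqrt2\ge|\rho_j|/\sqrt2$, so $|i\alpha_j+\rho_j|\ge\Im(i\alpha_j+\rho_j)\ge|\rho_j|/\sqrt2$. Dividing gives $|i\alpha_j-\rho_j|\lesssim\mu_0|\rho_j|$, and adding the estimate from the second step yields $|i\alpha_j-\rho_j|+\tau|d_{x'}\varphi|\le(1+C)\mu_0|\rho_j|$ for a suitable $C>0$.

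The step I expect to be the main obstacle is the lower bound $|i\alpha_j+\rho_j|\gtrsim|\rho_j|$, since this is precisely what makes the difference-of-square-roots argument quantitative: it relies on $\rho_j$ staying a definite distance (at scale $|\rho_j|$) inside the open upper half-plane — equivalently, that the relevant root of the unperturbed symbol is not pinched toward the real axis — which is read off the explicit expressions for $\rho_1,\rho_2$. A secondary bookkeeping issue is to keep the various smallness thresholds on $\mu_0$ consistent (and to absorb the equivalence of the Euclidean norm $|d_{x'}\varphi|$ and the metric norm $|d_{x'}\varphi|_x$ into constants) so that a single constant $C$ works uniformly.
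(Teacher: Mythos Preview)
Your argument is correct and follows essentially the same route as the paper: both proofs reduce to the difference-of-squares identity $(i\alpha_j)^2-\rho_j^2=(i\alpha_j-\rho_j)(i\alpha_j+\rho_j)$ (the paper writes it as $\alpha_j^2-\tilde\rho_j^2$ with $\rho_j=i\tilde\rho_j$), bound the difference $|(i\alpha_j)^2-\rho_j^2|\lesssim\mu_0|\rho_j|^2$ by noting that every extra term in $\gamma_j$ carries a factor $\tau\partial_s\varphi$ or $\tau d_{x'}\varphi$, and obtain the lower bound on $|i\alpha_j+\rho_j|$ from $\Im\rho_j\ge|\rho_j|/\sqrt2$. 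The one genuine difference is that you derive $\tau\partial_d\varphi\le|\alpha_j|\lesssim|\rho_j|$ internally via a quadratic inequality on $|\alpha_j|$, whereas the paper imports the bound $\tau\partial_d\varphi\le C|\rho_j|$ from the contrapositive of the preceding low-frequency lemma; your version is thus self-contained but, as you note, needs $\mu_0$ bounded away from $1$, which is exactly how the lemma is used downstream. Your remark on the $\alpha_j$/$\rho_j$ labeling (that $|\rho_1|=|\rho_2|$ makes the conclusion insensitive to it) is a point the paper leaves implicit.
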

We recall that $\alpha_j$ are defined at the beginning of Section~\ref{sec: Root configuration for each factor}, and
$\rho_j$ at \eqref{rac1} and  \eqref{rac2}.  
\begin{proof}
 For $j=1,2$
and $\ell=1,2$ we write 
\begin{align}
 \label{lop6}
  b_{\ell,\varphi} (s,x,\xi',\xi_d =\pi_{j,+},\tau)
  &=b_\ell(x,\xi'+i\tau d_{x'}\varphi, \pi_{j,+}+i\tau\partial_d\varphi) 
    = b_\ell(x,\xi'+i\tau d_{x'}\varphi, i\alpha_j)\notag\\
  &=b_\ell(x,\xi'+i\tau d_{x'}\varphi, i\tilde{\rho}_j+i(\alpha_j-\tilde{\rho}_j)),
\end{align}
where $\rho_j=i\tilde{\rho}_j.$
With the assumption that $\Im\pi_{j,+}\geq 0,$ we have $\Re(\alpha_j)\geq (\tau\partial_d\varphi)$ and $$4(\tau\partial_d\varphi)^2\Re\alpha_j^2-4(\tau\partial_d\varphi)^4+(\Im(\alpha^2_j))^2\geq 0.$$ 
With (\ref{eq: condition tau small}), one has 
\begin{align}
    \label{eq: est 1 small perturbation}
    \tau|d_{x'}\varphi| \leq \mu_0 \tau \partial_d\varphi
    \lesssim \mu_0  |\tilde{\rho}_j|.
  \end{align}
  With the first-order Taylor formula one has
  \begin{align*}
    \alpha_j^2 &= r(x,\xi'+i\tau d_{x'}\varphi)+(-1)^ji(\sigma+i\tau\partial_s\varphi)^2\\ 
               &= r(x,\xi')
                 +\int_{0}^1 d_{\xi'}r(x,\xi'+i \tau \theta\, d_{x'}\varphi)
                 (i \tau d_{x'}\varphi)d\theta
                 +\underbrace{(-1)^ji\sigma^2-2(-1)^j\sigma(\tau\partial_s\varphi)-(-1)^j(\tau\partial_s\varphi)^2}_{:=A}.
  \end{align*}
  With \eqref{eq: est 1 small perturbation} and 
  as $d_{\xi'} $ is linear with respect the second variable one  has
  \begin{align*}
    \big|
    d_{\xi'}r(x,\xi'+i \tau \theta\, d_{x'}\varphi)  (i \tau d_{x'}\varphi)
    \big|
    \lesssim \mu_0  |\xi'|_x|\Tilde{\rho}_j|+\mu_0^2|\Tilde{\rho}_j|^2 \lesssim \mu_0|\Tilde{\rho}_j|^2,
  \end{align*}
since $|\xi'|_x\leq |\Tilde{\rho}_j|.$ On the one hand, 
 since $r(x,\xi') = |\xi'|_x^2$ and $A\lesssim \mu_0|\rho|^2_j$, this yields
  $$\alpha_j^2  = \Tilde{\rho}_j^2+  O (\mu_0 |\Tilde{\rho}_j|^2)\Longleftrightarrow \alpha^2_j-\Tilde{\rho}_j^2= O (\mu_0 |\Tilde{\rho}_j|^2).$$ 
  On the other hand as $\Re \alpha_j>0$ and $\Re \Tilde{\rho}_j\ge 0$, 
 $|\alpha_j+\Tilde{\rho}_j|\geq |\Re(\alpha_j+\Tilde{\rho}_j)|\geq \Re(\Tilde{\rho}_j)\geq \frac{\sqrt 2}{2} |\Tilde{\rho}_j|$. 
 Therefore 
 $$\alpha^2_j-\Tilde{\rho}_j^2=(\alpha_j-\Tilde{\rho}_j)(\alpha_j+\Tilde{\rho}_j)=O (\mu_0 |\Tilde{\rho}_j|^2)~~\text{and}~~|\alpha_j-\Tilde{\rho}_j|=O (\mu_0 |\Tilde{\rho}_j|).$$ 
 This and (\ref{eq: est 1 small perturbation}) give the result.
\end{proof}
We now consider the root configurations that remain to consider according to the
discussion in Section \ref{sec: discussion LS root positions}.\\
 We choose $0< \mu_0\leq 1$ such that
  \begin{align}
    \label{eq: uniform condition mu0 mu1 small}
    \mu_0 (1+C) \leq \eps,
  \end{align}
with $C>0$ as given by Lemma~\ref{lemma: small perturbation}.
\\

\noindent 
{\bfseries Case 2.}\\
In this case, one root of $q_{\varphi}$ lies in the upper complex closed
half-plane. We denote this root by $\rho^+$. According to the
discussion in Section~\ref{sec: discussion LS root positions} it
suffices to prove that
\begin{align}\label{ee1}
    b_{1,\varphi}(s,x,\xi',\xi_d=r^+,\tau) \neq 0
    \ \ \text{or} \ \
    b_{2,\varphi}(s,x,\xi',\xi_d=r^+,\tau)\neq 0.
  \end{align}
  In fact, one has $r^+ = \pi_{j,+}$ with $j=1$ or $2$.  
As $(Q,B_1,B_2)$  satisfies the Lopatinski\u{\i}-\v{S}apiro,
and from Lemma~\ref{lemma: small perturbation} we have $|\rho_j-\pi_{j,+}|\lesssim \mu_0 |\rho_j|$.
 From Lemma~\ref{lemma: perturbation LS} Lopatinski\u{\i}-\v{S}apiro is also satisfied at $|\pi_{j,+}|$. 
 This means that \eqref{ee1} is satisfied.
  \\
 
  \noindent 
  {\bfseries Case 3.}\\
  In this case $\Im \pi_{1,+}>0$ and $\Im \pi_{2,+} >0$.  According to the
  discussion in Section~\ref{sec: discussion LS root positions} it
  suffices to prove that
  \begin{equation}\label{eq: LS case 3}
  \det 
\begin{pmatrix}
  b_{1,\varphi}(s,x,\xi',\xi_d=\pi_{1,+},\tau) & b_{2,\varphi}(s,x,\xi',\xi_d=\pi_{1,+},\tau) \\
  \\
  b_{1,\varphi}(s,x,\xi',\xi_d=\pi_{2,+},\tau) & b_{2,\varphi}(s,x,\xi',\xi_d=\pi_{2,+},\tau)
\end{pmatrix}\neq 0.
\end{equation}
We use again
  Lemma~\ref{lemma: perturbation LS} with
  $\zeta' = i \tau d_{x'} \varphi$, 
  $\delta = i(\alpha_1 -  \Tilde{\rho}_j)$, and $\tilde{\delta} = i (\alpha_2 -  \Tilde{\rho}_j)$. With \eqref{lop6} and \eqref{eq:
    uniform condition mu0 mu1 small} with Lemma~\ref{lemma:
    perturbation LS} one obtains \eqref{eq: LS case 3}.
    \\
    
  \noindent 
  {\bfseries Case 4.}\\
  In this case (that only occurs if $\sigma=0$) the \LS condition holds also if one has 
\begin{equation}
  \label{eq: LS case 2}
    \det
    \begin{pmatrix}
      b_{1,\varphi} &b_{2,\varphi}\\[2pt]
      \partial_{\xi_d}b_{1,\varphi}& \partial_{\xi_d}b_{2,\varphi} 
    \end{pmatrix}(s,x,\xi',\xi_d=\rho^+,\tau) \neq 0.
  \end{equation}
The proof uses again Lemma \ref{lemma: perturbation LS} (Case 2 in the proof of the Lemma) . This concludes the proof of
  Proposition~\ref{prop: LS after conjugation}.\hfill \qedsymbol \endproof
%
%
 \subsection{Local stability of the algebraic conditions}
\label{sec: Stability LS-conj}
In Section~\ref{sec: LS condition} we saw that the \LS condition for
$(Q, B_1, B_2)$ in Definition~\ref{def: LS} exhibits some stability
property.  This was used in the statement of Proposition~\ref{prop: LS
  after conjugation} that states how the \LS condition for
$(Q, B_1, B_2)$ can imply the \LS condition of
Definition~\ref{def: LS after conjugation} for
$(Q, B_1, B_2, \varphi)$, that is, the version of this condition
for the conjugated operators.

Below one
exploits the algebraic conditions listed in Section~\ref{sec: discussion LS root positions} once the
\LS condition is known to hold at a point $\y^{0\prime}  = (s^0,x^0,\sigma^0,
\xi^{0\prime}, \tau^0,\gamma^0,\eps^0)$  where $(s^0,x^0,\sigma^0,
\xi^{0\prime)}$ is in tangential
phase space. 
By abuse of notation we write $\gamma $ and $\eps$ in $\y^{\prime}$ to take account that 
$\varphi$ depends on $\gamma $ and $\eps$. 
One thus rather needs to know that these algebraic
conditions are stable. Here also the answer is positive and is the
subject of the present section.

\medskip

As in Definition~\ref{def: LS after conjugation} for $\y' =(s, x,\xi',\sigma,\tau,\gamma,\eps)$ one denotes by
$\pi_j (\y')$
the roots of $q_\varphi(\y)$ viewed as a polynomial in
$\xi_d$.

 Let $\y^{0\prime}  = (s^0,x^0,\sigma^0,
\xi^{0\prime}, \tau^0,\gamma^0,\eps^0)\in\R\times \partial
 \Omega \times\R\times \R^{d-1}\times [0,+\infty) \times [1,+\infty)\in (0,1]$.
 One sets
 \begin{align*}
   &\mathscr{E}^+= \big\{ j \in \{1,2,3,4\}; \ \Im \pi_j (\y^{0\prime}) \geq 0\}, \\
   &\mathscr{E}^-= \big\{ j \in \{1,2,3,4\}; \ \Im \pi_j (\y^{0\prime}) < 0\},
 \end{align*}
 and, for $\y' =(s, x,\xi',\sigma,\tau,\gamma,\eps)$,
 \begin{align*}
   \k^+_{\y^{0\prime}} (\y') 
   = \prod_{j \in \mathscr{E}^+} \big(\xi_d -\pi_j(\y')\big), 
   \qquad 
   \k^-_{\y^{0\prime}} (\y') 
   = \prod_{j \in \mathscr{E}^-} \big(\xi_d -\pi_j(\y')\big). 
 \end{align*}
 Naturally, one has $\k^+_{\y^{0\prime}} (\y^{0\prime},\xi_d) = q_{\varphi}^+(s^0,x^0,\sigma^0,
 \xi^{0\prime}, \xi_d, \tau^0)$ and $\k^-_{\y^{0\prime}} (\y^{0\prime},\xi_d) = q_{\varphi}^-(s^0,x^0,\sigma^0,
 \xi^{0\prime}, \xi_d, \tau^0)$. 
 Moreover, there exists a conic open \nhd $\U_0$ of
 $\y^{0\prime}$ where both $\k^+_{\y^{0\prime}}(\y')$ and
 $\k^-_{\y^{0\prime}}(\y')$ are smooth with respect to $\y'$.
One has 
\begin{align*}
  q_{\varphi} = q_{\varphi}^+ q_{\varphi}^- 
  = \k^+_{\y^{0\prime}} \k^-_{\y^{0\prime}} .
 \end{align*}
 Note however that for $\y' =(s, x,\xi',\sigma,\tau,\gamma,\eps)\in \U_0$ it may very well happen that
 $q_{\varphi}^+(\y) \neq \k^+_{\y^{0\prime}} (\y',\xi_d)$
 and  $q_{\varphi}^-(\y) \neq \k^-_{\y^{0\prime}} (\y',\xi_d)$.

 The following proposition can be found in \cite[proposition 1.8]{ML}.
 \begin{proposition}
   \label{prop: stability LS algegraic conditions}
   Let the \LS condition hold at $\y^{0\prime}  = (s^0,x^0,\sigma^0,
\xi^{0\prime}, \tau^0,\gamma^0,\eps^0)\in\R\times \partial
 \Omega \times\R\times \R^{d-1}\times [0,+\infty) \times [1,+\infty)\times (0,1]$ for
 $(Q, B_1, B_2, \varphi)$. Then,
 \begin{enumerate}
 \item The polynomial
 $\xi_d \mapsto q_{\varphi}^+(\y^{0\prime})$ is of degree less than or equal to
 two.
 \item There exists a conic open \nhd $\U$ of $\y^{0\prime}$ such that
   the family 
   $\{b_{1,\varphi}(\y',\xi_d),b_{2,\varphi}(\y',\xi_d)\}$ is complete
   modulo $\k^+_{\y^{0\prime}} (\y',\xi_d)$ at every point $\y' \in \U$,
   namely for
  any polynomial $f(\xi_d)$ with complex coefficients there
  exist $c_1, c_2\in\C$ and a polynomial
  $\ell(\xi_d)$ with complex coefficients such that, for all
  $\xi_d\in\C$
  \begin{equation}
    \label{eq: extended LS condition}
    f(\xi_d)=c_1 b_{1,\varphi}(s,x,\xi',\xi_d,\tau)
    +c_2b_{2,\varphi}(s,x,\xi',\xi_d,\tau)
    +\ell(\xi_d) \k^+_{\y^{0\prime}}(\y',\xi_d).
\end{equation}
\end{enumerate}
\end{proposition}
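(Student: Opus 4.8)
The plan is to reduce the \LS condition at $\y^{0\prime}$ to the nonvanishing of a single continuous, positively homogeneous scalar quantity $\Delta(\y')$ built from $b_{1,\varphi}$, $b_{2,\varphi}$ and the coefficients of $\k^+_{\y^{0\prime}}$, and then to propagate that nonvanishing to a conic \nhd of $\y^{0\prime}$ by continuity and homogeneity. Assertion (1) is the easy part, and is a purely structural fact independent of the boundary operators: since $q_{\varphi}=\ell_2\ell_1$ with each $\xi_d\mapsto\ell_j(\y)$ a monic quadratic with roots $\pi_{j,\pm}=-i\tau\partial_d\varphi\pm i\alpha_j$ (or $-i\tau\partial_d\varphi\pm\beta$ when $\gamma_j\in\R^-$), and since $\partial_d\varphi>0$ forces $\Im\pi_{j,-}<0$ for $j=1,2$ (Section~\ref{sec: Root configuration for each factor}), at most the two roots $\pi_{1,+}$ and $\pi_{2,+}$ can lie in the closed upper half-plane; hence $q_{\varphi}^+(\y^{0\prime})$, being the product of the roots with nonnegative imaginary part, has degree at most two.

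For (2), I would first fix the conic open \nhd $\U_0$ of $\y^{0\prime}$ provided just before the statement, on which $\k^+_{\y^{0\prime}}(\y')$ is smooth, monic in $\xi_d$, and of constant degree $m^+:=\#\mathscr{E}^+\in\{0,1,2\}$. The key algebraic observation is that completeness of $\{b_{1,\varphi}(\y',\cdot),b_{2,\varphi}(\y',\cdot)\}$ modulo $\k^+_{\y^{0\prime}}(\y',\cdot)$ is equivalent to the residue classes of $b_{1,\varphi}$ and $b_{2,\varphi}$ spanning the $m^+$-dimensional ring $\C[\xi_d]/(\k^+_{\y^{0\prime}})$ — which in particular re-proves that $m^+\le 2$ is necessary. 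Spelling this out exactly as in the case analysis of Section~\ref{sec: discussion LS root positions}: if $m^+=0$ completeness is automatic; if $m^+=1$ it amounts to $(b_{1,\varphi},b_{2,\varphi})(\y',r^+(\y'))\neq(0,0)$, where $r^+(\y')$ is the root of $\k^+_{\y^{0\prime}}$; if $m^+=2$ with distinct roots $r^+_1,r^+_2$ it is the nonvanishing of the determinant of $\big(b_{\ell,\varphi}(\y',r^+_k(\y'))\big)_{1\le k,\ell\le 2}$; and if $m^+=2$ with a double root $r^+$ it is the nonvanishing of the determinant with first row $(b_{1,\varphi},b_{2,\varphi})$ and second row $(\partial_{\xi_d}b_{1,\varphi},\partial_{\xi_d}b_{2,\varphi})$, all evaluated at $\xi_d=r^+(\y')$. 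In each case this criterion reads $\Delta(\y')\neq 0$ for a function $\Delta$ that is continuous on $\U_0$ (one may take $\Delta=|b_{1,\varphi}|^2+|b_{2,\varphi}|^2$ at the root in the case $m^+=1$, and the relevant determinant otherwise) and positively homogeneous in $(\sigma,\xi',\tau)$, since the $b_{\ell,\varphi}(\y',\cdot)$ are homogeneous of degree $k_\ell$ and the roots $\pi_j(\y')$ are homogeneous of degree one.

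With this reformulation the conclusion is immediate: the assumed \LS condition at $\y^{0\prime}$ is exactly completeness modulo $\k^+_{\y^{0\prime}}(\y^{0\prime},\cdot)=q_{\varphi}^+(\y^{0\prime})$, so $\Delta(\y^{0\prime})\neq 0$; by continuity $\Delta$ does not vanish on some \nhd of $\y^{0\prime}$, and by homogeneity in $(\sigma,\xi',\tau)$ this \nhd may be shrunk to a conic one, which is the desired $\U$. On $\U$, given any polynomial $f(\xi_d)$, one solves the now-invertible linear system for $c_1,c_2$ and divides $f-c_1b_{1,\varphi}-c_2b_{2,\varphi}$ by the monic polynomial $\k^+_{\y^{0\prime}}$ to produce $\ell$, yielding \eqref{eq: extended LS condition}.

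The one delicate point, which I would treat carefully, is the behaviour at coincidences of the roots $\pi_j$ inside $\U_0$: two roots of $\k^+_{\y^{0\prime}}$ may merge, or a double root at $\y^{0\prime}$ may split, and the naive "distinct roots" determinant looks singular there. This is harmless precisely because $\k^+_{\y^{0\prime}}$ is defined by continuation of a \emph{fixed} family of roots and is smooth throughout $\U_0$, so $\C[\xi_d]/(\k^+_{\y^{0\prime}})$ has locally constant dimension $m^+$ and $\Delta$, expressed through the elementary symmetric functions of the roots of $\k^+_{\y^{0\prime}}$ rather than through the roots individually, extends smoothly across the coincidence locus, where its value is exactly the "double root" determinant. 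It is worth stressing that we never ask for completeness modulo the genuinely discontinuous $q_{\varphi}^+(\y)$ — whose degree jumps each time some $\pi_{j,+}$ crosses the real axis — but only modulo the fixed $\k^+_{\y^{0\prime}}$, and this is what makes the stability statement available; the degenerate configurations in which Case~4 occurs force $\sigma^0=0$ and $\tau^0\partial_s\varphi=0$, and there the condition for $(Q,B_1,B_2,\varphi)$ coincides with the one for $(P,B_1,B_2)$, whose stability was already obtained in the spirit of Lemma~\ref{lemma: perturbation LS}.
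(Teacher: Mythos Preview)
The paper does not prove this proposition; it simply states ``The following proposition can be found in \cite[proposition 1.8]{ML}'' and refers the reader there. So there is no in-paper proof to compare against.

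Your argument is correct and is essentially the standard route taken in the cited reference. For part (1) you give both the structural reason (each factor $\ell_j$ contributes at most one root with $\Im\ge 0$ because $\Im\pi_{j,-}<0$) and the intrinsic reason (two boundary symbols cannot span a quotient of dimension $>2$); either suffices. For part (2) your reduction to a single scalar criterion $\Delta(\y')\neq 0$ built from the coefficients of the Euclidean remainders of $b_{1,\varphi},b_{2,\varphi}$ by the monic polynomial $\k^+_{\y^{0\prime}}$ is exactly the clean way to handle the root-coincidence issue you flag: since the coefficients of $\k^+_{\y^{0\prime}}$ are elementary symmetric functions of a \emph{fixed} group of roots of the smooth polynomial $q_\varphi$, they are smooth on $\U_0$, so the remainder map and hence $\Delta$ are smooth there, and the rank condition is open. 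The homogeneity step then promotes the open \nhd to a conic one. One minor wording point: the \nhd is \emph{enlarged} (not ``shrunk'') to a conic one by homogeneity, and the conicity is in $(\sigma,\xi',\tau)$ with $(s,x,\gamma,\eps)$ in an ordinary \nhd; this is consistent with how $\U$ is used downstream in the paper.
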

We emphasize again that the second property in Proposition~\ref{prop:
  stability LS algegraic conditions} looks very much like the
statement of \LS condition for $(Q, B_1, B_2, \varphi)$ at $\y'$
in Definition~\ref{def: LS after conjugation}. Yet, it differs by
having $q_{\varphi}^+(x,\xi',\xi_d,\tau)$ that only depends on the root configuration at $\y'$ replaced
 by $ \k^+_{\y^{0\prime}}(\y',\xi_d)$ whose structure is based on the
 root configuration at $\y^{0\prime}$. 
Actually $q_{\varphi}^+$ and $ \k^+_{\y^{0\prime}}$ are only different if at $\y^{0\prime}$ one root is real.
 The polynomial $ \k^+_{\y^{0\prime}}$ is smooth with respect $\y'$ in a \nhd of $\y^{0\prime}$ but the degree of $q_{\varphi}^+$ can be changed in a \nhd of $\y^{0\prime}$.

\medskip
  Let $m^+$ be the common degree of $q_{\varphi}^+(\y^{0\prime},\xi_d)$ and
$\k^+_{\y^{0\prime}} (\y',\xi_d)$ and 
$m^-$ be the common degree of $q_{\varphi}^-(\y^{0\prime},\xi_d)$ and
$\k^-_{\y^{0\prime}} (\y',\xi_d)$ for $\y'\in \U$. One has $m^+ + m^-
= 4$ and thus $m^- \geq 2$ for $\y'\in \U$ since $m^+ \leq 2$. 

Invoking the Euclidean division of polynomials, one sees that it is
sufficient to consider polynomials $f$ of degree less than or equal to
$m^+-1\leq 1$  in \eqref{eq: extended LS condition}.
Since the degree of $b_{j,\varphi} (\y',\xi_d)$ can be as high as
$3> m^+-1$ it however makes sense to consider $f$ of degree less than or equal to
$m=3$. Then, the second property in Proposition~\ref{prop:
  stability LS algegraic conditions} is equivalent to having 
\begin{align*}
  \{ b_{1,\varphi}(s,x,\xi',\xi_d,\tau),
  b_{2,\varphi}(s,x,\xi',\xi_d,\tau)\} \cup \bigcup_{\ell=0
  }^{3- m^+} \{
  \k^+_{\y^{0\prime}}(\y',\xi_d) \xi_d^\ell\} 
\end{align*}
be a complete in the set of polynomials of degree less than or equal to
$m=3$. Note that this family is made of $m' = 6- m^+ = 2 + m^-$
polynomials. 

\medskip
We now express an inequality that follows from Proposition~\ref{prop:
  stability LS algegraic conditions} that will be key in the boundary
estimation given in Proposition~\ref{prop: local boundary
  estimate} below.

\subsection{Symbol positivity at the boundary}
\label{sec: Symbol postivity at the boundary}
The symbols $b_{j,\varphi}$, $j=1,2$, are polynomial in $\xi_d$ of
degree $k_j \leq 3$ and we
may thus write them in the form
\begin{align*}
b_{j,\varphi}(\y',\xi_d)=\sum\limits_{\ell=0}^{k_j}b_{j,\varphi}^\ell (\y')\xi_d^\ell,
\end{align*}
with $b_{j,\varphi}^\ell$ homogeneous of degree $k_j-\ell$.

The polynomial $\xi_d \mapsto \k^+_{\y^{0\prime}} (\y',\xi_d)$ is of
degree $m^+ \leq 2$ for $\y'\in \U$ with $\U$ given by Proposition~\ref{prop: stability LS
  algegraic conditions}.
Similarly, we write 
\begin{align*}
\k^+_{\y^{0\prime}} (\y',\xi_d)=\sum\limits_{\ell=0}^{m^+}\k^{+,\ell}_{\y^{0\prime}} (\y')\xi_d^\ell,
\end{align*}
with $\k^{+,\ell}_{\y^{0\prime}}$  homogeneous of degree $m^+-\ell$.
We introduce 
\begin{align*}
  e_{j, \y^{0\prime}} (\y', \xi_d) = 
\begin{cases}
  b_{j,\varphi}(\y',\xi_d) & \text{if} \ j=1,2,\\
  \k^+_{\y^{0\prime}} (\y',\xi_d) \xi_d^{j-3} & \text{if} \ j=3,\dots, m'.
\end{cases}
\end{align*}
As explained above, all these polynomials are of degree less than or
equal to three. 
If we now write 
\begin{align*}
  e_{j, \y^{0\prime}} (\y', \xi_d) 
  = \sum_{\ell=0}^{3} e_{j,
  \y^{0\prime}}^\ell (\y') \xi_d^\ell,
\end{align*}
for $j=1,2$ one has $e_{j, \y^{0\prime}}^\ell (\y') =
b_{j,\varphi}^\ell (\y')$, with $\ell =0, \dots, k_j$ and  $e_{j,
  \y^{0\prime}}^\ell (\y') =0$ for $\ell > k_j$, and for
$j=3,\dots,m'$, 
\begin{align*}
   e_{j, \y^{0\prime}}^\ell (\y') = 
  \begin{cases}
   0 & \text{if} \ \ell <  j-3,\\
   \k^{+,\ell+3-j}_{\y^{0\prime}} (\y') & \text{if} \ \ell=j-3,\dots,
   m^+ + j- 3\leq m^+ + m' -3,\\
   0 & \text{if} \ \ell > m^+ + j- 3.
  \end{cases}
\end{align*}
In particular $e_{j, \y^{0\prime}}^\ell (\y')$ is homogeneous of
degree $m^+ +j - \ell -3$. Note that $m^+ + m' -3 = 3$.
We thus have the following result. 
\begin{lemma}\label{lemma: algebraic condition LS}
  Set the $m' \times (m+1) $ matrix $M (\y') = (M_{j, \ell} (\y')
  )_{\substack{1\leq j \leq m'\\0\leq \ell \leq m}}$ with $M_{j, \ell} (\y')  = e_{j, \y^{0\prime}}^\ell (\y')$. Then, the second point
  in Proposition~\ref{prop: stability LS algegraic conditions} states
  that $M (\y') $ is of rank $m+1=4$ for $\y ' \in \U$. 
\end{lemma}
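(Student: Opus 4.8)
The plan is to recast the completeness statement of Proposition~\ref{prop: stability LS algegraic conditions} as a surjectivity (equivalently, full-rank) statement for an explicit finite-dimensional linear map. First I would recall the reduction performed just above the lemma: since $\k^+_{\y^{0\prime}}(\y',\cdot)$ is monic of degree $m^+\leq 2$, Euclidean division shows that the completeness of $\{b_{1,\varphi}(\y',\cdot),b_{2,\varphi}(\y',\cdot)\}$ modulo $\k^+_{\y^{0\prime}}(\y',\cdot)$ at $\y'$ — that is, the identity \eqref{eq: extended LS condition} for all polynomial $f$ — is equivalent to the statement that the family
$$
\{ b_{1,\varphi}(\y',\cdot),\, b_{2,\varphi}(\y',\cdot)\}
\cup\bigcup_{\ell=0}^{3-m^+}\{\k^+_{\y^{0\prime}}(\y',\cdot)\,\xi_d^\ell\}
=\{e_{j,\y^{0\prime}}(\y',\cdot)\}_{1\leq j\leq m'}
$$
spans the space $E$ of polynomials in $\xi_d$ of degree at most $m=3$. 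Here $m'=2+(3-m^+)+1=6-m^+\geq 4$ since $m^+\leq 2$, so the list is nonempty and has length at least $m+1$.

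Next I would fix the monomial basis $(1,\xi_d,\xi_d^2,\xi_d^3)$ of $E$ and identify $\sum_{\ell=0}^3 a_\ell\xi_d^\ell\in E$ with its coordinate vector $(a_0,a_1,a_2,a_3)\in\C^{m+1}$. By the very definition of the coefficients $e^\ell_{j,\y^{0\prime}}(\y')$ recorded before the lemma — including the prescribed vanishing ranges $e^\ell_{j,\y^{0\prime}}=0$ for $\ell>k_j$ when $j=1,2$ and for $\ell\notin[j-3,\,m^++j-3]$ when $j\geq 3$ — the $j$-th row of $M(\y')$ is exactly the coordinate vector of $e_{j,\y^{0\prime}}(\y',\cdot)$. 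Hence, under this identification, the linear span of $\{e_{j,\y^{0\prime}}(\y',\cdot)\}_{1\leq j\leq m'}$ inside $E$ corresponds to the row space of $M(\y')$ in $\C^{m+1}$.

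I would then conclude by pure linear algebra: the family $\{e_{j,\y^{0\prime}}(\y',\cdot)\}_j$ spans $E$ if and only if the row space of $M(\y')$ equals $\C^{m+1}=\C^4$, i.e.\ if and only if $\rank M(\y')=m+1=4$. Combining this with the equivalence recalled in the first step and with the second assertion of Proposition~\ref{prop: stability LS algegraic conditions}, which guarantees that \eqref{eq: extended LS condition} holds at every $\y'\in\U$, yields $\rank M(\y')=4$ for all $\y'\in\U$, as claimed. The computation is entirely routine; the only genuine point is the first step, namely that passing from arbitrary $f$ in \eqref{eq: extended LS condition} to $f\in E$ of degree $\leq 3$ and adjoining the polynomials $\k^+_{\y^{0\prime}}(\y',\cdot)\xi_d^\ell$ for $0\leq\ell\leq 3-m^+$ faithfully encodes completeness modulo $\k^+_{\y^{0\prime}}$ — and this is immediate from the monicity of $\k^+_{\y^{0\prime}}(\y',\cdot)$ via the division algorithm.
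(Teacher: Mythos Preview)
Your proof is correct and follows essentially the same approach as the paper: both reduce the completeness condition of Proposition~\ref{prop: stability LS algegraic conditions} (via Euclidean division by the monic $\k^+_{\y^{0\prime}}$) to the statement that the family $\{e_{j,\y^{0\prime}}(\y',\cdot)\}_{1\leq j\leq m'}$ spans the polynomials of degree at most $3$, and then identify this with the row space of $M(\y')$ having full rank $4$. The paper carries out the reduction in the paragraph preceding the lemma and, after the statement, merely displays $M(\y')$ explicitly in the three cases $m^+=0,1,2$ and asserts the existence of a nonsingular $4\times4$ submatrix; your write-up makes the linear-algebra equivalence more explicit but is otherwise the same argument.
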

Indeed,
we have 
\begin{equation*}
    M (\y')=
    \begin{pmatrix}
    e_{1, \y^{0\prime}}^0& e_{1, \y^{0\prime}}^1&e_{1, \y^{0\prime}}^2&e_{1, \y^{0\prime}}^3\\
    e_{2, \y^{0\prime}}^0& e_{2, \y^{0\prime}}^1&e_{2, \y^{0\prime}}^2&e_{2, \y^{0\prime}}^3\\
    e_{3, \y^{0\prime}}^0& e_{3, \y^{0\prime}}^1&e_{3, \y^{0\prime}}^2&e_{3, \y^{0\prime}}^3\\
    e_{4, \y^{0\prime}}^0& e_{4, \y^{0\prime}}^1&e_{4, \y^{0\prime}}^2&e_{4, \y^{0\prime}}^3\\
    e_{5, \y^{0\prime}}^0& e_{5, \y^{0\prime}}^1&e_{5, \y^{0\prime}}^2&e_{5, \y^{0\prime}}^3\\
    e_{6, \y^{0\prime}}^0& e_{6, \y^{0\prime}}^1&e_{6, \y^{0\prime}}^2&e_{6, \y^{0\prime}}^3
    \end{pmatrix}(\y').
\end{equation*}
\begin{enumerate}
    \item For $m^+=0$, we have $m'=6-m^+=6$ and 
    \begin{equation*}
    M (\y')=
        \begin{pmatrix}
            b^0_{1,\varphi}& b^1_{1,\varphi}& b^2_{1,\varphi}&b^3_{1,\varphi}\\
            
            b^0_{2,\varphi}& b^1_{2,\varphi}& b^2_{2,\varphi}&b^3_{2,\varphi}\\
            \k^{+,0}_{\y'^0}&0&0&0\\
            0&\k^{+,0}_{\y'^0}&0&0\\
            0&0&\k^{+,0}_{\y'^0}&0\\
            0&0&0&\k^{+,0}_{\y'^0}
        \end{pmatrix}(\y').
    \end{equation*}
 \item For $m^+=1$, we have $m'=6-m^+=5$ and 
    \begin{equation*}
    M (\y')=
        \begin{pmatrix}
            b^0_{1,\varphi}& b^1_{1,\varphi}& b^2_{1,\varphi}&b^3_{1,\varphi}\\
            
            b^0_{2,\varphi}& b^1_{2,\varphi}& b^2_{2,\varphi}&b^3_{2,\varphi}\\
            \k^{+,0}_{\y'^0}&\k^{+,1}_{\y'^0}&0&0\\
            0&\k^{+,0}_{\y'^0}&\k^{+,1}_{\y'^0}&0\\
            0&0&\k^{+,0}_{\y'^0}&\k^{+,1}_{\y'^0}
        \end{pmatrix}(\y').
    \end{equation*}
    \item For $m^+=2$, we have $m'=6-m^+=4$ and 
    \begin{equation*}
    M (\y')=
        \begin{pmatrix}
            b^0_{1,\varphi}& b^1_{1,\varphi}& b^2_{1,\varphi}&b^3_{1,\varphi}\\
            
            b^0_{2,\varphi}& b^1_{2,\varphi}& b^2_{2,\varphi}&b^3_{2,\varphi}\\
            \k^{+,0}_{\y'^0}&\k^{+,1}_{\y'^0}&\k^{+,2}_{\y'^0}&0\\
            0&\k^{+,0}_{\y'^0}&\k^{+,1}_{\y'^0}&\k^{+,2}_{\y'^0}
        \end{pmatrix}(\y').
    \end{equation*}
\end{enumerate}
There exists a $4\times 4$ sub-matrix such that $\det  M (\y')\neq 0.$ Since the rank of $M (\y')$ is the same as the maximum of the ranks of the largest square sub-matrices of $M (\y')$, then $\rank M (\y')=4.$\\
\\
Recall that  $m' = m^- + 2 \geq 4$.

\medskip
We now set 
\begin{align}
  \label{eq: boundary symbol}
  \un{e_{j, \y^{0\prime}}} (\y', \z) = \sum_{\ell=0}^{3} e_{j,
  \y^{0\prime}}^\ell (\y') z_\ell  = \sum_{\ell=0}^{3}  M_{j, \ell}
  (\y') z_\ell , \qquad \z = (z_0, \dots, z_3)
  \text{ and }j=1,\dots, m' .
\end{align}
in agreement with the notation introduced in \eqref{eq: symbol z} in Section~\ref{sec: Boundary  differential quadratic forms}.
One has the following positivity result. 
\begin{lemma}
  \label{lemma: postivity boundary form}
  Let the \LS condition hold at a point $\y^{0\prime}  = (s^0,x^0,\sigma^0,
\xi^{0\prime}, \tau^0,\gamma^0,\eps^0)\in\R\times \partial
 \Omega \times\R\times \R^{d-1}\times [0,+\infty) \times [1,+\infty)\times (0,1]$ for
 $(Q, B_1, B_2, \varphi)$ and let $\U$ be as given by
 Proposition~\ref{prop: stability LS algegraic conditions}.
 Then, if $\y' \in \U$ there exists $C>0$ such that 
 \begin{align*}
  \sum_{j=1}^{m'} \big|\un{e_{j, \y^{0\prime}}} (\y', \z)\big|^2 \geq
   C |\z|_{\C^{4}}^2, 
   \qquad \z = (z_0, \dots, z_3) \in \C^{4}.
\end{align*}
\end{lemma}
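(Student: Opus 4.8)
The plan is to read the left‑hand side as the squared Euclidean norm of $M(\y')\z$, where $M(\y')$ is the $m'\times(m+1)$ matrix introduced in Lemma~\ref{lemma: algebraic condition LS}. Indeed, by \eqref{eq: boundary symbol} one has $\un{e_{j,\y^{0\prime}}}(\y',\z)=\sum_{\ell=0}^{3}M_{j,\ell}(\y')\,z_\ell=\bigl(M(\y')\z\bigr)_j$ for $\z=(z_0,\dots,z_3)\in\C^4$, so that
\[
\sum_{j=1}^{m'}\bigl|\un{e_{j,\y^{0\prime}}}(\y',\z)\bigr|^2
=\bigl|M(\y')\z\bigr|_{\C^{m'}}^2
=\z^{*}\,G(\y')\,\z,
\qquad G(\y'):=M(\y')^{*}M(\y').
\]
Thus $G(\y')$ is a Hermitian nonnegative $4\times4$ matrix, and the asserted inequality amounts to a uniform (in $\y'\in\U$) positive lower bound for the smallest eigenvalue of $G(\y')$.

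The algebraic core is immediate: by Lemma~\ref{lemma: algebraic condition LS}, the \LS condition for $(Q,B_1,B_2,\varphi)$ at $\y^{0\prime}$ forces $\rank M(\y')=m+1=4$ for every $\y'\in\U$, with $\U$ the conic \nhd given by Proposition~\ref{prop: stability LS algegraic conditions}. Since $M(\y')$ has exactly four columns, full rank means its columns are linearly independent, i.e.\ $\ker M(\y')=\{0\}$, i.e.\ $\z\mapsto M(\y')\z$ is injective. Hence $\z^{*}G(\y')\z=|M(\y')\z|^2>0$ for $\z\neq0$, so $G(\y')$ is positive definite and $c(\y'):=\lambda_{\min}\bigl(G(\y')\bigr)>0$, which already gives the bound with $C=c(\y')$ at each fixed $\y'$.

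It then remains to make $C$ uniform on $\U$. I would first observe that the entries of $M(\y')$ depend continuously — in fact smoothly — on $\y'\in\U$: the $b^\ell_{j,\varphi}(\y')$ are polynomial in $(\xi',\tau)$ with coefficients smooth in $(s,x,\gamma,\eps)$, and the coefficients $\k^{+,\ell}_{\y^{0\prime}}(\y')$ of $\k^+_{\y^{0\prime}}(\y',\xi_d)$ are smooth on $\U$, because the roots $\pi_j(\y')$ indexed by $\mathscr{E}^+$ are smooth there (this is exactly the \nhd on which $\k^\pm_{\y^{0\prime}}$ were defined in the setup of Proposition~\ref{prop: stability LS algegraic conditions}). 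Therefore $\y'\mapsto c(\y')$ is continuous and everywhere positive on $\U$. To turn this into a positive lower bound on the conic set $\U$, I would exploit the homogeneity recorded before Lemma~\ref{lemma: algebraic condition LS}: $e^\ell_{j,\y^{0\prime}}(\y')$ is homogeneous of degree $m^{+}+j-\ell-3$ in the frequency variables, so factoring $M(\y')=D_1(\y')\,\widetilde M(\y')\,D_2$ with $D_1,D_2$ invertible diagonal matrices whose entries are powers of $\lsct$ and $\widetilde M(\y')$ of degree zero, the matrix $\widetilde M(\y')$ still has rank $4$ on $\U$ and its least singular value is continuous and positive; by homogeneity it suffices to bound it below on the relatively compact slice $\U\cap\{\lsct=1\}$ — shrinking $\U$ if necessary so that the $(s,x,\gamma,\eps)$‑components of $\y'$ stay in a bounded set — and carrying $D_1,D_2$ back through $|M(\y')\z|^2=|D_1\widetilde M(\y')D_2\z|^2$ yields the estimate with a constant $C>0$ depending only on $\y^{0\prime}$ and $\U$. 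This is precisely the form needed to feed the boundary quadratic form built from the $e_{j,\y^{0\prime}}$ into the G{\aa}rding inequality used in Proposition~\ref{prop: local boundary estimate}.

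I expect the uniformity step to be the only delicate point, because the rows and columns of $M(\y')$ carry different homogeneity degrees, so one must choose the normalizing powers of $\lsct$ (equivalently, fix the orders of the symbols $e_{j,\y^{0\prime}}$) consistently before invoking compactness. The heart of the lemma, namely positive‑definiteness of $G(\y')$, follows with no computation at all once Lemma~\ref{lemma: algebraic condition LS} is available.
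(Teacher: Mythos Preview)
Your core argument is exactly the paper's: write the left-hand side as $|M(\y')\z|_{\C^{m'}}^2=\z^*\,\transp\ovl{M(\y')}M(\y')\,\z$ via \eqref{eq: boundary symbol}, and invoke Lemma~\ref{lemma: algebraic condition LS} to get $\rank M(\y')=4$, hence positive definiteness and the existence of $C>0$. The paper stops there, since the lemma as stated asserts only that for each fixed $\y'\in\U$ such a $C$ exists; the uniformity over a compact slice is carried out later, in the proof of Proposition~\ref{prop: microlocal boundary estimate}, by restricting to $\mathbb S_{\ovl{\U_1}}$. Your homogeneity-and-compactness discussion is therefore not needed for this lemma, though it correctly anticipates what comes next.
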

\begin{proof}
In $\C^4$ define  the bilinear form 
\begin{align*}
  \Sigma_{\mathscr B} (\z, \z') 
  = \sum_{j=1}^{m'} \un{e_{j,\y^{0\prime}}} (\y', \z)  
  \ovl{\un{e_{j,\y^{0\prime}}} (\y', \z')}.
\end{align*}
With \eqref{eq: boundary symbol} one has
\begin{align*}
  \Sigma_{\mathscr B} (\z, \z') 
  = \biginp{M (\y') \z}{M (\y') \z'}_{\C^{m'}}
  = \biginp{\transp \ovl{M (\y')}  M (\y') \z}{\z'}_{\C^{4}}.
\end{align*}
As $\rank \transp \ovl{M (\y')}  M (\y') = \rank  M (\y') =4$ by
Lemma~\ref{lemma: algebraic condition LS} one obtains
the result. 
\end{proof}

\section{Estimate for the boundary norm under
    \LS condition}\label{sec: boundary norm under LS}
Near $x^0 \in \partial \Omega$ we consider two boundary operators $B_1$
and $B_2$. As in Section~\ref{sec: LS condition for conjugated
  bilaplace} the associated conjugated operators are denoted by 
$B_{j,\varphi}$, $j=1,2$ with respective principal symbols $b_{j,\varphi}(s,x,\xi,\tau)$.

The main result of this section is the following proposition for the
fourth-order conjugated operator $Q_{\varphi}$. It is key in the final
result of the present article. It states that all traces are
controlled by norms of $B_{1,\varphi} v\br$ and $B_{2,\varphi} v\br$ if the
\LS condition holds for $(Q,B_1, B_2,\varphi)$.

\begin{proposition}
  \label{prop: local boundary estimate}
Let $Q=\Delta^2+D_s^4$ and $(s^0,x^0) \in (0,S_0)\times\partial \Omega$, with
  $\Omega$ locally given by $\{ x_d>0\}$ and $S_0>0$.
  Assume that  $(Q, B_1, P_2,\varphi)$ satisfies the
  \LS condition of Definition~\ref{def: LS after conjugation} at  $\y' =(s, x,\xi',\sigma,\tau,\gamma,\eps)$
  for all $(\sigma,\xi',\tau,\gamma,\eps) \in \R\times\R^{d-1} \times (0,+\infty) \times
  [1,+\infty)\times (0,1]$ such that $\tau >0$.

  Then, there exist $W^0$ a \nhd of $(s^0,x^0)$, $C>0$, $\tau_0>0$ such that
  \begin{align*}
    \normsc{ \trace(v)}{3,1/2} \leq C 
    \big(
    \Norm{Q_{\varphi}v}{+}
    +\sum\limits_{j=1}^{2}
    \normsc{B_{j, \varphi}v\br}{7/2-k_{j}}
    + \Normsc{v}{4,-1}
    \big),
  \end{align*}
  for $\tau\geq \tau_0, \gamma\geq 1$  and $v\in \Cbarc(W^0_+)$.
\end{proposition}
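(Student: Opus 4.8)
\emph{Proof plan.} The plan is to reduce the estimate, by a tangential microlocalization, to a family of one-dimensional problems in the normal variable $x_d$, in each of which the factorization $q_\varphi = \k^+_{\y^{0\prime}}\k^-_{\y^{0\prime}}$ from Section~\ref{sec: Stability LS-conj} separates a ``stable'' factor $\k^-_{\y^{0\prime}}$ (all roots in the open lower half-plane) from the ``unstable'' factor $\k^+_{\y^{0\prime}}$ (degree $m^+\le 2$). For the stable factor one has a perfect elliptic estimate (Lemma~\ref{el1}); the traces it leaves uncontrolled are precisely those recovered from the \LS completeness of $\{b_{1,\varphi},b_{2,\varphi}\}$ modulo $\k^+_{\y^{0\prime}}$ — encoded in the rank-$4$ matrix $M(\y')$ of Lemma~\ref{lemma: algebraic condition LS} and in the coercivity of Lemma~\ref{lemma: postivity boundary form} — by means of a boundary G{\aa}rding inequality (Proposition~\ref{prop: boundary form -Gaarding tangentiel}).

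First I would cover the relevant (co)directions: for each admissible $\y^{0\prime}=(s^0,x^0,\sigma^0,\xi^{0\prime},\tau^0,\gamma^0,\eps^0)$, Proposition~\ref{prop: stability LS algegraic conditions} provides a conic neighborhood $\U=\U_{\y^{0\prime}}$ on which $q_\varphi=\k^+_{\y^{0\prime}}\k^-_{\y^{0\prime}}$ smoothly, with $\k^-_{\y^{0\prime}}$ monic of degree $m^-=4-m^+\ge 2$ whose roots all have negative imaginary part, and on which $M(\y')$ has rank $4$. After shrinking $W^0$ about $(s^0,x^0)$ and extracting a finite subcover, take a tangential partition of unity $\sum_\nu\chi_\nu^2=1$ with $\chi_\nu\in S(1,g_{\mathsf T})$ homogeneous of degree $0$ and $\supp(\chi_\nu)\subset\U_\nu$. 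Since, modulo terms absorbed into $\Normsc{v}{4,-1}$,
\[
\normsc{\trace(v)}{3,1/2}^2\lesssim\sum_\nu\normsc{\trace(\Opt(\chi_\nu)v)}{3,1/2}^2+\normsc{\trace(v)}{3,-M}^2 ,
\]
it suffices to bound each $\normsc{\trace(\Opt(\chi_\nu)v)}{3,1/2}$. Fix $\nu$, write $\chi=\chi_\nu$, $\U=\U_\nu$, $w=\Opt(\chi)v$, $\k^\pm=\k^\pm_{\y^{0\prime}}$ (extended to global symbols coinciding with the true factors on a conic neighborhood of $\supp(\chi)$), and set $\tilde w=\Op(\k^+)w$.

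The argument then rests on two estimates. \emph{(i) Elliptic estimate for the stable factor.} Since $\Op(\k^-)\Op(\k^+)=Q_\varphi+R$ with $R$ of lower order by the composition rules of the calculus of Section~\ref{SB}, one has $\Op(\k^-)\tilde w=Q_\varphi w+Rw$, and Lemma~\ref{el1} applied to the monic operator $\Op(\k^-)$ on $\supp(\chi)$ gives
\[
\Normsc{\tilde w}{m^-,0}+\normsc{\trace(\tilde w)}{m^--1,1/2}\lesssim\Norm{Q_\varphi w}{+}+\Normsc{v}{4,-M};
\]
commuting $\Opt(\chi)$ through $Q_\varphi$, the right-hand side is $\lesssim\Norm{Q_\varphi v}{+}+\Normsc{v}{4,-1}$, so all traces $D_{x_d}^i\tilde w\br$ with $0\le i\le m^--1$ are controlled. \emph{(ii) Boundary coercivity from \LS.} Following Section~\ref{sec: Symbol postivity at the boundary} I would form, with $E_j=B_{j,\varphi}$ for $j=1,2$ and $E_j=D_{x_d}^{j-3}\Op(\k^+)$ for $j=3,\dots,m'$, the boundary differential quadratic form $\Bquad(u)=\sum_{j=1}^{m'}\inp{\Lsct^{\sigma_j}E_ju\br}{\Lsct^{\sigma_j}E_ju\br}_\partial$, the exponents $\sigma_j$ chosen so that $\Bquad$ is of type $(3,1/2)$; by Lemma~\ref{lemma: postivity boundary form} together with the homogeneity degrees recorded in Section~\ref{sec: Symbol postivity at the boundary}, the bilinear symbol of $\Bquad$ is positive definite on $\U$. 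Proposition~\ref{prop: boundary form -Gaarding tangentiel} then yields
\[
\normsc{\trace(\Opt(\chi)v)}{3,1/2}^2\lesssim\Re\Bquad(\Opt(\tilde\chi)v)+\normsc{\trace(v)}{3,-M}^2
\]
for a cutoff $\tilde\chi$ equal to $1$ near $\supp(\chi)$ with support in $\U$; and $\Re\Bquad(\Opt(\tilde\chi)v)$ is bounded above by $\sum_{j=1}^2\normsc{B_{j,\varphi}v\br}{7/2-k_j}^2$ (the contributions of $j=1,2$, after commuting $\Opt(\tilde\chi)$ through $B_{j,\varphi}$) plus $\normsc{\trace(\tilde w)}{m^--1,1/2}^2$ (the contributions of $j=3,\dots,m'$, since there $j-3\le m^--1$ and $E_ju\br=D_{x_d}^{j-3}\Op(\k^+)u\br$), the latter being controlled by step (i).

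Putting (i) and (ii) together, summing over $\nu$, and absorbing the lower-order traces $\normsc{\trace(v)}{3,-M}$ (via $\normsc{\trace(v)}{3,-M}\lesssim\Normsc{v}{4,-1}$ for $M$ large) and all pseudodifferential remainders into $\Normsc{v}{4,-1}$, one obtains the claimed inequality for $\tau\ge\tau_0$ with $\tau_0$ large enough. I expect the main obstacle to be the weight bookkeeping in step (ii): choosing the $\sigma_j$ so that $\Bquad$ is genuinely of type $(3,1/2)$ while making the contributions of $j=1,2$ come out as exactly $\normsc{B_{j,\varphi}v\br}{7/2-k_j}$ and those of $j\ge 3$ come out as the traces of $\tilde w$ with precisely the weights produced by Lemma~\ref{el1} — this is where the homogeneity of $b_{j,\varphi}^\ell$ and $\k^{+,\ell}_{\y^{0\prime}}$ enters — and keeping every composition and commutator error at order $\lsc^{3}$, so that it is indeed absorbed by $\Normsc{v}{4,-1}$.
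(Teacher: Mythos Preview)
Your proposal is correct and follows essentially the same approach as the paper: microlocalize by a tangential partition of unity into conic regions furnished by Proposition~\ref{prop: stability LS algegraic conditions}, use Lemma~\ref{el1} on the stable factor $\k^-_{\y^{0\prime}}$ to control the traces $D_{x_d}^{j-3}\Op(\k^+)u\br$, and combine with the boundary G{\aa}rding inequality (Proposition~\ref{prop: boundary form -Gaarding tangentiel}) driven by the rank/positivity of Lemmas~\ref{lemma: algebraic condition LS}--\ref{lemma: postivity boundary form}. The paper organizes this as a separate microlocal proposition (Proposition~\ref{prop: microlocal boundary estimate}) followed by a patching step, carries the residual boundary term as $\normsc{\trace(v)}{3,-1/2}$ and absorbs it into the left-hand side for large $\tau$ rather than into $\Normsc{v}{4,-1}$, and applies Lemma~\ref{el1} to $\Opt(\chi)\Op(\tk^+_{\y^{0\prime}})v$ (cutoff outside, then commute) rather than to $\Op(\k^+)\Opt(\chi)v$; these are cosmetic differences, and your identification of the weight bookkeeping in the boundary form as the delicate point matches exactly the homogeneity computation the paper performs via the rescaling map $N_t$.
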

For the proof of Proposition~\ref{prop: local boundary estimate} we
start with a microlocal version of the result.

\subsection{A microlocal estimate}
\begin{proposition}
  \label{prop: microlocal boundary estimate}
Let $(s^0,x^0) \in (0,S_0)\times\partial \Omega$, with
  $\Omega$ locally given by $\{ x_d>0\}$ and $S_0>0$ and let $W$ be a bounded open
  \nhd of $(s^0,x^0)$ in $\R\times\R^d$. Let $(\sigma^0,\xi^{0\prime},\tau^0,\gamma^0,\eps^0)\in\R\times \R^{d-1}\times (0,+\infty)\times[1,\infty)\times(0,1]$ nonvanishing with $\tau^0 >0$
  and such that $(P, B_1, B_2,\varphi)$ satisfies the
  \LS condition of Definition~\ref{def: LS after conjugation} at $\y^{0\prime}
  = (s^0,x^0,\sigma^0, \xi^{0\prime},\tau^0,\gamma^0,\eps^0)$.
 
  Then, there exists $\U$ a
  conic neighborhood of $\y^{0\prime}$ in
  $W\times\R\times \R^{d-1}\times (0,+\infty) \times [1,+\infty)\times (0,1]$ where
  $\tau \geq\tau^0$  such that
  if  $\chi \in S(1,g_{\mathsf T})$, homogeneous of degree $0$ in $(\sigma,\xi',\tau)$ with
  $\supp (\chi)\subset \U$, there exist $C>0$ and $\tau_0>0$ such that
  \begin{multline*}
    \normsc{\trace(\Opt(\chi) v)}{3,1/2} \leq C \Big( 
    \sum\limits_{j=1}^{2}
    \normsc{B_{j, \varphi}v\br}{7/2-k_{j}}
    + \Norm{Q_{\varphi} v}{+}
    + \Normsc{v}{4,-1}
    + \normsc{\trace(v)}{3,-1/2} 
    \Big),
  \end{multline*}
  for  $\tau\geq \tau_0$  and $v\in \Cbarc(W_+)$.
\end{proposition}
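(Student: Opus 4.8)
The plan is to reduce the trace estimate to the algebraic completeness encoded in Lemma~\ref{lemma: postivity boundary form} via a G\aa rding-type argument, using the factorization $Q_\varphi = L_2 L_1$ and the root configuration of $q_\varphi$ near $\y^{0\prime}$. First I would shrink the conic neighborhood $\U$ of $\y^{0\prime}$, using Proposition~\ref{prop: stability LS algegraic conditions}, so that throughout $\U$ the polynomial $\k^+_{\y^{0\prime}}(\y',\xi_d)$ (of common degree $m^+\le 2$) keeps its factor structure, the complementary factor $\k^-_{\y^{0\prime}}(\y',\xi_d)$ has all roots with imaginary part bounded away from $0$ from below (so it is of the ``elliptic'' type to which Lemma~\ref{el1} applies after suitable normalization of leading coefficient), and the matrix $M(\y')$ from Lemma~\ref{lemma: algebraic condition LS} has rank $4$. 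On $\supp(\chi)$ set $w = \Opt(\chi)v$. The idea is that the full set of traces $\trace(w)$ up to order $3$ is reconstructed from two pieces of information: the ``boundary data'' $B_{j,\varphi}w\br$, $j=1,2$, and the ``interior ellipticity'' coming from the factor $\k^-_{\y^{0\prime}}$, which controls the remaining normal derivatives because $Q_\varphi w = \Opt(\chi) Q_\varphi v + [\,Q_\varphi, \Opt(\chi)\,] v$ and the commutator is lower order.

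The key steps, in order. (1) Write $Q_\varphi = K^-_{\y^{0\prime}} K^+_{\y^{0\prime}} + (\text{lower order})$ where $K^\pm_{\y^{0\prime}} = \Op(\k^\pm_{\y^{0\prime}})$ in the class $\Psisc^{m^\pm,0}$, valid microlocally on $\U$; the error is in $\Psisc^{3,-1}$ and is absorbed into $\Normsc{v}{4,-1}$. (2) Apply Lemma~\ref{el1} to the factor $K^-_{\y^{0\prime}}$ after dividing by its leading coefficient: since all roots of $\k^-_{\y^{0\prime}}$ have negative imaginary part on $\U$, one gets $\Normsc{\Opt(\chi) w_1}{m^-,0} + \normsc{\trace(\Opt(\chi) w_1)}{m^--1,1/2} \lesssim \Norm{K^-_{\y^{0\prime}} \Opt(\chi) w_1}{+} + \Normsc{w_1}{m^-,-M}$ with $w_1 = K^+_{\y^{0\prime}} w$; this controls $\trace(w)$ in the ``$\k^-$ directions.'' (3) For the remaining $m^++1$ traces $D^\ell_{x_d} w\br$, $\ell=0,\dots,m^+$, use that $B_{j,\varphi}w\br = \sum_\ell b^\ell_{j,\varphi}(s,x',D') D^\ell_{x_d}w\br$ plus trace terms of order $> m^+$ that were already estimated in step (2); together with the $\k^+_{\y^{0\prime}}\xi_d^\ell$ relations, Lemma~\ref{lemma: postivity boundary form} provides a left inverse to the symbol matrix $M(\y')$ on $\supp(\chi)$, and a tangential G\aa rding inequality (Proposition~\ref{prop: boundary form -Gaarding tangentiel} applied to the positive definite boundary quadratic form $\un{\Sigma_{\mathscr B}}$) converts this pointwise invertibility into the bound $\normsc{\trace(\Opt(\chi)w)}{3,1/2}^2 \lesssim \sum_j \normsc{B_{j,\varphi}w\br}{7/2-k_j}^2 + (\text{already-estimated traces}) + \normsc{\trace(v)}{3,-1/2}^2$. (4) Collect the estimates, commute $\Opt(\chi)$ past $B_{j,\varphi}$ and $Q_\varphi$ at the cost of lower-order terms absorbed into the right-hand side, and replace $w$ by $\Opt(\chi)v$ throughout.

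The main obstacle I expect is step (3): carefully keeping track of which traces are ``free'' and which are already controlled, since the degree $m^+$ of $q_\varphi^+$ can jump within $\U$ (the point of using the fixed polynomial $\k^+_{\y^{0\prime}}$ rather than $q_\varphi^+$), and one must ensure that the boundary quadratic form built from the rows of $M(\y')$ is genuinely positive definite in the sense of the definition preceding Proposition~\ref{prop: boundary form -Gaarding tangentiel}, with the correct powers of $\lsct$ matching the homogeneity degrees $m^+ + j - \ell - 3$ of the entries $e^\ell_{j,\y^{0\prime}}$. A secondary technical point is the interplay of the three parameters $\tau,\gamma,\eps$: the weight $\varphi = \varphi_{\gamma,\eps}$ enters $b_{j,\varphi}$ and $\k^\pm_{\y^{0\prime}}$ through $\tilde\tau$ and $d_z\varphi$, so the symbol classes and the constants in G\aa rding must be uniform in $\gamma\ge 1$, $\eps\in(0,1]$, which is exactly what the calculus of Section~\ref{SB} was set up to guarantee; I would invoke that uniformity rather than re-prove it.
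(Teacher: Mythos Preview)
Your proposal is correct and follows essentially the same approach as the paper's proof: the paper likewise shrinks to a conic \nhd via Proposition~\ref{prop: stability LS algegraic conditions}, builds the boundary quadratic form from the rows of $M(\y')$ (the symbols $e_{j,\y^{0\prime}}$, $j=1,\dots,m'$), applies the G{\aa}rding inequality of Proposition~\ref{prop: boundary form -Gaarding tangentiel} using the positivity from Lemma~\ref{lemma: postivity boundary form} (after a homogeneity-rescaling step to match the weights $\lsct^{2(7/2-\ell)}$), and controls the contributions $E_j u\br$ for $j\ge 3$ (i.e.\ the $\k^+_{\y^{0\prime}}\xi_d^\ell$ terms) by applying Lemma~\ref{el1} to the elliptic factor $\tk^-_{\y^{0\prime}}$ acting on $\Opt(\tk^+_{\y^{0\prime}})v$. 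The only cosmetic difference is the order of presentation: the paper first states the G{\aa}rding estimate \eqref{eq: boundary inequality - est traces} and then bounds the $E_j$ terms by the elliptic lemma, whereas you do the elliptic step first; also, the paper applies Lemma~\ref{el1} to $w=\Opt(\tk^+_{\y^{0\prime}})v$ directly (before localizing) rather than to $K^+_{\y^{0\prime}}\Opt(\chi)v$, which avoids the harmless but redundant double appearance of $\Opt(\chi)$ in your step~(2).
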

\begin{proof}
We choose a conic open \nhd $\U_0$ of $\y^{0\prime}$ according to
Proposition~\ref{prop: stability LS algegraic conditions} and such that $\U_0 \subset W\times\R\times \R^{d-1}\times (0,+\infty) \times [1,+\infty)\times (0,1]$ . Assume
moreover that $\tau \geq \tau_0$ in $\U_0$.

In Section~\ref{sec: Symbol postivity at the boundary} we introduced
the symbols $e_{j, \y^{0\prime}} (\y', \xi_d)$, $j=1, \dots, m' = m^-
+2 = 6 - m^+$. 
For a conic set $\mathscr{W}$ denote $\mathbb S_{\mathscr{W}} = \{ \y' =(s, x,\sigma,\xi',\tau,\gamma,\eps) \in \mathscr{W}; \ |(\sigma,\xi', \tilde{\tau})|=1\}$. 

Consequence of the \LS condition holding at
$\y^{0\prime}$, by
Lemma~\ref{lemma: postivity boundary form} for all $\y' \in \mathbb S_{\U_0}$ there exists $C>0$ such
that  
\begin{align*}
  \sum_{j=1}^{m'} \big|\un{e_{j, \y^{0\prime}}} (\y', \z)\big|^2 \geq
   C |\z|_{\C^{4}}^2, 
   \qquad \z = (z_0, \dots, z_3) \in \C^{4}.
\end{align*}
Let $\U_1$ be a second conic open \nhd of $\y^{0\prime}$ such that
$\ovl{\U_1} \subset \U_0$. 
Since $\mathbb S_{\ovl{\U_1}}$ is compact (recall that $W$ is bounded), there exists $C_0>0$ such that 
\begin{align*}
  \sum_{j=1}^{m'} \big|\un{e_{j, \y^{0\prime}}} (\y', \z)\big|^2 \geq
   C_0 |\z|_{\C^{4}}^2, 
   \qquad \z = (z_0, \dots, z_3) \in \C^{4}, \ \y' \in \mathbb S_{\ovl{\U_1}}.
\end{align*}
Introducing the map $N_{t}\y'=(s,x, t\sigma, t \xi', t \tilde{\tau})$, for
$\y' =(s, x,\xi',\sigma,\tau,\gamma,\eps)$ with $t =\lsct^{-1}$ 
one has 
\begin{align}
  \label{eq: symbol positivity 1}
  \sum_{j=1}^{m'} \big|\un{e_{j, \y^{0\prime}}} (N_t \y', \z)\big|^2 \geq
   C_0 |\z|_{\C^{4}}^2, 
   \qquad \z = (z_0, \dots, z_3) \in \C^{4}, \ \y' \in \U_1, 
\end{align}
since $N_t \y' \in \mathbb S_{\ovl{\U_1}}$.
Now, for $j=1,2$  one has
\begin{align*}
  \un{e_{j, \y^{0\prime}}} (\y', \z)= \sum_{\ell=0}^{k_j} e_{j,
  \y^{0\prime}}^\ell (\y') z_\ell,
\end{align*}
with $ e_{j, \y^{0\prime}}^\ell (\y')$ homogeneous of degree
$k_j-\ell$, 
and for $3\leq j \leq m'$ one has 
\begin{align*}
  \un{e_{j, \y^{0\prime}}} (\y', \z)= \sum_{\ell=0}^{3} e_{j,
  \y^{0\prime}}^\ell (\y') z_\ell,
\end{align*}
with $ e_{j, \y^{0\prime}}^\ell (\y')$ homogeneous of degree
$m^++j-\ell-3$.
We define $\z' \in \C^4$ by $z'_\ell = t^{\ell-7/2} z_\ell$, $\ell=0,
\dots, 3$. 
One has 
\begin{align*}
  \un{e_{j, \y^{0\prime}}} (N_t \y', \z')= t^{k_j-7/2} \un{e_{j,
  \y^{0\prime}}} (\y', \z), \qquad j=1,2,
\end{align*}
and 
\begin{align*}
  \un{e_{j, \y^{0\prime}}} (N_t \y', \z')= t^{m^++j-13/2} \un{e_{j,
  \y^{0\prime}}} (\y', \z), \qquad j=3,\dots,m'.
\end{align*}
Thus, from \eqref{eq: symbol positivity 1} we deduce
\begin{align}
  \label{eq: symbol positivity 2}
  \sum_{j=1}^{2} \lsct^{2(7/2-k_j)} \big|\un{e_{j, \y^{0\prime}}} (\y', \z)\big|^2 
  + \sum_{j=3}^{m'} \lsct^{2(13/2-m^+-j)} 
  \big|\un{e_{j,\y^{0\prime}}} (\y', \z)\big|^2\notag
  \\
  \geq
   C_0 \sum_{\ell=0}^{3} \lsct^{2(7/2-\ell)}| z_\ell|^2, 
\end{align}
for $\z = (z_0, \dots, z_3) \in \C^{4}$, and $\y' \in \U_1$,
since $t = \lsct^{-1}.$ 

We now choose $\U$ a conic open neighborhood of $\y^{0\prime}$, such that
$\ovl{\U}\subset\U_1$.  Let $\chi\in S(1,g_{\mathsf T})$ be as in the statement and let  $\tchi\in
S(1,g_{\mathsf T})$ be homogeneous of degree $0$, with
$\supp(\tchi)\subset\U_1$ and $\tchi\equiv 1$ in a neighborhood of
$\ovl{\U}$, and thus in a \nhd of $\supp(\chi)$.

For $j=3, \dots, m'$ one has $e_{j, \y^{0\prime}}(\y',\xi_d) =
\k^+_{\y^{0\prime}} (\y',\xi_d) \xi_d^{j-3} \in \Ssc^{m^++j-3,0}$. 
Set $E_{j} = \Op ( \tchi e_{j, \y^{0\prime}})$. The introduction
of $\tchi$ is made such that $\tchi e_{j, \y^{0\prime}}$ is defined on
the whole tangential phase-space. Observe that 
\begin{align*}
  \mathscr B (w) &= \sum\limits_{j=1}^{2} \normsc{B_{j, \varphi}w\br}{7/2-k_{j}}^2
  +\sum\limits_{j=3}^{m'} \normsc{E_{j}w\br}{13/2-m^+-j}^2\\
  &=\sum\limits_{j=1}^{2} 
    \norm{\Lsct^{7/2-k_{j}} B_{j, \varphi}w\br}{\partial}^2
    + \sum\limits_{j=3}^{m'} 
    \norm{\Lsct^{13/2-m^+-j} E_{j}w\br}{\partial}^2
\end{align*}
is a boundary quadratic form of type $(3,1/2)$ as in
Definition~\ref{def: boundary quadratic form}.
From Proposition~\ref{prop: boundary form -Gaarding tangentiel} (G{\aa}rding inequality for a boundary differential form of type $(m-1,r)$) and inequality
\eqref{eq: symbol positivity 2} we have 
\begin{align}
  \label{eq: boundary inequality - est traces}
  \normsc{\trace(u)}{3,1/2}^2 
  &\lesssim  \sum\limits_{j=1}^{2} \normsc{B_{j, \varphi}u\br}{7/2-k_{j}}^2
  +\sum\limits_{j=3}^{m'} \normsc{E_{j}u\br}{13/2-m^+-j}^2
  +  \normsc{\trace(v)}{3,-M}^2.
\end{align}
for $u = \Opt(\chi) v$ and $\tau \geq \tau_0$ chosen \suff
large.

\bigskip
In $\U_1$ one can write 
\begin{align*}
  q_{\varphi} = q_{\varphi}^+ q_{\varphi}^- 
  = \k^+_{\y^{0\prime}} \k^-_{\y^{0\prime}},
 \end{align*}
with $\k^+_{\y^{0\prime}}$ of degree $m^+$ and $\k^-_{\y^{0\prime}}$
of degree $m^-$.
In fact we set 
\begin{align*}
   \tk^+_{\y^{0\prime}} (\y') 
   = \prod_{j \in \mathscr{E}^+} \big(\xi_d -\tchi \pi_j(\y')\big), 
   \qquad 
   \tk^-_{\y^{0\prime}} (\y') 
   = \prod_{j \in \mathscr{E}^-} \big(\xi_d -\tchi \pi_j(\y')\big),
 \end{align*}
with the notation of Section~\ref{sec: Stability LS-conj},
thus making the two symbols defined on
the whole tangential phase-space.
In $\U$, one has also 
\begin{align*}
  q_{\varphi} 
  = \tk^+_{\y^{0\prime}} \tk^-_{\y^{0\prime}}.
 \end{align*}
The factor $\tk^-_{\y^{0\prime}}$ is associated with roots with
negative imaginary part. With Lemma~\ref{el1} given in
Section~\ref{SB} one has the following microlocal elliptic estimate
\begin{equation*}
  \Normsc{\Opt(\chi)w}{m^-}
  + \normsc{\trace(\Opt(\chi)w)}{m^--1, 1/2}
  \lesssim 
  \Norm{\Opt(\tk^-_{\y^{0\prime}}) \Opt(\chi)w}{+}
  + \Normsc{w}{m^-,-M},
\end{equation*}
for $w \in \ovl{\scrS}(\R^N_+)$ and $\tau \geq  \tau_0$ chosen \suff large. 
We apply this inequality to $w=\Opt(\tk^+_{\y^{0\prime}} )v$. 
Since 
\begin{align*}
  \Opt(\tk^-_{\y^{0\prime}}) \Opt(\chi)\Opt(\tk^+_{\y^{0\prime}} ) =
  \Opt(\chi) Q_{\varphi} \mod \Psisc^{4,-1},
\end{align*}
one obtains
\begin{align*}
  \normsc{\trace(\Opt(\chi) \Opt(\tk^+_{\y^{0\prime}})v)}{m^--1, 1/2}
  \lesssim 
  \Norm{Q_{\varphi} v}{+}
  + \Normsc{v}{4,-1}.
\end{align*}
With $[\Opt(\chi), \Opt(\tk^+_{\y^{0\prime}})] \in \Psisc^{m^+,-1}$ one
then has
\begin{align*}
  \normsc{\trace(\Opt(\tk^+_{\y^{0\prime}}) u)}{m^--1, 1/2}
  \lesssim 
  \Norm{Q_{\varphi} v}{+}
  + \Normsc{v}{4,-1}
  + \normsc{\trace(v)}{3,-1/2},
\end{align*}
with $u= \Opt(\chi) v$ as above, using that $m^+ + m^-=4$. 
Note that 
\begin{align*}
  &\normsc{\trace(\Opt(\tk^+_{\y^{0\prime}}) u)}{m^--1, 1/2}\\
  & \qquad \asymp \sum_{j=0}^{m^--1} 
  \normsc{D_d^j \Opt(\tk^+_{\y^{0\prime}}) u\br}{m^--j- 1/2}\\
  &\qquad \gtrsim \sum_{j=3}^{m'} 
  \normsc{E_j u\br}{5/2+ m^--j} - \normsc{\trace(v)\br}{3, - 1/2} ,
\end{align*}
using that $\xi_d^j \tk^+_{\y^{0\prime}} = \tchi e_{j+3,
  \y^{0\prime}}$ in a conic \nhd of $\supp(\chi)$ and using that 
$m^- = m'-2$.
We thus obtain 
\begin{align*}
   \sum_{j=3}^{m'}  \normsc{E_{j}u\br}{13/2-m^+-j}
  \lesssim 
  \Norm{Q_{\varphi} v}{+}
  + \Normsc{v}{4,-1}
  + \normsc{\trace(v)}{3,-1/2},
\end{align*}
since $13/2-m^+ = 5/2+ m^-$.
With \eqref{eq: boundary inequality - est traces} then one finds
\begin{align*}
  \normsc{\trace(u)}{3,1/2} 
  &\lesssim  \sum \limits_{j=1}^{2} \normsc{B_{j, \varphi}u\br}{7/2-k_{j}}^2
  + \Norm{Q_{\varphi} v}{+}
  + \Normsc{v}{4,-1} + \normsc{\trace(v)}{3,-1/2}.
\end{align*}
In addition, observing that 
\begin{align*}
  \normsc{B_{j, \varphi}u\br}{7/2-k_{j}}\lesssim
  \normsc{B_{j, \varphi}v\br}{7/2-k_{j}} + \normsc{\trace(v)}{3,-1/2},
\end{align*}
the result of Proposition~\ref{prop: microlocal boundary estimate}
follows.
\end{proof}

\subsection{Proof of Proposition~\ref{prop: local boundary estimate}}
\label{sec: proof prop: local boundary estimate}
  As mentioned above the proof relies on a patching procedure of
  microlocal estimates given by Proposition~\ref{prop: microlocal
boundary estimate}. 

  We set 
  \begin{align*}
    \Gamma^{d} = \{ (\sigma, \xi',\tau,\gamma,\eps)\in\R\times \R^{d-1}\times (0,+\infty)\times (0,1] ;\ \tau \geq \tau_0,\gamma\geq \gamma_0 \},
  \end{align*}
  and 
  \begin{align*}
    \mathbb{S}^{d} = \{(\sigma, \xi',\tau,\gamma,\eps)\in
    \Gamma^{d} ;\ |(\sigma,\xi',\tilde{\tau})|=1\}.
  \end{align*}

 Consider $(\sigma^0,\xi^{0\prime},\tau^0,\gamma^0,\eps^0) \in \mathbb{S}^{d}$.
  Since the \LS condition holds at \\
  $\y^{0\prime} = (\sigma^0,x^0, \xi^{0\prime},\tau^0,\gamma^0,\eps^0)$, we can invoke 
  Proposition~\ref{prop: microlocal boundary estimate}:
  \begin{enumerate}
  \item There exists a conic open neighborhood
  $\U_{\y^{0\prime}}$ of $\y^{0\prime}$ in
  $W \times \R^{d-1}\times (0,+\infty) \times [1,+\infty)\times (0,1]$ where
  $\tau \geq \tau_0, \gamma\geq \gamma_0$;
  \item For any $\chi_{\y^{0\prime}} \in S(1,g_{\mathsf T})$ homogeneous of
  degree $0$ supported in $\U_{\y^{0\prime}}$ the estimate of
  Proposition~\ref{prop: microlocal boundary estimate} applies to $\Opt(\chi_{\y^{0\prime}} ) v$ for $\tau \geq \tau_0, \gamma\geq \gamma_0$. 
  \end{enumerate}
  Without any loss of generality we may choose $\U_{\y^{0\prime}}$ of the form $\U_{\y^{0\prime}} = \scrO_{\y^{0\prime}} \times
  \Gamma_{\y^{0\prime}}$, with\\ $\scrO_{\y^{0\prime}} \subset W$ an open \nhd of
  $(s^0,x^0)$ and $\Gamma_{\y^{0\prime}}$ a conic open \nhd of
  $(\sigma^0,\xi^{0\prime},\tau^0,\gamma^0,\eps^0)$ in $\R\times\R^{d-1} \times
  (0,+\infty)\times[1,+\infty)\times (0,1]$ where $\tau \geq \tau^0, \gamma\geq \gamma_0$. 

  Since $\{ (s^0,x^0)\} \times \mathbb{S}^{d}$ is compact we can
  extract a finite covering of it by open sets of the form of
  $\U_{\y^{0\prime}}$. We denote by
  $\tilde{\U}_i$, $i \in I$ with $|I| < \infty$, such a finite covering. This is
  also a finite covering of 
  $\{(s^0,x^0)\} \times \Gamma^{d}$.

  Each $\tilde{\U}_i$ has the form $\tilde{\U}_i =  \scrO_i \times
  \Gamma_i$, with $\scrO_i$ an open \nhd of
  $(s^0,x^0)$ and $\Gamma_i$ a conic open set in $\R^{d-1}\times [0,+\infty) \times
  [0,+\infty)$ where $\tau \geq \tau^0$. We set $\scrO = \cap_{i \in I} \scrO_i$ and 
  $\U_i = \scrO \times \Gamma_i$, $i \in I$.

  Let $W^0$ be an open \nhd of $(s^0,x^0)$ such that $W^0 \Subset
  \scrO$. The open sets $\U_i$
  give also an open covering of $\ovl{W^0} \times \mathbb{S}^{d}$ and  $\ovl{W^0} \times \Gamma^{d}$.
  With this second covering we associate a partition of unity
  $\chi_i$, $i \in I$, of $\ovl{W^0}  \times \mathbb{S}^{d}$,
where each $\chi_i$ is chosen smooth and homogeneous
  of degree one for $|(\sigma, \xi', \tilde{\tau}, )| \geq 1$, that is:
  \begin{align*}
    \sum\limits_{i \in I}\chi_i(\y')=1 
  \end{align*}
   for $\y' = (s, x,\sigma, \xi', \tau, \gamma,\eps)$ in a \nhd
    of $\ovl{W^0} \times \Gamma^{d}$,
    and $|(\sigma,\xi', \tilde{\tau})| \geq 1$.

  Let $u \in \Cbarc(W^0_+) $.
 Since each $\chi_i$ is in $S(1,g_{\mathsf T})$ and 
  supported in $\U_i$,  Proposition~\ref{prop: microlocal boundary
    estimate} applies:
  \begin{multline}
     \label{eq: local boundary estimate-1}
    \normsc{\trace(\Opt(\chi_i) v)}{3,1/2} \leq C_i
    \Big( 
    \sum\limits_{j=1}^{2}
    \normsc{B_{j, \varphi}v\br}{7/2-k_{j}}
    + \Norm{Q_{\varphi} v}{+}
    + \Normsc{v}{4,-1}
    + \normsc{\trace(v)}{3,-1/2} 
    \Big),
    \end{multline}
 for some $C_i>0$, for $\tau\geq \max\limits_i\tau_i$ for some $\tau_i>0,\gamma\geq 1$. 

 We set $\tchi=1-\sum\limits_{i \in I}\chi_i$. One has $\tchi \in
 \cap_{M\in\N}S(\lsct^{-M},g_{\mathsf T})$ microlocally in a \nhd of $\ovl{W^0} \times
 \Gamma^{d}$.
 Thus, considering the definition of $\Gamma^{d}$,  we then have $\tchi \in
 \cap_{M\in\N}S(\lsct^{-M},g_{\mathsf T})$ locally in a \nhd of $\ovl{W^0}$.
  
 For any $M\in \N$ using that $\supp(v) \subset W^0$ one has
\begin{align*}
  \normsc{\trace (v)}{3,1/2}
  &\leq \sum\limits_{i \in I}  \normsc{\trace (\Opt (\chi_i)v)}{3,1/2}
  + \normsc{\trace (\Opt (\tchi)v)}{3,1/2}\\
  &\lesssim 
    \sum\limits_{i \in I}  \normsc{\trace (\Opt (\chi_i)v)}{3,1/2}
  + \normsc{\trace (v)}{3,-M}\\
  &\lesssim 
    \sum\limits_{i \in I}  \normsc{\trace (\Opt (\chi_i)v)}{3,1/2}
  + \Normsc{v}{4,-M}.
\end{align*}
Summing estimates \eqref{eq: local boundary estimate-1} together for $i
\in I$ we thus obtain
\begin{align*}
    \normsc{\trace(v)}{3,1/2} 
  \lesssim
    \sum\limits_{j=1}^{2}
    \normsc{B_{j, \varphi}v\br}{7/2-k_{j}}
    + \Norm{Q_{\varphi} v}{+} + \Normsc{v}{4,-1}
    + \normsc{\trace(v)}{3,-1/2} ,
\end{align*}
for $\tau \geq \max\limits_{i\in I} \tau_i$ for some $\tau_i>0$. 
Therefore, by choosing $\tau \geq \tau_0$ \suff large and $\gamma\geq 1$ one obtains
the result of Proposition~\ref{prop: local boundary estimate}.
\hfill \qedsymbol \endproof
\section{Local Carleman estimate for the augmented conjugated operator}
We recall that $Q=D^4_s+\Delta^2=Q_1Q_2$ with $Q_k=(-1)^kiD^2_s+A$ and $A=-\Delta.$ Setting $Q_{\varphi}=e^{\tau\varphi}Qe^{-\tau\varphi},$
\begin{equation}\label{micro1}
Q_{\varphi}=P_1P_2~~~\text{with}~~~P_k=e^{\tau\varphi}Q_ke^{-\tau\varphi}=(-1)^ki(D_s+i\tau\partial_s\varphi(z))^2+A_\varphi,
\end{equation}
with, in the selected normal geodesic coordinates,
$$A_\varphi=e^{\tau\varphi}Ae^{-\tau\varphi}=(D_d+i\tau\partial_d\varphi(z))^2+R(x,D_x'+i\tau d_{x'}\varphi(z)),~~~z=(s,x).$$
In \cite[Section 4.5]{JL}, a local Carleman estimate for the augmented conjugated operator $\Q_\varphi$ is obtained by combining microlocal estimates in three different microlocal regions.\\

Let $V$ be an open bounded neighborhood of $z_0=(s_0,x_0)$ in $\R^N.$ We use the following result due to J. Le Rousseau and L. Robbiano.

\begin{proposition}[\cite{JL}]\label{carl-fact}
Let $\varphi(z)=\varphi_{\gamma,\eps}(z)$ be define as in section \ref{SB}. There exists an open neighborhood $W$ of $z_0$ in $(0,S_0)\times\R^d$, $W\subset V$, and there exist $\tau_0\geq \tau_*,\gamma\geq 1,\eps_0\in (0,1]$ such that
$$\gamma\Normsc{\tilde{\tau}^{-1}v}{4,0}+\normsc{\trace(v)}{3,1/2} \lesssim \Norm{Q_\varphi v}{+}+\normsc{\trace(v)}{1,5/2}, $$
for $\tau\geq \tau_0,$ $\gamma\geq \gamma_0,$ $\eps\geq \eps_0,$ and $v=w_{|\ovl{\R^N_+}}$ with $w\in\Cinfc(\R^N)$ and $\supp(w)\subset W.$
\end{proposition}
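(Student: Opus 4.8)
The plan is to reconstruct the argument of \cite[Section~4.5]{JL}, which rests on the factorization $Q_\varphi=P_1P_2$ of \eqref{micro1} together with microlocal \emph{second-order} Carleman estimates for each conjugated factor, glued by a partition of unity in the tangential phase space. The starting observation is that the principal symbol of $P_k$, $k=1,2$, is $(\xi_d+i\tau\partial_d\varphi)^2+\gamma_k(\varrho')$, a degree-two polynomial in $\xi_d$ with roots $\pi_{k,\pm}$ given by \eqref{lsaugmt7}: since $\partial_d\varphi>0$ one always has $\Im\pi_{k,-}<0$, while $\Im\pi_{k,+}$ may change sign, and the two semiclassical characteristic sets of $P_1$ and $P_2$ can intersect. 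On that intersection $d_{z,\zeta}q_\varphi$ and $\{\overline{q_\varphi},q_\varphi\}$ vanish simultaneously, so no choice of weight makes $Q_\varphi$ sub-elliptic; this \emph{flatness}, discussed in the introduction, is precisely why the left-hand side must carry the lossy norm $\gamma\,\Normsc{\tilde{\tau}^{-1}v}{4,0}$, one derivative below the full-strength $\Normsc{v}{4,0}$.

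Concretely, following \cite{JL} I would split the cone in $(\sigma,\xi',\tau)$ into three conic microlocal regions. In two of them --- morally the regions where one of the factors $P_1,P_2$ is non-degenerate, i.e. either semiclassically elliptic in $\xi_d$ or with all its $\xi_d$-roots in the lower half-plane so that an estimate of the type of Lemma~\ref{el1} applies to it --- the factorization $Q_\varphi v=P_1(P_2v)$ reduces the problem to a genuine second-order Carleman estimate for the other factor, which does satisfy the strict sub-ellipticity condition once $\varphi=\varphi_{\gamma,\eps}=e^{\gamma\psi_\eps}$ from Section~\ref{SB} is used with $\gamma$ large and $\eps$ small; there one obtains the full-strength microlocal bound $\Normsc{\Opt(\chi)v}{4,0}+\normsc{\trace(\Opt(\chi)v)}{3,1/2}\lesssim\Norm{Q_\varphi v}{+}+\normsc{\trace(v)}{1,5/2}+(\text{lower order})$. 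In the third region, a conic neighborhood of the set where both factors are characteristic, sub-ellipticity genuinely fails; there I would run a convexification / positive-commutator argument in which the large parameter $\gamma$ manufactures the positive bracket term compensating the degeneracy, but only at the cost of one derivative, which yields precisely the weak form with the $\gamma\,\tilde{\tau}^{-1}$ prefactor.

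Finally I would choose a microlocal partition of unity $\sum_i\chi_i\equiv1$ (homogeneous of degree $0$, up to a smoothing term supported off the three regions) subordinate to these regions, apply the appropriate microlocal estimate to each $\Opt(\chi_i)v$, sum, and absorb into the left-hand side the commutator errors $[\Opt(\chi_i),Q_\varphi]$ and $[\Opt(\chi_i),\cdot]$ (lower order in the three-parameter calculus of Section~\ref{SB}), the interior remainders of the form $\Normsc{v}{4,-1}$, and the smoothing term, all for $\tau\geq\tau_0$, $\gamma\geq\gamma_0$ large and $\eps\leq\eps_0$ small. The hard part will be the third region: with the two characteristic sets overlapping there, the standard second-order Carleman machinery is unavailable and positivity has to be engineered from the weight $e^{\gamma\psi}$ by letting $\gamma$ grow, which forces the one-derivative loss recorded in $\gamma\,\Normsc{\tilde{\tau}^{-1}v}{4,0}$. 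A secondary delicate point is the bookkeeping of trace norms across the three regions, so that the right-hand side retains only $\normsc{\trace(v)}{1,5/2}$ --- which involves merely the two lowest normal derivatives of $v$ on $\{x_d=0\}$ and is therefore strictly weaker than the $\normsc{\trace(v)}{3,1/2}$ on the left; keeping the remainder this weak is exactly what later makes it absorbable through the boundary estimate of Proposition~\ref{prop: local boundary estimate}.
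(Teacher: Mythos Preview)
Your outline is correct and follows essentially the same approach as \cite[Section~4.5]{JL}, which the present paper cites without reproducing a proof: the factorization $Q_\varphi=P_1P_2$, a three-region microlocal decomposition according to the root configuration of the factors, sub-elliptic second-order estimates in the two ``good'' regions and a degenerate estimate in the third where the characteristic sets overlap (handled via the convexification parameter~$\gamma$ at the cost of the $\tilde\tau^{-1}$ loss), all patched by a tangential partition of unity with commutator remainders absorbed for large~$\tau,\gamma$. One small slip: the parameter range in the statement is $\eps\geq\eps_0$, not $\eps\leq\eps_0$.
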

Putting together the estimates of Propositions \ref{prop: local boundary estimate} and \ref{carl-fact}, we produce the following final local estimate for the augmented operator $Q.$
\begin{theorem}\label{carleman-local}
Let $Q=\Delta^2+D_s^4$ and $(s^0,x^0) \in (0,S_0)\times\partial \Omega$, with
  $\Omega$ locally given by $\{ x_d>0\}$ and $S_0>0$. Assume that $(Q,B_1,B_2,\varphi)$ satisfies the \LS condition of Definition \ref{def: LS after conjugation} at $\y'=(s,x,\sigma,\xi',\tau,\gamma,\eps)$ for all $(\sigma,\xi',\tau,\gamma,\eps)\in\R\times\R^{d-1}\times (0,+\infty)\times [1,+\infty)\times(0,1].$
  Let $\varphi(z)=\varphi_{\gamma,\eps}(z)$ be define as in section \ref{SB}. There exists an open neighborhood $W$ of $z_0$ in $(0,S_0)\times\R^d$, $W\subset V$, and there exist $\tau_0\geq \tau_*,\gamma\geq 1,\eps_0\in (0,1]$ such that
$$\gamma\Normsc{\tilde{\tau}^{-1}e^{\tau\varphi}u}{4,0}+\normsc{\trace(e^{\tau\varphi}u)}{3,1/2} \lesssim \Norm{e^{\tau\varphi}Qu}{+}+\sum\limits_{j=1}^{2}
\normsc{e^{\tau\varphi}B_{j}u\br}{7/2-k_{j}}, $$
for $\tau\geq \tau_0,$ $\gamma\geq \gamma_0,$ $\eps\geq \eps_0,$ and  $u\in\Cbarc(W_+).$
\end{theorem}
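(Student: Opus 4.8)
The plan is to derive Theorem~\ref{carleman-local} by superposing the factorized Carleman estimate of Proposition~\ref{carl-fact} with the trace estimate of Proposition~\ref{prop: local boundary estimate}: these two results together already supply almost everything, and the only genuine step is to reconcile their hypotheses and then absorb one residual interior term, which is possible thanks to the extra factor $\gamma$ carried by the left-hand side of Proposition~\ref{carl-fact}.

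First I would pass to the conjugated operator. Setting $v=e^{\tau\varphi}u$, and using that $e^{\tau\varphi}$ is smooth and nowhere vanishing, one has $v\in\Cbarc(W_+)$ whenever $u\in\Cbarc(W_+)$, together with $Q_\varphi v=e^{\tau\varphi}Qu$ and $B_{j,\varphi}v\br=e^{\tau\varphi}B_ju\br$ for $j=1,2$. It thus suffices to prove
\[\gamma\Normsc{\tilde{\tau}^{-1}v}{4,0}+\normsc{\trace(v)}{3,1/2}\lesssim\Norm{Q_\varphi v}{+}+\sum_{j=1}^{2}\normsc{B_{j,\varphi}v\br}{7/2-k_{j}}\]
for $v\in\Cbarc(W_+)$, $\tau\geq\tau_0$, $\gamma\geq\gamma_0$, $\eps\geq\eps_0$, and then undo the substitution. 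Here $W$ is taken inside both neighborhoods furnished by Propositions~\ref{carl-fact} and~\ref{prop: local boundary estimate}, and $\tau_0,\gamma_0,\eps_0$ as the largest of their respective thresholds; the hypothesis that $(Q,B_1,B_2,\varphi)$ satisfies the \LS condition of Definition~\ref{def: LS after conjugation} at every $(\sigma,\xi',\tau,\gamma,\eps)$ with $\tau>0$ is exactly what licenses the use of Proposition~\ref{prop: local boundary estimate}, while Proposition~\ref{carl-fact} applies to the present weight function by its construction in Section~\ref{SB}.

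Next I would chain the two inequalities. Since $\normsc{\trace(v)}{1,5/2}^2$ is the partial sum over $j=0,1$ of $\normsc{\trace(v)}{3,1/2}^2$, one has $\normsc{\trace(v)}{1,5/2}\leq\normsc{\trace(v)}{3,1/2}$, so Proposition~\ref{prop: local boundary estimate} gives
\[\normsc{\trace(v)}{1,5/2}\leq\normsc{\trace(v)}{3,1/2}\lesssim\Norm{Q_\varphi v}{+}+\sum_{j=1}^{2}\normsc{B_{j,\varphi}v\br}{7/2-k_{j}}+\Normsc{v}{4,-1}.\]
Substituting this into the right-hand side of Proposition~\ref{carl-fact} yields
\[\gamma\Normsc{\tilde{\tau}^{-1}v}{4,0}+\normsc{\trace(v)}{3,1/2}\lesssim\Norm{Q_\varphi v}{+}+\sum_{j=1}^{2}\normsc{B_{j,\varphi}v\br}{7/2-k_{j}}+\Normsc{v}{4,-1},\]
with no circularity, since the trace term on the left no longer occurs on the right.

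Finally I would absorb the residual interior term $\Normsc{v}{4,-1}$, and this is the step I expect to be delicate. The mechanism is that $\Lsct^{-1}$ gains a factor $(\tau\gamma)^{-1}$ in operator norm, because $\lsct\geq\tilde{\tau}=\tau\gamma\varphi$ with $\inf\varphi>0$, so $\Normsc{v}{4,-1}\lesssim(\tau\gamma)^{-1}\Normsc{v}{4,0}$; on the other hand $\tilde{\tau}^{-1}\asymp(\tau\gamma)^{-1}$ with controlled derivatives, whence $\gamma\Normsc{\tilde{\tau}^{-1}v}{4,0}\asymp\tau^{-1}\Normsc{v}{4,0}$. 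Comparing gives $\Normsc{v}{4,-1}\lesssim\gamma^{-1}\bigl(\gamma\Normsc{\tilde{\tau}^{-1}v}{4,0}\bigr)$, so after enlarging $\gamma_0$ one may move $\tfrac12\gamma\Normsc{\tilde{\tau}^{-1}v}{4,0}$ to the left; the care lies in the bookkeeping of the lower-order commutator contributions hidden in these reductions. Everything else is a direct, essentially formal, combination of Propositions~\ref{carl-fact} and~\ref{prop: local boundary estimate} with the elementary trace inequality above. Substituting $v=e^{\tau\varphi}u$ back then produces the statement of Theorem~\ref{carleman-local}.
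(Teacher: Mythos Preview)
Your proposal is correct and follows exactly the approach the paper indicates: the paper simply states that Theorem~\ref{carleman-local} is obtained by ``putting together the estimates of Propositions~\ref{prop: local boundary estimate} and~\ref{carl-fact}'' without further detail, and you have carried out precisely that combination, including the trace-norm comparison $\normsc{\trace(v)}{1,5/2}\leq\normsc{\trace(v)}{3,1/2}$ and the absorption of $\Normsc{v}{4,-1}$ via the factor $\gamma$ on the left.
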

We recall that the notion of the function space is introduced in (\ref{eq: notation Cbarc}).\\
\\
Using the notations of Section \ref{tangential-semi} for semi-classical norms, we can write the Carleman inequality of Theorem \ref{carleman-local} as follows:
$$\gamma\sum_{|\alpha|\leq 4}\|\tilde{\tau}^{3-|\alpha|}e^{\tau\varphi}D^{\alpha}_{s,x}u\|_{+}+\sum\limits_{r=0}^3| e^{\tau\varphi}D^r_{x_N}u_{|\partial Z}|_{7/2-r,\tilde{\tau}}\lesssim \Norm{e^{\tau\varphi}Q u}{+}+\sum\limits_{j=1}^{2}
    \normsc{e^{\tau\varphi}B_{j}u\br}{7/2-k_{j}}.$$
    
\section{Spectral inequality and an application}
Before we state and prove the spectral inequality, we start by stating an interpolation-type inequality which follows from the Carleman inequality of Theorem  \ref{carleman-local}. Finally, as an application of such a spectral inequality, we deduce a null-controllability result of the fourth order parabolic equation (\ref{eq: parabolic}).
\subsection{An interpolation inequality}
Let $S_0>0$ and $\alpha\in (0,S_0/2)$. We introduce the cylinder $Z=(0,S_0)\times\Omega$ and we define $Y=(\alpha,S_0-\alpha)\times\Omega$ for some $\alpha>0.$ We recall that $Q$ denotes the augmented elliptic operator $Q:=D^4_s+\Delta^2.$\\
 Following the proof the the interpolation inequality of Theorem 5.1 in \cite[Section 5]{JL}, one can derive the following interpolation inequality.
\begin{theorem}(interpolation inequality)\label{Interp}.
Let $\scrO$ be nomempty subset of $\Omega.$ There exist $C>0$ and $\delta\in (0,1)$ such that for $u\in H^4(Z)$ that satisfies
$$B_1u_{|{x\in\partial \Omega}}=0,~~~~B_2u_{|{x\in\partial \Omega}}=0,~~~~~~~~~~~~~~s\in (0,S_0),$$ we have
\begin{equation}\label{interpolation inequality}
\|u\|_{H^3(Y)}\leq  C\|u\|^{1-\delta}_{H^3(Z)}\left(\|Qu\|_{L^2(Z)}+\sum\limits_{j=1}^{2}|B_{j}u_{|s=0}|_{H^{3-k_{j}}(\scrO)}\right)^\delta,
\end{equation}
where $H^m(\Omega)$ denote the classical Sobolev spaces in $\Omega.$
\end{theorem}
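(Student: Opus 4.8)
The strategy is the classical Lebeau--Robbiano--Le Rousseau--Robbiano scheme for deriving an interpolation inequality from a Carleman estimate across an artificial interface. I would work in the cylinder $Z = (0,S_0)\times\Omega$ with the augmented operator $Q = D_s^4 + \Delta^2$ and exploit the local Carleman estimate of Theorem~\ref{carleman-local}, together with analogous interior Carleman estimates for $Q$ (with no boundary term, obtained by the same factorization argument or quoted from \cite{JL}), patched over a finite cover of a slightly larger cylinder. The key geometric object is a smooth weight function $\varphi$ on a neighborhood of $\ovl{Z}$ whose level sets foliate the region between $\{s=0\}$ (where the observation $B_ju_{|s=0}$ is available, localized near $\scrO$) and the slab $Y = (\alpha, S_0-\alpha)\times\Omega$, and which satisfies $\partial_d\varphi > 0$ at $\partial\Omega$ so that the \LS hypothesis of Theorem~\ref{carleman-local} is usable. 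One must check at the outset, via Proposition~\ref{prop: LS after conjugation}, that such a $\varphi$ can be chosen with the \LS condition for $(Q,B_1,B_2,\varphi)$ holding throughout; this is exactly the content of the second Remark following Theorem~\ref{spectineg}.

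First I would reduce to a local statement: cover $\ovl{Y}$ and the relevant portion of $\{0 < s < S_0-\alpha/2\}\times\ovl{\Omega}$ by finitely many balls, distinguishing interior balls (away from $\partial\Omega$), boundary balls (meeting $\partial\Omega$, where Theorem~\ref{carleman-local} applies to $e^{\tau\varphi}u$ with the boundary terms $\sum_j \normsc{e^{\tau\varphi}B_ju\br}{7/2-k_j}$ vanishing by the hypothesis $B_ju_{|x\in\partial\Omega}=0$), and balls near the face $\{s=0\}$, where we do not have a boundary estimate but instead keep the trace of $u$ and its $s$-derivatives on $\{s=0\}\cap(\text{nbhd of }\scrO)$ as a controlled right-hand side. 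Applying each local Carleman estimate to $\chi u$ for a suitable cutoff $\chi$ that equals $1$ near the level set of $\varphi$ passing through $Y$ and is supported where the commutator terms can be absorbed, and using that $[Q,\chi]u$ is supported in a region where $\varphi$ is strictly smaller than on $Y$, I would obtain, after summing and choosing $\tau$ large,
\begin{equation*}
  e^{\tau c_1}\|u\|_{H^3(Y)} \lesssim e^{\tau c_0}\Big(\|Qu\|_{L^2(Z)} + \sum_{j=1}^2 |B_ju_{|s=0}|_{H^{3-k_j}(\scrO)}\Big) + e^{\tau c_0}\|u\|_{H^3(Z)},
\end{equation*}
with $c_0 < c_1$ constants coming from $\inf_{Y}\varphi$ versus $\sup$ of $\varphi$ on the supports of the commutators and of the non-estimated trace. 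Here one needs the standard trick of splitting $Z$ into a ``good'' part where $\varphi$ is large and a ``bad'' part absorbed into the $\|u\|_{H^3(Z)}$ term.

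Then I would run the classical optimization argument: writing $A = \|Qu\|_{L^2(Z)} + \sum_j |B_ju_{|s=0}|_{H^{3-k_j}(\scrO)}$ and $B = \|u\|_{H^3(Z)}$, the inequality reads $\|u\|_{H^3(Y)} \lesssim e^{-\tau(c_1-c_0)}B + e^{\tau c_0} A$ for all $\tau \geq \tau_0$. If $A \geq e^{-\tau_0 c_1}B$ one chooses $\tau$ so that $e^{\tau c_0}A \asymp e^{-\tau(c_1-c_0)}B$, i.e. $e^{\tau c_1} = B/A$ (valid since then $\tau \geq \tau_0$), yielding $\|u\|_{H^3(Y)} \lesssim B^{c_0/c_1}A^{(c_1-c_0)/c_1}$, which is \eqref{interpolation inequality} with $\delta = (c_1-c_0)/c_1 \in (0,1)$; and if $A < e^{-\tau_0 c_1}B$ the estimate $\|u\|_{H^3(Y)} \lesssim \|u\|_{H^3(Z)}$ holds trivially, so the interpolation form follows from $\|u\|_{H^3(Y)}\le\|u\|_{H^3(Z)}$ and the bound $A \lesssim \|u\|_{H^3(Z)}^{1-\delta} A^\delta$ in that regime. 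A standard density argument extends the estimate from $u$ with the smoothness implicit in the Carleman estimates to all $u \in H^4(Z)$ satisfying the boundary conditions.

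The main obstacle is the bookkeeping at the three types of patches simultaneously: in particular, handling the face $\{s=0\}$ correctly so that only the localized observation $|B_ju_{|s=0}|_{H^{3-k_j}(\scrO)}$ — and not the full Cauchy data of $u$ on $\{s=0\}$ — appears on the right. This requires choosing the weight $\varphi$ so that on $\{s=0\}$ its gradient points ``outward'' only over $\scrO$ (so that the portion of $\{s=0\}$ outside $\scrO$ contributes a negligible, exponentially dominated trace term that can be absorbed), exactly as in \cite[Section~5]{JL}; verifying that the \LS-after-conjugation hypothesis of Theorem~\ref{carleman-local} survives this choice of $\varphi$ near $(0,S_0)\times\partial\Omega$ is where the work of Section~\ref{sec: LS condition for conjugated bilaplace} is essential. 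Beyond that, the argument is a routine adaptation of the cited proof, with the only new ingredient being that the boundary terms in the Carleman estimate are killed by the homogeneous boundary conditions $B_1u = B_2u = 0$ on $\partial\Omega$.
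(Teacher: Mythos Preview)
Your proposal is correct and follows exactly the approach the paper indicates, namely the scheme of \cite[Section~5]{JL}: patch local Carleman estimates (interior, at $\partial\Omega$ via Theorem~\ref{carleman-local} with the boundary terms killed by $B_ju_{|\partial\Omega}=0$, and near $\{s=0\}$ with the observation localized over $\scrO$), then optimize in $\tau$. Note only a typo in your first displayed inequality, where the term $e^{\tau c_0}A$ should carry a \emph{larger} exponent than $c_1$ (otherwise sending $\tau\to\infty$ would give $u\equiv 0$ on $Y$); your subsequent line $\|u\|_{H^3(Y)}\lesssim e^{-\tau(c_1-c_0)}B+e^{\tau c_0}A$ and the optimization that follows are the intended and correct forms.
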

\subsection{Spectral inequality}\label{spectral}
Let $\phi_j$ and $\mu_j$ be eigenfunctions and associated eigenvalues of the bi-Laplace operator $P=\Delta^2$ with the boundary operators $B_1$ and $B_2$, which form a Hilbert basis for $L^2(\Omega)$ (see section \ref{sec: Examples  of boundary operators yielding symmetry} ), with \begin{align*}
0< \mu_0 \leq \mu_1 \leq \cdots \leq \mu_k \leq \cdots. 
\end{align*}
We now prove the spectral inequality of Theorem \ref{spectineg}, viz., 
for some $K>0,$
\begin{equation}\label{prfspec}
\|y\|_{L^2(\Omega)}\leq K e^{K\mu^{1/4}}\|y\|_{L^2(\omega)},~~\mu>0,~~
y\in\Span\{\phi_j;~\mu_j\leq \mu\}.
\end{equation}
We use the following corollary, whose proof can be adapted to the proof of Corollary 3.3  in \cite[Section 3.1]{JGL-vol2}.
\begin{corollary}\label{lsineq}
Assume that $(P,B_1,B_2)$ fulfills the \LS condition on $\partial\Omega.$ Then there exists $C>0$ such that 
$$\|u\|_{H^4(\Omega)}\leq C\left(\|Pu\|_{L^2(\Omega)}+|B_1u_{|\partial\Omega}|_{H^{7/2-k_1}(\partial\Omega)}+|B_1u_{|\partial\Omega}|_{H^{7/2-k_2}(\partial\Omega)}\right)$$ for all $u\in H^4(\Omega).$
\end{corollary}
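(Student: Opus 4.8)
The statement is the classical elliptic estimate with estimate of the trace terms for the boundary value problem $(P,B_1,B_2)$ under the \LS condition, and the plan is to reduce it to the standard interior and boundary regularity theory for elliptic systems satisfying the Lopatinski\u{\i}-\v{S}apiro (Agmon--Douglis--Nirenberg) condition. First I would recall that, since $(P,B_1,B_2)$ satisfies the \LS condition on $\partial\Omega$, the operator
$$L:H^4(\Omega)\to L^2(\Omega)\oplus H^{7/2-k_1}(\partial\Omega)\oplus H^{7/2-k_2}(\partial\Omega),\qquad u\mapsto (Pu,\,B_1u_{|\partial\Omega},\,B_2u_{|\partial\Omega}),$$
is Fredholm; this is exactly the operator introduced in \eqref{eq: Fredholm operator} in the excerpt, and its Fredholm property is precisely Theorem~20.1.7 (and the surrounding material) in \cite{Hoermander:V3}, equivalently the standard $L^p$/Sobolev elliptic boundary estimates (see also \cite{JGL-vol2}, Corollary~3.3). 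The Fredholm property yields, for some finite-dimensional space $\mathcal N\subset H^4(\Omega)$ (the kernel of $L$) and a bounded projection, an a priori estimate of the form
\begin{equation*}
  \|u\|_{H^4(\Omega)}\leq C\Big(\|Pu\|_{L^2(\Omega)}+|B_1u_{|\partial\Omega}|_{H^{7/2-k_1}(\partial\Omega)}+|B_2u_{|\partial\Omega}|_{H^{7/2-k_2}(\partial\Omega)}+\|u\|_{L^2(\Omega)}\Big)
\end{equation*}
for all $u\in H^4(\Omega)$, the last term absorbing the contribution from $\mathcal N$ together with a compactness argument.

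The second step is to remove the lower-order term $\|u\|_{L^2(\Omega)}$ on the right-hand side. Here I would invoke the functional-analytic facts already established in the excerpt: under assumptions (i)--(iv) the operator $(\Pell,D(\Pell))$ is selfadjoint, nonnegative, and $\Pell+\id$ is boundedly invertible on $L^2(\Omega)$, i.e. $\range(\Pell+\id)=L^2(\Omega)$ and $\Pell+\id$ is injective. Since we assumed (in the proof of the spectral inequality, via the Remark following Theorem~\ref{spectineg}) that $\mu_0>0$, in fact $\Pell$ itself is boundedly invertible, so $\ker L=\{0\}$ and $L$ is an isomorphism onto its image; then the term $\|u\|_{L^2(\Omega)}$ is no longer needed and one obtains the stated estimate directly. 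Alternatively, if one does not wish to appeal to invertibility of $\Pell$, a standard contradiction-and-compactness argument works: if the estimate without the $\|u\|_{L^2}$ term failed there would be a sequence $u_n$ with $\|u_n\|_{H^4}=1$ while the right-hand data tend to $0$; by Rellich $u_n\to u$ in $H^3(\Omega)$ along a subsequence, the a priori estimate above forces $u_n$ Cauchy in $H^4(\Omega)$, hence $u_n\to u$ in $H^4$ with $\|u\|_{H^4}=1$, $Pu=0$, $B_ju_{|\partial\Omega}=0$; but then $u\in\ker L=\{0\}$ (using $\mu_0>0$), a contradiction.

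The main obstacle is essentially bookkeeping rather than a deep difficulty: one must check that the quoted elliptic regularity theorem applies with the exact Sobolev indices $7/2-k_j$ for the boundary data, which is the natural trace space for an order-$k_j$ operator acting on $H^4(\Omega)$ (the trace of $D^j_\nu u$ lies in $H^{4-j-1/2}(\partial\Omega)$, and $B_j$ of order $k_j$ produces a datum in $H^{7/2-k_j}(\partial\Omega)$), and that $\Omega$ being a smooth compact manifold with boundary (or a bounded smooth domain) is covered by the cited references. Thus the proof is: (1) cite the \LS elliptic a priori estimate (Fredholmness of $L$, cf.\ \eqref{eq: Fredholm operator}, \cite{Hoermander:V3,JGL-vol2}), giving the estimate with an extra $\|u\|_{L^2(\Omega)}$; (2) remove that term using injectivity of $\Pell$ (guaranteed by $\mu_0>0$) together with compactness, concluding the desired inequality.
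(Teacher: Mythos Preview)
Your plan is correct and is essentially the standard route the paper is invoking when it says the proof ``can be adapted to the proof of Corollary~3.3 in \cite{JGL-vol2}'': derive the a priori estimate with an extra lower-order term $\|u\|_{L^2(\Omega)}$ from the \LS boundary regularity theory (Fredholmness of $L$ in \eqref{eq: Fredholm operator}), then remove that term via the compactness/contradiction argument using $\ker L=\{0\}$. Your observation that this last step requires $\mu_0>0$ (equivalently, injectivity of $\Pell$) is accurate and worth stating explicitly, since the corollary as written does not record this hypothesis; in the paper it is in force because of the reduction in the Remark following Theorem~\ref{spectineg} and the standing assumption $0<\mu_0$ made at the start of Section~\ref{spectral}.
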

We recall that $k_1$ and $k_2$ denote respectively the orders of $B_1$ and $B_2.$
\begin{proof}
Let $\mu>0$ and $y\in\Span\{\phi_j;~\mu_j\leq \mu\},$ meaning that $y=\sum\limits_{\mu_j\leq\mu}\alpha_j\phi_j.$ We set 
$$u(s,x)=\sum\limits_{\mu_j\leq\mu}\alpha_j\mu_j^{-3/4}f(s\mu_j^{1/4})\phi_j(x).$$ Here $\alpha_j\in\C$ for $j=0,\dots, k$ with $k\in\N,$ where $f(s)=\beta\sin(\beta s)\cosh(\beta s)-\beta\cos(\beta s)\sinh(\beta s)$ with $\beta=\sqrt{2}/2.$ Straightforward computation shows that $D_s^4f=\partial^4_sf=-f.$ Therefore, we have $Qu=0$ with $Q=D^4_s+\Delta^2$. Moreover, we note that
$$f(0)=f'(0)=f^{(2)}(0)=0, f^{(3)}(0)=1$$
and $$f(s)=h(\beta s)=\frac{1}{2}\left(e^{-s}\cos(s-\frac{\pi}{4})-e^s\cos(s+\frac{\pi}{4})\right).$$
Applying inequality (\ref{interpolation inequality}) of Theorem \ref{Interp} to $u$ reads $$\|u\|_{H^3(Y)}\leq C \|u\|^{1-\delta}_{H^3(Z)}\left(\sum\limits_{j=1}^{2}|B_{j}u_{|s=0}|_{H^{3-k_{j}}(\scrO)}\right)^\delta.$$
As $\partial_s^3u_{|s=0}=\sum\limits_{\mu_j\leq\mu}\alpha_j\phi_j(x)=y,$ we have $\sum\limits_{j=1}^{2}|B_{j}u_{|s=0}|_{H^{3-k_{j}}(\scrO)}\asymp\|y\|_{L^2(\scrO)}.$ Then
$$\|u\|_{H^3(Y)}\leq C \|u\|^{1-\delta}_{H^3(Z)}\|y\|_{L^2(\scrO)}^\delta.$$ We also have $\|u\|_{L^2(Y)}\lesssim\|u\|_{H^3(Y)}$ with
\begin{align*}
\|u\|_{L^2(Y)}&=\sum\limits_{\mu_j\leq\mu}\mu_j^{-3/2}|\alpha_j|^2\int_{\alpha}^{S_0-\alpha}f(\mu_j^{1/4}s)^2ds.
\end{align*}
Using the change of variable $t=\beta\mu_j^{1/4}s,$ it follows that
\begin{align*}
\|u\|_{L^2(Y)}&=\sum\limits_{\mu_j\leq\mu}\mu_j^{-7/4}|\alpha_j|^2\int_{\alpha\gamma\mu_j^{1/4}}^{(S_0-\alpha)\gamma\mu_j^{1/4}}f(\gamma^{-1}t)^2dt\\
&=\sum\limits_{\mu_j\leq\mu}\mu_j^{-7/4}|\alpha_j|^2\int_{\alpha\gamma\mu_j^{1/4}}^{(S_0-\alpha)\gamma\mu_j^{1/4}}h(t)^2dt\\
&\gtrsim \mu^{-7/4}\sum\limits_{\mu_j\leq\mu}|\alpha_j|^2=\mu^{-7/4}\|y\|_{L^2(\Omega)};
\end{align*}
we have used the following lemma, whose proof can be found in \cite[Lemma 5.7]{JL}.
\begin{lemma}
    Let $0<a<b$ and $t_0>0.$ There exists $C>0$ such that $\int_{at}^{bt}h(s)^2ds\geq C_0$ for $t\geq t_0.$
\end{lemma}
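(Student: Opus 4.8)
The plan is to prove the lemma by combining a soft compactness argument with one explicit estimate showing that the integral grows. First I would recall that, from the closed form $h(s) = \tfrac12\bigl(e^{-s}\cos(s-\pi/4) - e^{s}\cos(s+\pi/4)\bigr)$ established just above, the function $h$ is real-analytic on $\R$ and not identically zero; hence its zero set is discrete. Consequently, for each fixed $t>0$ the quantity $\Phi(t) := \int_{at}^{bt} h(s)^2\,ds$ is \emph{strictly} positive, and it is continuous (indeed $C^1$) in $t$, e.g.\ by the fundamental theorem of calculus applied to the two endpoints $at$ and $bt$.

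The main step is to show $\Phi(t)\to +\infty$ as $t\to+\infty$. I would write $h(s) = -\tfrac12 e^{s}\cos(s+\pi/4) + r(s)$, where $|r(s)| = \tfrac12 e^{-s}|\cos(s-\pi/4)| \le \tfrac12$ for $s\ge 0$, so that $h(s)^2 \ge \tfrac14 e^{2s}\cos^2(s+\pi/4) - \tfrac12$ on $[0,\infty)$. Using $\cos^2(s+\pi/4) = \tfrac12(1-\sin 2s)$ together with the antiderivative $\int e^{2s}\sin 2s\,ds = \tfrac14 e^{2s}(\sin 2s - \cos 2s)$, one finds $\int_{at}^{bt} e^{2s}\cos^2(s+\pi/4)\,ds = \tfrac14\bigl(c_{bt}\,e^{2bt} - c_{at}\,e^{2at}\bigr)$ with $c_\theta = 1 - \tfrac12(\sin 2\theta - \cos 2\theta) \in [\,1-\tfrac1{\sqrt2},\, 1+\tfrac1{\sqrt2}\,]$, in particular bounded below by a positive constant. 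Since $b>a$, for $t$ large the term $c_{bt}e^{2bt}$ dominates $c_{at}e^{2at}$, hence $\int_{at}^{bt} e^{2s}\cos^2(s+\pi/4)\,ds \gtrsim e^{2bt}$, and therefore $\Phi(t)\gtrsim e^{2bt} - (b-a)t \to +\infty$.

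With these two facts the conclusion is routine: I would pick $T\ge t_0$ with $\Phi(t)\ge 1$ for all $t\ge T$; on the compact interval $[t_0,T]$ the continuous strictly positive function $\Phi$ attains a positive minimum $m$; and then $C_0 := \min(m,1)>0$ satisfies $\Phi(t)\ge C_0$ for all $t\ge t_0$, which is the asserted lower bound.

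The only place where a genuine computation enters — and hence the main, though mild, obstacle — is the growth estimate: one must ensure the oscillatory part of $\int e^{2s}\sin 2s$ does not cancel the exponential growth near the upper endpoint $s=bt$, which is exactly what the lower bound $c_\theta \ge 1-\tfrac1{\sqrt2}>0$ secures. Everything else (analyticity $\Rightarrow$ strict positivity for fixed $t$, continuity of $\Phi$, compactness) is standard. If one prefers to avoid even this computation, an alternative is to discard all but one carefully chosen length-$\pi$ subinterval near $bt$ on which $\cos^2(s+\pi/4)$ stays above a fixed constant over a definite fraction of its length, and bound $\int e^{2s}\cos^2(s+\pi/4)$ from below there; the antiderivative argument above is simply shorter.
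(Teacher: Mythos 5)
Your approach is sound and essentially complete; note that the paper itself does not include a proof of this lemma (it cites Lemma 5.7 of \cite{JL}), so I can only assess your argument on its own. The overall structure — strict positivity of $\Phi(t) := \int_{at}^{bt}h^2$ for each fixed $t>0$ (via analyticity of $h$ and discreteness of its zero set), continuity of $\Phi$, explicit proof that $\Phi(t)\to\infty$, then compactness on $[t_0,T]$ — is correct and is the natural way to establish a uniform lower bound.

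There is one quantitative slip in the growth step. From $h=P+r$ with $P=-\tfrac12 e^s\cos(s+\pi/4)$ and $|r|\le\tfrac12$ for $s\ge 0$, the inequality you assert, $h^2\ge P^2-\tfrac12$, is not valid: expanding, $h^2-(P^2-\tfrac12)=2Pr+r^2+\tfrac12$, and the cross term $2Pr$ can be as negative as $-|P|$, which overwhelms $r^2+\tfrac12$ once $|P|$ is large. The correct elementary bound is $(P+r)^2\ge\tfrac12 P^2 - r^2$ (equivalent to $(P+2r)^2\ge 0$), which gives $h(s)^2\ge\tfrac18 e^{2s}\cos^2(s+\pi/4)-\tfrac14$ for $s\ge 0$. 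With this replacement the rest of your computation goes through unchanged: the antiderivative of $e^{2s}\cos^2(s+\pi/4)$ is $\tfrac14 e^{2s}\bigl(1-\tfrac12(\sin 2s-\cos 2s)\bigr)$, the bracketed factor stays in $[1-\tfrac1{\sqrt2},\,1+\tfrac1{\sqrt2}]$, hence $\int_{at}^{bt}e^{2s}\cos^2(s+\pi/4)\,ds\gtrsim e^{2bt}$ for $t$ large because $b>a$, and so $\Phi(t)\gtrsim e^{2bt}-(b-a)t\to\infty$. The remainder of the argument (strict positivity at each $t$, continuity, compactness on $[t_0,T]$) is unaffected. So: correct approach, with a minor constant error that should be repaired as indicated.
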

Hence we obtain
\begin{equation}\label{syl1}
    \|y\|_{L^2(\Omega)}\lesssim \mu^{7/4}\|u\|^{1-\delta}_{H^3(Z)}\|y\|_{L^2(\scrO)}^\delta.
\end{equation}
In order to complete the proof, we now estimate $\|u\|_{H^3(Z)}$ with the use of the following lemma, which by estimate (\ref{syl1}), allows to conclude the proof of Theorem \ref{spectineg}.
\end{proof}
\begin{lemma}
There exists $K>0$ such that $\|u\|_{H^3(Z)}\leq Ke^{K\mu^{1/4}} \|y\|_{L^2(\Omega)}.$
\end{lemma}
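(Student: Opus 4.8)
The goal is the bound $\|u\|_{H^3(Z)}\leq Ke^{K\mu^{1/4}}\|y\|_{L^2(\Omega)}$, where $u(s,x)=\sum_{\mu_j\leq\mu}\alpha_j\mu_j^{-3/4}f(s\mu_j^{1/4})\phi_j(x)$ and $y=\sum_{\mu_j\leq\mu}\alpha_j\phi_j$. The strategy is to estimate the $H^3(Z)$-norm by splitting derivatives between the $s$-direction and the $x$-direction and using the orthonormality of $(\phi_j)$ in $L^2(\Omega)$, the spectral characterization $\Pell\phi_j=\mu_j\phi_j$, and the explicit exponential growth of the profile $f$ (equivalently $h$, since $f(s)=h(\beta s)$), which satisfies $|h^{(k)}(t)|\lesssim e^{|t|}$ for each fixed $k$.

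First I would reduce the $H^3(Z)$-norm to a finite sum $\sum_{|\beta|+\ell\leq 3}\|\partial_s^\ell\partial_x^\beta u\|_{L^2(Z)}$ (with $\partial_x^\beta$ denoting tangential/spatial derivatives of total order $|\beta|\le 3-\ell$). For the purely spatial derivatives one uses elliptic regularity for $\Pell$: since $\Delta^2\phi_j=\mu_j\phi_j$ and $\phi_j\in D(\Pell)$, Corollary~\ref{lsineq} applied to $\phi_j$ (whose boundary data vanish) gives $\|\phi_j\|_{H^4(\Omega)}\lesssim \mu_j\|\phi_j\|_{L^2(\Omega)} = \mu_j$, and by interpolation $\|\phi_j\|_{H^3(\Omega)}\lesssim \mu_j^{3/4}$. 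Each $s$-derivative of $u$ pulls down a factor $\mu_j^{1/4}$ from $f(s\mu_j^{1/4})$, so $\partial_s^\ell\partial_x^\beta$ applied to the $j$-th term produces a coefficient of size $\lesssim |\alpha_j|\,\mu_j^{-3/4}\,\mu_j^{\ell/4}\,\mu_j^{|\beta|/4}\lesssim |\alpha_j|$ (using $\ell+|\beta|\le 3$), multiplied by $h^{(\ell)}(\beta s\mu_j^{1/4})$.

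Next, because the $\phi_j$ are $L^2(\Omega)$-orthonormal (and, after dividing by powers of $\mu_j$, the spatial derivative factors are controlled in $L^2(\Omega)$ uniformly in $j$ up to the stated powers, though one must be slightly careful that $H^3(\Omega)$ bounds are not quite an $L^2$-orthogonal decomposition — one instead uses a crude Cauchy–Schwarz over the finitely many $j$ with $\mu_j\le\mu$), I would integrate in $s$ over $(0,S_0)$. The $s$-integral of $|h^{(\ell)}(\beta s\mu_j^{1/4})|^2$ over $(0,S_0)$ is bounded by $C e^{2\beta S_0\mu_j^{1/4}}\le Ce^{2\beta S_0\mu^{1/4}}$. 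Combining, $\|u\|_{H^3(Z)}^2\lesssim e^{2\beta S_0\mu^{1/4}}\sum_{\mu_j\leq\mu}|\alpha_j|^2\cdot\#\{j:\mu_j\le\mu\}$; since $\#\{j:\mu_j\le\mu\}\lesssim \mu^{d/4}$ by Weyl asymptotics (or is absorbed into $e^{K\mu^{1/4}}$ by enlarging $K$), and $\sum|\alpha_j|^2=\|y\|_{L^2(\Omega)}^2$, one gets $\|u\|_{H^3(Z)}\le Ke^{K\mu^{1/4}}\|y\|_{L^2(\Omega)}$ after absorbing polynomial factors into the exponential.

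The main obstacle is bookkeeping the mixed norms correctly: $H^3(\Omega)$ is not an orthogonal sum over the eigenfunctions, so one cannot simply say $\|\sum c_j\phi_j\|_{H^3}^2=\sum|c_j|^2\|\phi_j\|_{H^3}^2$. The clean way around this is to keep the decomposition \emph{orthogonal in $L^2(\Omega)$} at every stage: write each spatial derivative $\partial_x^\beta\phi_j$ as $\mu_j^{|\beta|/4}$ times a function $\psi_{j,\beta}$ with $\|\psi_{j,\beta}\|_{L^2(\Omega)}\lesssim 1$ — this follows from the elliptic estimate applied to $P^{|\beta|/4}$-type operators — but since the $\psi_{j,\beta}$ are no longer orthogonal, one uses $\|\sum a_j\psi_{j,\beta}\|_{L^2}\le \sum|a_j|\lesssim (\#\{\mu_j\le\mu\})^{1/2}(\sum|a_j|^2)^{1/2}$, which is the source of the harmless polynomial loss. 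An alternative that avoids this entirely: estimate $\|u\|_{H^3(Z)}$ by $\|u\|_{L^2(Z)}+\|\partial_s^3 u\|_{L^2(Z)}+\|\Delta^{3/2}_x u\|_{L^2(Z)}$-type quantities, each of which \emph{does} diagonalize over $(\phi_j)$ and over the $s$-Fourier/profile structure, then conclude as above. Either route is routine once the diagonalization is set up; the only real point is to make sure the exponential factor $e^{K\mu^{1/4}}$ comes out — and it does, purely from the $s$-support $(0,S_0)$ of $u$ and the growth rate of $h$.
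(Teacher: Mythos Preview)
Your proposal is correct in spirit and would succeed, but it is more laborious than the paper's argument and introduces an unnecessary loss (the factor $\#\{j:\mu_j\le\mu\}$, which you then have to absorb into the exponential via Weyl asymptotics).

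The paper sidesteps the orthogonality obstacle you flag in a clean way: rather than estimating $\|\phi_j\|_{H^3(\Omega)}$ for each eigenfunction and then summing non-orthogonally, it first bounds $\|\partial_s^\ell u(s,\cdot)\|_{H^{3-\ell}(\Omega)}\leq\|\partial_s^\ell u(s,\cdot)\|_{H^{4}(\Omega)}$ and then applies the elliptic estimate of Corollary~\ref{lsineq} \emph{to the full function} $\partial_s^\ell u(s,\cdot)$ (which lies in $D(\Pell)$ since it is a finite combination of eigenfunctions and the boundary operators do not involve~$s$). This gives
\[
\|\partial_s^\ell u(s,\cdot)\|_{H^4(\Omega)}\lesssim\bigl\|\Delta^2\partial_s^\ell u(s,\cdot)\bigr\|_{L^2(\Omega)}
=\Bigl\|\sum_{\mu_j\le\mu}\alpha_j\mu_j^{(\ell+1)/4}f^{(\ell)}(\mu_j^{1/4}s)\phi_j\Bigr\|_{L^2(\Omega)},
\]
and now one is back in $L^2(\Omega)$ where the $\phi_j$ \emph{are} orthonormal, so the square equals exactly $\sum_{\mu_j\le\mu}|\alpha_j|^2\mu_j^{(\ell+1)/2}|f^{(\ell)}(\mu_j^{1/4}s)|^2$. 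The $s$-integral then produces $\mu^2 e^{C\mu^{1/4}}\sum|\alpha_j|^2$ with no counting loss. This is essentially the ``diagonalize via the operator itself'' alternative you sketched at the end, made precise by passing through $H^4$ and the global elliptic estimate rather than a fractional power $\Delta^{3/2}$.
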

\begin{proof}
We have
$$\|u\|_{H^3(Z)}\asymp\sum\limits_{\ell=0}^3\int_{0}^{S_0}\|\partial^{\ell}_su(s,\cdot)\|^2_{H^{3-\ell}(\Omega)}ds\lesssim \sum\limits_{\ell=0}^3\int_{0}^{S_0}\|\partial^{\ell}_su(s,\cdot)\|^2_{H^{4}(\Omega)}ds.$$
Recalling that if $B_1u_{|{\partial\Omega}}=B_2u_{|{\partial\Omega}}=0,$ by Corollary \ref{lsineq} we find
\begin{align*}
    \|\partial^{\ell}_su(s,\cdot)\|^2_{H^{4}(\Omega)}&\lesssim\| \Delta^2\sum\limits_{\mu_j\leq\mu}\alpha_j\mu_j^{(\ell-3)/4}f^{(\ell)}(\mu_j^{1/4}s)\phi_j\|^2_{L^2(\Omega)}\\
    &=\|\sum\limits_{\mu_j\leq\mu}\alpha_j\mu_j^{(\ell+1)/4}f^{(\ell)}(\mu_j^{1/4}s)\phi_j\|^2_{L^2(\Omega)}\\
    &=\sum\limits_{\mu_j\leq\mu}|\alpha_j|^2\mu_j^{(\ell+1)/2}f^{(\ell)}(\mu_j^{1/4}s)^2\lesssim\mu^2 e^{S_0\mu^{1/4}}\sum\limits_{\mu_j\leq\mu}|\alpha_j|^2.
\end{align*}
Integrating this estimate over $(0,S_0)$ and summing over $\ell$ yields
$$\|u\|_{H^3(Z)}\leq Ke^{K\mu^{1/4}} \|y\|_{L^2(\Omega)}.$$
\end{proof}
\subsection{A null-controllability result}
Let $T>0,$ we consider the controlled evolution equation on $(0,T)\times\Omega$ under general boundary conditions:
\begin{equation}\label{evoleq}
    \begin{cases}
\partial_t y +\Delta^2y =\mathbbm{1}_{\Sigma}v
~~~~\text{in}~(t,x) \in  (0,T) \times\Omega,\\
B_1 y_{|(0,T) \times\partial \Omega}= B_2y_{|(0,T) \times\partial \Omega}=0,\\
y_{|t=0}=y^0\in L^2(\Omega),
\end{cases}
\end{equation}
where $\Sigma$ is an open subset of $\Omega$ and $\mathbbm{1}_{\Sigma}\in L^{\infty}$ is such that $\mathbbm{1}_{\Sigma}>0$ on $\Sigma.$ We recall that he function $v\in L^2((0,T)\times\Sigma)$ is the control function. One may ask the following question: can one choose $v$ to drive the solution from its initial condition $y^0$ to zero in final time $T?$ Thanks to the spectral inequality of Theorem \ref{spectineg} one can answer to this null-controllability question in the affirmative.
\begin{theorem}(Null-controllability)
There exists $C>0$ such that for any $y^0\in L^2(\Omega),$ there exists $v\in L^2((0,T)\times\Omega)$ such that the solution to (\ref{evoleq}) vanishes at time $T$ and moreover the control $v$ satisfies the bound $\|v\|_{L^2((0,T)\times\Omega)}\leq C\|y^0\|_{L^2(\Omega)}.$
\end{theorem}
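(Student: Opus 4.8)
The plan is to deduce null-controllability from the spectral inequality of Theorem~\ref{spectineg} via the Lebeau--Robbiano strategy. First I would recall that $(\Pell,D(\Pell))$ generates an analytic $C_0$-semigroup $S(t)$ on $L^2(\Omega)$ (stated above after the well-posedness corollary), with eigenfunctions $(\phi_j)_{j\in\N}$ and eigenvalues $\mu_0\leq\mu_1\leq\cdots\to+\infty$. Replacing $P$ by $\Pell+\lambda\id$ as in the first remark after Theorem~\ref{spectineg}, I may assume $\mu_0>0$. For $\mu>0$ introduce the spectral projector $\Pi_\mu$ onto $\Span\{\phi_j;\ \mu_j\leq\mu\}$; then Theorem~\ref{spectineg} gives $\|\Pi_\mu y\|_{L^2(\Omega)}\leq Ke^{K\mu^{1/4}}\|\Pi_\mu y\|_{L^2(\omega)}$ for all $y$, where $\omega\Subset\Sigma$ is chosen so that $\mathbbm 1_\Sigma\gtrsim 1$ on $\omega$.

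The core is a one-step partial-control lemma: on an interval $(0,\delta)$, starting from $y^0$ one can steer the projected component $\Pi_\mu y(\delta)$ to zero with a control cost controlled by $e^{C\mu^{1/4}}\|y^0\|_{L^2(\Omega)}$, while the orthogonal part only dissipates. The standard way to get this is by an observability/HUM argument on the finite-dimensional space $\Pi_\mu L^2(\Omega)$: the adjoint system $-\partial_t z+\Delta^2 z=0$ with $z(\delta)=\Pi_\mu z(\delta)$ satisfies, using the spectral inequality applied at each time and integrating together with the dissipation estimate $\|z(t)\|\geq e^{-\mu(\delta-t)}\|z(\delta)\|$ on the low modes, an observability inequality $\|z(0)\|_{L^2(\Omega)}^2\leq C e^{C\mu^{1/4}}\int_0^\delta\int_\omega |z|^2\,dx\,dt$; by duality this yields a control $v$ with $\|v\|_{L^2((0,\delta)\times\Sigma)}\leq Ce^{C\mu^{1/4}}\|y^0\|_{L^2(\Omega)}$ driving $\Pi_\mu y(\delta)=0$, and a dissipation bound $\|(\mathrm{Id}-\Pi_\mu)y(\delta)\|\leq e^{-\mu\delta}\|y^0\|$ since the semigroup contracts on the high modes (here $\mu_0>0$ is used, or a uniform exponential decay after the shift).

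Then I would run the Lebeau--Robbiano iteration. Split $(0,T)$ into intervals $I_k=(a_{k},a_{k+1})$ of lengths $\delta_k\asymp 2^{-k}$ with $\sum\delta_k<T$, set a dyadic threshold $\mu_k=\kappa 2^{k}$ for a suitable $\kappa$; on the first half of each $I_k$ apply the partial-control lemma with parameter $\mu_k$ to kill the modes below $\mu_k$, and on the second half set $v=0$ so the solution dissipates. Writing $z_k=\|y(a_{k+1})\|_{L^2(\Omega)}$, one gets a recursion $z_{k+1}\leq Ce^{C\mu_k^{1/4}}e^{-c\mu_k\delta_k}z_k$ for the low frequencies together with pure decay $e^{-c\mu_k\delta_k}$ on the remaining high frequencies; since $\mu_k^{1/4}=O(2^{k/4})$ is beaten by $\mu_k\delta_k\asymp 2^{k}2^{-k}$… more precisely one chooses $\delta_k$ and $\mu_k$ so that $C\mu_k^{1/4}-c\mu_k\delta_k\to-\infty$ geometrically, which forces $z_k\to0$, hence $y(T)=0$, while the telescoped sum of the control costs $\sum_k\|v\|_{L^2(I_k\times\Sigma)}$ converges and is bounded by $C\|y^0\|_{L^2(\Omega)}$. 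Concatenating the $v$'s (and extending by $\mathbbm 1_\Sigma$) gives the desired $v\in L^2((0,T)\times\Omega)$ with $\|v\|_{L^2((0,T)\times\Omega)}\leq C\|y^0\|_{L^2(\Omega)}$.

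The main obstacle I expect is the bookkeeping in the iteration: one must choose the sequences $\delta_k$ and $\mu_k$ so that the exponential gain from dissipation on the low modes strictly dominates the exponential loss $e^{C\mu_k^{1/4}}$ from the spectral inequality, uniformly, \emph{and} so that $\sum\delta_k<T$ and the control costs remain summable; this is the standard but delicate balancing in the Lebeau--Robbiano method. A secondary technical point is the careful passage from the spectral inequality for a fixed $\mu$ (observation on $\omega$) to the space--time observability of the finite-dimensional adjoint system, and the handling of the shift by $\lambda\id$ when $\mu_0\leq0$, but as noted in the remark after Theorem~\ref{spectineg} this does not affect the $\mu^{1/4}$ rate and hence does not affect the argument.
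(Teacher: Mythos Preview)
Your proposal is correct and follows precisely the Lebeau--Robbiano strategy that the paper invokes: the paper's own proof consists of a single sentence referring the reader to the proof scheme in \cite{JLG} for the heat equation, and your outline is exactly that scheme specialized to the bi-Laplace operator with the $\mu^{1/4}$ rate from Theorem~\ref{spectineg}. Your caveat about the bookkeeping (choosing $\mu_k,\delta_k$ so that the dissipation $e^{-c\mu_k\delta_k}$ beats the control cost $e^{C\mu_k^{1/4}}$ while $\sum\delta_k<T$) is well placed---your tentative choice $\mu_k\asymp 2^k$, $\delta_k\asymp 2^{-k}$ gives $\mu_k\delta_k\asymp 1$ which is not enough, and one should rather take $\mu_k$ growing faster (e.g.\ $\mu_k\asymp 2^{4k}$) so that $\mu_k\delta_k\gg\mu_k^{1/4}$---but you already flag this as the point requiring care.
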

The proof can be adapted in a straightforward manner from the proof scheme in section 6.1 of \cite{JLG} developed for the heat equation.
\addcontentsline{toc}{section}{References}
\bibliography{refs}
\end{document}